\newcommand{\Hom}{\operatorname{Hom}}
\newcommand{\End}{\operatorname{End}}
\newcommand{\Ext}{\operatorname{Ext}}
\newcommand{\ext}{\operatorname{ext}}
\newcommand{\rep}{\operatorname{rep}}
\newcommand{\Proj}{\operatorname{Proj}}
\newcommand{\SI}{\operatorname{SI}}
\newcommand{\SL}{\operatorname{SL}}
\newcommand{\GL}{\operatorname{GL}}
\newcommand{\PGL}{\operatorname{PGL}}
\newcommand{\ZZ}{\mathbb Z}
\newcommand{\RR}{\mathbb R}
\newcommand{\QQ}{\mathbb Q}
\newcommand{\PP}{\mathbb P}
\newcommand{\coker}{\operatorname{coker}}
\newcommand{\Ima}{\operatorname{Im}}
\newcommand{\Id}{\operatorname{Id}}
\newcommand{\Mat}{\operatorname{Mat}}
\newcommand{\rel}{\operatorname{relint}}
\newcommand{\filt}{\operatorname{filt}}
\newcommand{\iso}{\operatorname{iso}}
\newcommand{\ddim}{\operatorname{\mathbf{dim}}}
\newcommand{\dd}{\operatorname{\mathbf{d}}}
\newcommand{\ee}{\operatorname{\mathbf{e}}}
\newcommand{\hh}{\operatorname{\mathbf{h}}}
\newcommand{\M}{\operatorname{\mathcal{M}}}
\newcommand{\Eff}{\operatorname{Eff}}
\newcommand{\module}{\operatorname{mod}}
\newcommand{\trdeg}{\operatorname{tr.deg}}
\newcommand{\rk}{\operatorname{rk}}
\newcommand{\ddeg}{\operatorname{deg}}
\newcommand{\qq}{\operatorname{Quot}}
\newtheorem{theorem}{Theorem}[section]
\newtheorem{proposition}[theorem]{Proposition}
\newtheorem{corollary}[theorem]{Corollary}
\newtheorem{lemma}[theorem]{Lemma}
\theoremstyle{definition}
\newtheorem{definition}[theorem]{Definition}
\newtheorem{remark}[theorem]{Remark}
\gdef\Young(#1){\hbox{$\vcenter
{\mathcode`,="8000\mathcode`|="8000
\def,{\global\advance\cols by 1 &}%
\def|{\cr
      \multispan{\the\cols}\hrulefill\cr
       &\global\cols=2 }%
  \offinterlineskip\everycr{}\tabskip=0pt
  \dimen0=\ht\strutbox \advance\dimen0 by \dp\strutbox
    \halign
    {\vrule height \ht\strutbox depth \dp\strutbox##
      &&\hbox to \dimen0{\hss$##$\hss}\vrule\cr
     \noalign{\hrule}&\global\cols=2 #1\crcr
     \multispan{\the\cols}\hrulefill\cr%
   }
}$}} }
\begin{document}

\pagestyle{plain}

\mbox{}
\title{Geometric characterizations of the representation type of hereditary algebras and of canonical algebras}
\author{Calin Chindris}

\address{University of Missouri, Department of Mathematics, Columbia, MO 65211, USA}
\email[Calin Chindris]{chindrisc@missouri.edu}

\date{September 26, 2010; Revised: \today}

\bibliographystyle{plain}
\subjclass[2000]{Primary 16G20; Secondary 16G10, 16G60, 16R30}
\keywords{Canonical algebras, exceptional sequences, moduli spaces, rational invariants, representation type, semi-invariants, tame algebras}
\maketitle

\begin{abstract} We show that a finite connected quiver $Q$ with no oriented cycles is tame if and only if for each dimension vector $\dd$ and each integral weight $\theta$ of $Q$, the moduli space $\M(Q,\dd)^{ss}_{\theta}$ of $\theta$-semi-stable $\dd$-dimensional representations of $Q$ is just a projective space. In order to prove this, we show that the tame quivers are precisely those whose weight spaces of semi-invariants satisfy a certain log-concavity property. Furthermore, we characterize the tame quivers as being those quivers $Q$ with the property that for each Schur root $\dd$ of $Q$, the field of rational invariants $k(\rep(Q,\dd))^{\GL(\dd)}$ is isomorphic to $k$ or $k(t)$. Next, we extend this latter description to canonical algebras. More precisely, we show that a canonical algebra $\Lambda$ is tame if and only if for each generic root $\dd$ of $\Lambda$ and each indecomposable irreducible component $C$ of $\rep(\Lambda,\dd)$, the field of rational invariants $k(C)^{\GL(\dd)}$ is isomorphic to $k$ or $k(t)$. Along the way, we establish a general reduction technique for studying fields of rational invariants on Schur irreducible components of representation varieties. 
\end{abstract}

\vspace{10pt}

\begin{list}{\arabic{enumi}.}
           {\leftmargin=5ex \rightmargin=2ex \usecounter{enumi}
	    \itemsep=-.35mm \topsep=-1.5mm}
\item
{\sf Introduction}
	\hfill\pageref{intro-sec}
\item
{\sf Quiver invariant theory}
	\hfill\pageref{QIT-sec}
\item
{\sf Proof of Theorem \ref{reptype-quivers-thm}}	
\hfill\pageref{proof-thm-sec}

\item
{\sf Quivers with relations}
	\hfill\pageref{quiverel-sec}
\item
{\sf Exceptional sequences and rational invariants}
         \hfill\pageref{excep-ratio-inv-sec}

\item
{\sf Canonical algebras}
          \hfill\pageref{canonical-sec}

\item
{\sf References}
	\hfill\pageref{biblio-sec}
\end{list} \vspace{1.5mm}

\section{Introduction}\label{intro-sec}

Throughout this paper, we work over an algebraically closed field $k$ of characteristic zero. One of the fundamental problems in the representation theory of algebras is that of classifying the indecomposable representations. The representation type of a finite-dimensional algebra reflects the complexity of its indecomposable representations. An algebra is of \emph{tame representation type} if, for each dimension $d$, all but a finite number of $d$-dimensional indecomposable representations belong to a finite number of $1$-parameter families. Within the class of tame algebras, we distinguish the subclass of algebras of \emph{finite representation type}; these are the algebras with only finitely many indecomposable representations up to isomorphism. An algebra is of \emph{wild representation type} if its representation theory is at least as complicated as that of a free algebra in two variables. The remarkable Tame-Wild Dichotomy Theorem of Y. Drozd \cite{Dro} says that every finite-dimensional algebra is of tame representation type or wild representation type and these types are mutually exclusive. Since the representation theory of a free algebra in two variables is known to be undecidable, one can hope to meaningfully classify the indecomposable representations only for tame algebras. For more precise definitions, see \cite[Section 4.4]{Benson} and the reference therein.

The tame quivers are well understood. P. Gabriel's famous result \cite{Ga} identifies the connected quivers of finite representation type as being those whose underlying graphs are the Dynkin diagrams of types $\mathbb{A}$, $\mathbb{D}$, or $\mathbb{E}$. Later on, L. A. Nazarova \cite{Naz}, and P. Donovan and M. R. Freislich \cite{DF} found the representation-infinite tame connected quivers. Their underlying graphs are the Euclidean diagrams of types $\widetilde{\mathbb{A}}$, $\widetilde{\mathbb{D}}$, or $\widetilde{\mathbb{E}}$.

In this paper, we seek for an interpretation of the representation type of an algebra in terms of its (birational) invariant theory. A first result in this direction was obtained by A. Skowro{\'n}ski and J. Weyman in \cite{SW1} where they showed that a finite-dimensional algebra of global dimension one is tame if and only if all of its algebras of semi-invariants are complete intersections. Unfortunately, this result does not extend to algebras of higher global dimension. In fact, W. Kra{\'s}kiewicz \cite{Kra} found examples of algebras of global dimension two for which \cite[Theorem 1]{SW1} does not hold. As it was suggested by Weyman \cite{Jerzy}, in order to detect the tameness of an algebra, one should impose geometric conditions on the various moduli spaces of semi-stable representations rather than on the entire algebras of semi-invariants.

We begin with the following characterization of the tameness of finite-dimensional path algebras.

\begin{theorem} \label{reptype-quivers-thm} Let $Q$ be a finite, connected quiver without oriented cycles. The following conditions are equivalent:
\begin{enumerate}
\renewcommand{\theenumi}{\arabic{enumi}}
\item the path algebra $kQ$ is tame;
\item for each dimension vector $\dd$ and each integral weight $\theta$ of $Q$ such that $\dd$ is $\theta$-semi-stable, $\M(Q,\dd)^{ss}_{\theta}$ is a projective space;
\item for each dimension vector $\dd$ and each integral weight $\theta$ of $Q$, the sequence $\{\dim_k \SI(Q,\dd)_{N\theta}\}_{N \geq 0}$ is log-concave, i.e., $$\dim_k\SI(Q,\dd)_{(N+1)\theta}\cdot \dim_k\SI(Q,\dd)_{(N-1)\theta} \leq (\dim_k\SI(Q,\dd)_{N\theta})^2, \forall N \geq 1;$$
\item for each Schur root $\dd$ of $Q$, the field of rational invariants $k(\rep(Q,\dd))^{\GL(\dd)}$ is isomorphic to $k$ or $k(t)$.
\end{enumerate}
\end{theorem}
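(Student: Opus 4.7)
My plan proceeds in two main thrusts: the equivalence $(1)\Leftrightarrow(4)$ through rational invariants, and the chain $(1)\Rightarrow(2)\Rightarrow(3)\Rightarrow(1)$ through the birational geometry of moduli spaces. The central input throughout is the Gabriel-Donovan-Freislich-Nazarova classification, which together with Kac's theorem states that $kQ$ is tame if and only if every Schur root of $Q$ is either a real root or the unique isotropic imaginary root $\delta$ (in Euclidean type).

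For $(1)\Leftrightarrow(4)$, I use that for a Schur root $\dd$ the generic $\PGL(\dd)$-stabilizer is trivial, so $\trdeg_k k(\rep(Q,\dd))^{\GL(\dd)} = 1-\langle\dd,\dd\rangle$. When $Q$ is tame, a Schur root is either real (Tits form $1$, transcendence degree $0$, invariant field $k$) or the isotropic $\delta$ (Tits form $0$, transcendence degree $1$; the rational projection from $\rep(Q,\delta)^{ss}$ to the tubular parameter $\PP^1$ identifies the field with $k(t)$). Conversely, if $Q$ is wild, Kac's theorem produces an anisotropic imaginary Schur root with transcendence degree at least $2$, so its invariant field is neither $k$ nor $k(t)$.

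For $(1)\Rightarrow(2)$, I reduce via Schofield's $\theta$-stable decomposition to the case of Schur summands. A real Schur summand contributes a single-point factor (the unique rigid indecomposable), and $\dd=n\delta$ with $\theta(\delta)=0$ is handled by noting that $\theta$-polystable representations of dimension $n\delta$ biject with effective degree-$n$ divisors on the $\PP^1$-family of homogeneous tubes, giving $\operatorname{Sym}^n\PP^1\cong\PP^n$; together these imply that every moduli space for tame $Q$ is a projective space. For $(2)\Rightarrow(3)$ I use the identification
\[
\M(Q,\dd)^{ss}_\theta = \Proj\bigoplus_{N\geq 0}\SI(Q,\dd)_{N\theta},
\]
combined with the fact that the semi-invariant ring is a finitely generated normal graded domain: if its $\Proj$ equals $\PP^n$ then the ring is the section ring of an ample line bundle $\mathcal{O}(a)$ on $\PP^n$, so $\dim_k\SI(Q,\dd)_{N\theta}=\binom{aN+n}{n}$, and a direct comparison of consecutive ratios $a_N/a_{N-1}$ establishes log-concavity in $N$.

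The principal obstacle is $(3)\Rightarrow(1)$. Arguing contrapositively, if $Q$ is wild I must produce $(\dd,\theta)$ with log-concavity failing at some $N\geq 1$. By Kac there is an anisotropic imaginary Schur root $\dd$, and by Derksen-Weyman I can select an integral weight $\theta$ making $\dd$ $\theta$-stable, so $\M(Q,\dd)^{ss}_\theta$ has dimension $1-\langle\dd,\dd\rangle\geq 2$. The key task is to convert this into the explicit arithmetic failure $\dim\SI(Q,\dd)_{(N+1)\theta}\cdot\dim\SI(Q,\dd)_{(N-1)\theta}>(\dim\SI(Q,\dd)_{N\theta})^2$. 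I expect the cleanest path is through a direct Hilbert-series computation on a minimal wild quiver such as an $m$-arrow Kronecker with $m\geq 3$ (where the semi-invariant Poincaré series admits an explicit product expression with rapid, non-binomial growth), followed by a reduction of the general wild case to this minimal case via subquiver inclusions and reflection functors that preserve both log-concavity and wildness. This last transfer step is the most delicate, since log-concavity is not a priori stable under the usual categorical operations.
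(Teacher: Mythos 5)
Your outline of $(1)\Leftrightarrow(4)$ and $(1)\Rightarrow(2)$ tracks the paper's route (transcendence-degree count for Schur roots; $\theta$-stable decomposition reducing to real summands and multiples of $\delta$, with $S^m(\PP^1)\simeq\PP^m$), and those parts are essentially sound. The two remaining implications, however, each contain a genuine gap.

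In $(2)\Rightarrow(3)$, you assert that if $\Proj\bigl(\bigoplus_N\SI(Q,\dd)_{N\theta}\bigr)\simeq\PP^n$ then the ring \emph{is} the section ring of an ample $\mathcal{O}(a)$, hence $\dim_k\SI(Q,\dd)_{N\theta}=\binom{aN+n}{n}$ for all $N$. That identification is only valid in large degrees: a finitely generated graded ring (even a normal one) with $\Proj$ equal to $\PP^n$ need not agree with the full section ring in low degrees, and one may also have to pass to a multiple $l\theta$ to get generation in degree one. Since condition (3) demands log-concavity at \emph{every} $N\geq 1$, the asymptotic binomial formula is not enough. The paper closes exactly this gap with Derksen--Weyman's polynomiality theorem (Proposition \ref{polynomiality-wtspaces-prop}): $\dim_k\SI(Q,\dd)_{n\theta}$ is a polynomial in $n$ with value $1$ at $n=0$, so the binomial formula established for $N\gg0$ in the sublattice $l\ZZ$ interpolates to all $n\geq 0$. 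Without some such input your argument only yields eventual log-concavity.

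The more serious gap is $(3)\Rightarrow(1)$, where you yourself flag the difficulty. An explicit Hilbert-series computation on the $m$-Kronecker quiver followed by a transfer to arbitrary wild quivers via subquiver inclusions and reflection functors is not carried out, and the transfer is exactly where it would break: a wild quiver need not contain a generalized Kronecker as a full subquiver, and while reflection functors do relate semi-invariant rings, verifying that they carry a specific log-concavity failure to a failure for the original $(\dd,\theta)$ is the whole content of the implication. The paper avoids all of this. Starting from a non-isotropic imaginary Schur root $\dd'$, it uses the $\theta$-stable decomposition result that $m\dd'$ remains $\langle\dd'',\cdot\rangle$-stable for every $m\geq1$ (Theorem \ref{stable-decomp-thm}(1)), so $\dim\M(Q,m\dd')^{ss}=1-m^2\langle\dd',\dd'\rangle$ grows quadratically in $m$. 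Combining reciprocity with polynomiality, $(N\dd'')\circ(m\dd')$ is a polynomial in $N$ of degree $1-m^2\langle\dd',\dd'\rangle$; comparing degrees at $m=2$ versus $m=1$ gives $1-4q>2-2q$ for $q=\langle\dd',\dd'\rangle\leq-1$, hence $\dim_k\SI(Q,N\dd'')_{2\theta}>(\dim_k\SI(Q,N\dd'')_{\theta})^2$ for $N\gg0$ with $\theta=-\langle\cdot,\dd'\rangle$ — a log-concavity failure at $N=1$ of the relevant sequence, with no computation on any specific quiver. You should replace your proposed reduction with this degree-comparison argument (or something equivalent); as written, the key step of $(3)\Rightarrow(1)$ is missing.
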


We point out that the implication $(1) \Longrightarrow (2)$ was proved by M. Domokos and H. Lenzing by first studying moduli spaces of regular representations for concealed-canonical algebras (see \cite{DL2}). The other implication $(2) \Longrightarrow (1)$ has been  recently proved by Domokos in \cite{Domo2} using the local quiver technique of J. Adriaenssens and L. Le Bruyn (see \cite{ALeB}). Our proof of $(1) \Longleftrightarrow (2)$ is different from the one in \cite{Domo2, DL2}. More specifically, we work entirely within the category of representations of the quiver in question and use in a fundamental way: $(i)$ the study of the log-concavity property for weight spaces of semi-invariants which, in turn, was motivated by A. Okounkov's log-concavity ex-conjecture (see \cite{CDW}); $(ii)$ the H. Derksen and Weyman's notion of $\theta$-stable decomposition for dimension vectors (see \cite{DW2}). Regarding the implication $(1) \Longrightarrow (4)$, we want to point out that a proof can also be obtained from the work of C. Ringel  \cite{R4} on rational invariants for tame quivers, or from the work of A. Schofield \cite{S3} on the birational classification of moduli spaces of representations for quivers. Our proof of $(1) \Longrightarrow (4)$ follows from the general reduction result described in Theorem \ref{rational-inv-quiverel-thm} below (see also Corollary \ref{rational-inv-Euclidean-coro}).

Our next goal in this paper is to extend the equivalence $(1)\Longleftrightarrow (4)$ of Theorem \ref{reptype-quivers-thm} to other classes of algebras. A fundamental role in achieving this goal is played by the following reduction technique. Let $\Lambda$ be the bound quiver algebra of a bound quiver $(Q,R)$ and let $\mathcal{E}=(E_1, \dots, E_t)$ be an orthogonal exceptional sequence of finite-dimensional representations of $\Lambda$. Using the $A_{\infty}$-formalism, one can show that $\mathcal{E}$ gives rise to a triangular algebra $\Lambda_{\mathcal{E}}$ and an equivalence $F_{\mathcal{E}}$ of categories from $\rep(\Lambda_{\mathcal{E}})$ to the subcategory $\filt_{\mathcal{E}}$ of $\rep(\Lambda)$. (The details of our notations can be found in Section \ref{quiverel-sec} and Section \ref{excep-ratio-inv-sec}.) Denote by $Q_{\mathcal{E}}$ the Gabriel quiver of the (smaller) algebra $\Lambda_{\mathcal{E}}$. Consider a dimension vector $\dd'$ of $Q_{\mathcal{E}}$ and set $\dd=\sum_{1 \leq i \leq t}\dd'(i)\ddim E_i$. Now, we can state our next result.

\begin{theorem} \label{rational-inv-quiverel-thm} Keep the same notations as above. Assume that $\rep(\Lambda_{\mathcal{E}},\dd')$ is an irreducible representation variety containing a Schur representation and let $C$ be an irreducible component of $\rep(\Lambda,\dd)$ such that $C \cap \filt_{\mathcal{E}}(\dd) \neq \emptyset$. Then, $k(\rep(\Lambda_{\mathcal{E}},\dd'))^{\GL(\dd')}$ and $k(C)^{\GL(\dd)}$ are isomorphic (as $k$-algebras).
\end{theorem}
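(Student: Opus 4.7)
My plan is to realize a Zariski open subset of $C$ birationally as a fiber bundle over $\rep(\Lambda_{\mathcal{E}}, \dd')$ via a parabolic induction in $\GL(\dd)$, and then apply the standard principle relating fields of rational invariants across such inductions.

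\textbf{Setup and the key identification.} Fix a graded $k$-vector space $V$ of total dimension vector $\dd$ and a reference flag $0 = W_0 \subset W_1 \subset \cdots \subset W_t = V$ with $W_i/W_{i-1}$ of dimension vector $\dd'(i)\ddim E_i$. Let $G = \GL(\dd)$ and let $P \subset G$ be the parabolic subgroup preserving $W_{\bullet}$, with Levi decomposition $P = LU$. Define $Y \subset \rep(\Lambda,\dd)$ to be the subvariety of representations preserving $W_{\bullet}$ whose $i$-th associated graded piece is isomorphic, as a $\Lambda$-module, to $E_i^{\dd'(i)}$; it is $P$-stable and satisfies $G \cdot Y = \filt_{\mathcal{E}}(\dd)$. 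After choosing specific reference representations on each graded piece, let $Y_0 \subset Y$ denote the corresponding base-point fiber. Because each $E_i$ is a brick, the automorphism group of $\bigoplus_i E_i^{\dd'(i)}$ as a $\Lambda$-module is $\GL(\dd')$, acting naturally on $Y_0$, and the $A_{\infty}$-formalism underlying $F_{\mathcal{E}}$ identifies $Y_0$, as a $\GL(\dd')$-variety, with $\rep(\Lambda_{\mathcal{E}}, \dd')$.

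\textbf{Reduction of rational invariants.} The Levi $L$ acts on $Y$ through its quotient $L \twoheadrightarrow \GL(\dd')$, the extra Levi directions being absorbed into the automorphism groups of the individual graded pieces, while $U$ accounts for the change-of-trivialization directions for the flag. Parabolic induction therefore produces a $G$-equivariant dominant rational map
\[
\nu\colon G \times^{\GL(\dd')} \rep(\Lambda_{\mathcal{E}}, \dd') \dashrightarrow C,
\]
and a dimension count, combined with the irreducibility of $\rep(\Lambda_{\mathcal{E}}, \dd')$ and the existence of a Schur representation there (whose image under $F_{\mathcal{E}}$ is Schur as a $\Lambda$-module, hence has $G$-stabilizer equal to $k^{*}$), forces $\nu$ to be birational onto $C$. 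Invoking the standard identity $k(G \times^{H} X)^{G} \cong k(X)^{H}$ for a reductive subgroup $H \subset G$ and an $H$-variety $X$ then yields
\[
k(C)^{\GL(\dd)} \cong k(\rep(\Lambda_{\mathcal{E}}, \dd'))^{\GL(\dd')},
\]
as desired.

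\textbf{Main obstacle.} I expect the crux to lie in making precise, at the level of varieties rather than merely at the level of isomorphism classes, the $\GL(\dd')$-equivariant identification of $Y_0$ with $\rep(\Lambda_{\mathcal{E}}, \dd')$ coming from the $A_{\infty}$-equivalence $F_{\mathcal{E}}$, and in the attendant dimension count forcing $\nu$ to dominate all of $C$. This is precisely where the hypotheses are used: irreducibility of $\rep(\Lambda_{\mathcal{E}}, \dd')$ guarantees that $Y_0$ is irreducible of the expected dimension, while the Schur assumption ensures that the generic stabilizers on both sides collapse to the scalar center $k^{*}$, so the parabolic-induction identity applies cleanly and no stratum of $C$ escapes the image of $\nu$.
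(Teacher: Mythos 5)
Your overall architecture is the same as the paper's: both arguments exhibit $C$ birationally as a homogeneous fiber space $G\ast_{H}\rep(\Lambda_{\mathcal{E}},\dd')$ (with $G=\GL(\dd)$ or $\PGL(\dd)$, $H=\GL(\dd')$ or $\PGL(\dd')$) and then invoke the identity $k(G\ast_{H}X)^{G}\simeq k(X)^{H}$, which the paper takes from \cite[Lemma 6.1]{ReiVon1}. The divergence, and the gap, lies entirely in how the equivariant birational model is produced. The paper does \emph{not} use a parabolic induction from flag-preserving representations; it uses Watts' theorem to write $F_{\mathcal{E}}=P\otimes_{\Lambda_{\mathcal{E}}}\underline{\phantom{X}}$ for an explicit bimodule $P$, which yields a regular $\GL(\dd')$-equivariant morphism $f_{\mathcal{E}}:\rep(\Lambda_{\mathcal{E}},\dd')\to\rep(\Lambda,\dd)$ going in the direction \emph{opposite} to your fibration (Proposition \ref{morphisms-prop}). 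Dominance of $(\overline{g},M')\mapsto g\cdot f_{\mathcal{E}}(M')$ onto $C$ then follows from the openness of $\filt_{\mathcal{E}}(\dd)$ (Theorem \ref{A-infinity-functor-open-thm}(2), due to Crawley-Boevey and Schr\"oer), not from a dimension count; and birationality follows from checking that the fiber through a generic point $(\overline{g},M)$ with $M$ Schur is a single $\PGL(\dd')$-orbit.

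The step you flag as the crux is indeed where your argument breaks as written. The base-point fiber $Y_0$ --- block-upper-triangular $\Lambda$-module structures with fixed diagonal blocks $E_i^{\dd'(i)}$ --- is \emph{not} $\GL(\dd')$-equivariantly isomorphic to $\rep(\Lambda_{\mathcal{E}},\dd')$: already for $t=2$ and $\Lambda$ hereditary, $Y_0$ is the full off-diagonal space $\bigoplus_{a}\Hom_k\bigl(E_2(ta)^{\dd'(2)},E_1(ha)^{\dd'(1)}\bigr)$, whose dimension exceeds $\dd'(1)\dd'(2)\dim_k\Ext^1_{\Lambda}(E_2,E_1)=\dim\rep(\Lambda_{\mathcal{E}},\dd')$ by exactly $\dim U$. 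So the identification can only hold after quotienting by $U$, and for $t\geq 3$ identifying $Y_0/U$ with $\rep(\Lambda_{\mathcal{E}},\dd')$ (rather than with some other Maurer--Cartan locus) is precisely the nontrivial $A_{\infty}$-statement: the higher products are what cut out the relations $\Ima(m^*)$ defining $\Lambda_{\mathcal{E}}$. Your appeal to "the $A_{\infty}$-formalism underlying $F_{\mathcal{E}}$" therefore assumes the hardest point rather than proving it. A second, smaller issue: a dimension count can at best make $\nu$ generically finite; to get birationality you must show the generic fiber is a single $H$-orbit, which is the Schur-stabilizer computation the paper carries out explicitly. To repair your route you would either have to prove the variety-level statement about $Y_0/U$ directly, or replace the parabolic construction by the bimodule morphism $f_{\mathcal{E}}$ as in the paper.
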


Next, we focus on canonical algebras which were discovered and studied by Ringel \cite{R3}. They form a distinguished class of algebras of global dimension two and play an important role in the representation theory of algebras. Moreover, W. Geigle and Lenzing found in \cite{GeiLen} a beautiful interpretation of canonical algebras and their representations in terms of coherent sheaves over weighted projective lines. The invariant theory for canonical algebras in the \emph{regular} case has been investigated in a number of papers, see \cite{Bob1}, \cite{Bob2}, \cite{DL}, \cite{DL2}, \cite{SW2}. By applying Theorem \ref{rational-inv-quiverel-thm} to tame canonical algebras, we are able to describe the fields of rational invariants when the dimension vector in question is not necessarily regular. More precisely, we have:

\begin{theorem} \label{reptype-canonical-thm} Let $\Lambda$ be a canonical algebra. The following conditions are equivalent:
\begin{enumerate}
\renewcommand{\theenumi}{\arabic{enumi}}
\item $\Lambda$ is tame;
\item for each generic root $\dd$ of $\Lambda$ and each indecomposable irreducible component $C$ of $\rep(\Lambda,\dd)$, $k(C)^{\GL(\dd)} \simeq k$ or $k(t)$.
\end{enumerate}
\end{theorem}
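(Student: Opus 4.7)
The plan is to establish $(1) \Rightarrow (2)$ by an orthogonal-exceptional-sequence reduction to the tame hereditary case via Theorem \ref{rational-inv-quiverel-thm} and Theorem \ref{reptype-quivers-thm}, and to establish $(2) \Rightarrow (1)$ contrapositively by exhibiting, whenever $\Lambda$ is wild, an indecomposable irreducible component whose field of rational invariants has transcendence degree at least two.

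For $(1) \Rightarrow (2)$, I assume $\Lambda$ is a tame canonical algebra; by the Geigle--Lenzing correspondence this corresponds to a weighted projective line $\mathbb{X}$ of genus at most one (domestic or tubular type). Fix a generic root $\dd$ of $\Lambda$ and an indecomposable irreducible component $C \subseteq \rep(\Lambda,\dd)$. The crux is to produce an orthogonal exceptional sequence $\mathcal{E} = (E_1,\ldots,E_t)$ of $\Lambda$-modules such that $(a)$ $C \cap \filt_{\mathcal{E}}(\dd) \neq \emptyset$, so that a generic $V \in C$ admits a filtration whose factors come from $\mathcal{E}$; $(b)$ the Gabriel quiver $Q_{\mathcal{E}}$ of the reduced algebra $\Lambda_{\mathcal{E}}$ is connected and tame (Dynkin or Euclidean), so in particular $\Lambda_{\mathcal{E}} = kQ_{\mathcal{E}}$ is hereditary and $\rep(Q_{\mathcal{E}},\dd')$ is automatically an irreducible affine space; and $(c)$ the induced dimension vector $\dd'$ is a Schur root of $Q_{\mathcal{E}}$. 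Such a sequence is produced using the fact that the perpendicular category of a sufficiently long orthogonal exceptional sequence of coherent sheaves on $\mathbb{X}$ (genus $\leq 1$) is equivalent to the module category of a tame hereditary algebra; in practice one mutates the canonical tilting sequence of line bundles on $\mathbb{X}$, the precise mutation depending on whether a generic $V$ lies in the preprojective, regular (tubular), or preinjective part of $\rep(\Lambda)$. With $\mathcal{E}$ in place, Theorem \ref{rational-inv-quiverel-thm} produces an isomorphism
\[
k(C)^{\GL(\dd)} \;\simeq\; k(\rep(Q_{\mathcal{E}},\dd'))^{\GL(\dd')},
\]
and the equivalence $(1) \Longleftrightarrow (4)$ of Theorem \ref{reptype-quivers-thm}, applied to the tame quiver $Q_{\mathcal{E}}$ with Schur root $\dd'$, forces the right-hand side to be $k$ or $k(t)$, which is $(2)$.

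For $(2) \Rightarrow (1)$, assume $\Lambda$ is a wild canonical algebra. The Euler form of $\Lambda$ takes values $\leq -1$ on a cone of sincere positive vectors, and one can exhibit a sincere generic root $\dd$ with $\langle \dd,\dd \rangle \leq -1$ together with an indecomposable irreducible component $C \subseteq \rep(\Lambda,\dd)$ whose generic element is a brick. Since the generic $\GL(\dd)$-orbit in $C$ then has dimension $\dim \GL(\dd) - 1$, Rosenlicht's theorem yields
\[
\trdeg_k k(C)^{\GL(\dd)} \;=\; \dim C - \dim \GL(\dd) + 1 \;\geq\; 1 - \langle \dd,\dd\rangle \;\geq\; 2,
\]
so $k(C)^{\GL(\dd)}$ is neither $k$ nor $k(t)$, contradicting $(2)$.

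The main obstacle is the construction in $(1) \Rightarrow (2)$: the sequence $\mathcal{E}$ has to simultaneously filter a generic $V \in C$, yield a tame (not wild) reduced quiver $Q_{\mathcal{E}}$, and make $\dd'$ a Schur root. Verifying $C \cap \filt_{\mathcal{E}}(\dd) \neq \emptyset$ in each of the preprojective, tubular-regular, and preinjective cases is where the bulk of the work sits, and it relies on translating $\filt_{\mathcal{E}}$ back through the functor $F_{\mathcal{E}}$ of Theorem \ref{rational-inv-quiverel-thm} into the sheaf-theoretic description of $\rep(\Lambda)$. The regular case is the most delicate, since the reduction must preserve the one-parameter family of isoclasses indexing the tube in which $V$ sits; without this, the value $k(t)$ could not actually be realized on the right-hand side of the reduction isomorphism.
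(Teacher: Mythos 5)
Your overall architecture for $(1)\Rightarrow(2)$ matches the paper's — reduce along an orthogonal exceptional sequence via Theorem \ref{rational-inv-quiverel-thm} and then quote Theorem \ref{reptype-quivers-thm} — but the proposal defers exactly the step that constitutes the mathematical content of this direction. You acknowledge that "the bulk of the work sits" in producing $\mathcal{E}$ with $C\cap\filt_{\mathcal{E}}(\dd)\neq\emptyset$, a tame $Q_{\mathcal{E}}$, and $\dd'$ a Schur root, but you only gesture at mutating the canonical tilting sequence of line bundles; none of the three conditions is verified in any of your three cases. The paper's construction (Proposition \ref{exceptional-seq-tubular-prop}) is concrete and quite different in flavor: only the isotropic case needs a reduction (for $q_{\Lambda}(\dd)=1$ the unique indecomposable component is an orbit closure by Theorem \ref{BS-GS-irr-comp-thm}, so the field is $k$ outright), and there one picks a facet $\mathcal F$ of the cone $\Eff(\Lambda,\dd)$, takes the $\theta_0$-stable decomposition for $\theta_0\in\rel\mathcal F$, and shows by a case analysis on $q_{\Lambda}(\dd_1),q_{\Lambda}(\dd_2)$ and the Riemann--Roch identities that the decomposition is $\dd=\dd_1\pp\dd_2$ with both summands real and $-\langle\dd_1,\dd_2\rangle_{\Lambda}-\langle\dd_2,\dd_1\rangle_{\Lambda}=2$, so that $\Lambda_{\mathcal E}$ is forced to be the Kronecker quiver with $\dd'=(1,1)$; the nonemptiness of $C\cap\filt_{\mathcal E}(\dd)$ and the Schur hypothesis of Theorem \ref{rational-inv-quiverel-thm} come for free from the stability setup (Lemma \ref{lemma-dim-Eff}). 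Without some version of this argument your proof of $(1)\Rightarrow(2)$ is a plan, not a proof.

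For $(2)\Rightarrow(1)$ you take a genuinely different route, and it has a gap of its own: you need, for every wild canonical algebra, a sincere generic root $\dd$ with $\langle\dd,\dd\rangle_{\Lambda}\leq-1$ whose indecomposable component has a brick as generic element. Note that $q_{\Lambda}(\hh)=0$ for every canonical algebra, so wildness does not hand you such a root; producing one (and verifying the brick condition on the component) is comparable in difficulty to the tameness criterion you are trying to prove. The paper sidesteps this entirely via Proposition \ref{qt-ssc-prop}: hypothesis $(2)$ gives $\trdeg_k k(C)^{\GL(\dd)}\leq 1$ on indecomposable components, hence $\dim\GL(\dd)-\dim C\geq 0$ there, hence on all components by Lemma \ref{generic-decomp-ineq}, hence $q_{\Lambda}\geq 0$ everywhere by inequality (\ref{Tits-form-ineq}), and then tameness follows from Theorem \ref{q-tilted-and-strongly-s-c-Tits-thm} since canonical algebras are quasi-tilted. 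If you want to keep your contrapositive formulation, you must either supply the explicit negative Schur root with a Schur component for each wild weight type, or fall back on the Tits-form criterion as the paper does.
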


Note that the condition on the fields of rational invariants in Theorem \ref{reptype-quivers-thm}{(4)} and Theorem \ref{reptype-canonical-thm}{(2)} simply says that the rational quotients $\rep(Q,\dd)/\GL(\dd)$ and  $C/\GL(\dd)$ are (birationally equivalent to) a point or $\PP^1$ whenever $\dd$ is a generic root, and this is very much in sync with the philosophy behind the tameness of an algebra.

In \cite{Kac2},  V. Kac showed that the problem of computing fields of rational invariants for quivers can be reduced to the case where the dimension vectors involved are Schur roots. In Proposition \ref{rational-inv-generic-decomp-prop}, we explain how to extend this result to fields of rational invariants for arbitrary finite-dimensional algebras. As a direct consequence of Theorem \ref{reptype-quivers-thm}, Theorem \ref{reptype-canonical-thm}, and Proposition \ref{rational-inv-generic-decomp-prop}, we have:

\begin{proposition} \label{ratio-inv-tame-can-prop} Let $\Lambda$ be either a tame path algebra or a tame canonical algebra. If $\dd$ is a dimension vector of $\Lambda$ and $C$ is an irreducible component of $\rep(\Lambda,\dd)$ then
$$
k(C)^{\GL(\dd)}\simeq k(t_1,\ldots,t_N),
$$
where $N$ is the sum of the multiplicities of the isotropic imaginary roots that occur in the generic decomposition of $\dd$ in $C$. 
\end{proposition}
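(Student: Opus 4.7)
The plan is a three-step reduction via the generic decomposition, the tame dichotomy of Theorems~\ref{reptype-quivers-thm} and~\ref{reptype-canonical-thm}, and the classical identification $\operatorname{Sym}^m \PP^1 \simeq \PP^m$.

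\emph{Step 1 (reduction via the generic decomposition).} Write the generic decomposition of $\dd$ in $C$ as $\dd=\sum_{\alpha} m_\alpha \dd_\alpha$, with the $\dd_\alpha$ pairwise distinct generic roots and $C_\alpha\subseteq \rep(\Lambda,\dd_\alpha)$ the corresponding indecomposable irreducible components, so that a general point of $C$ has the form $\bigoplus_\alpha\bigoplus_{j=1}^{m_\alpha} M_\alpha^{(j)}$ with $M_\alpha^{(j)}\in C_\alpha$. Proposition~\ref{rational-inv-generic-decomp-prop} (the analogue of Kac's reduction for arbitrary finite-dimensional algebras) then produces a birational identification of rational quotients
\[
C/\GL(\dd) \;\sim\; \prod_{\alpha} \operatorname{Sym}^{m_\alpha}\!\bigl(C_\alpha/\GL(\dd_\alpha)\bigr),
\]
and consequently an isomorphism of function fields
\[
k(C)^{\GL(\dd)} \;\simeq\; \bigotimes_{\alpha} k\bigl(\operatorname{Sym}^{m_\alpha}(C_\alpha/\GL(\dd_\alpha))\bigr).
\]

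\emph{Step 2 (apply the tame hypothesis).} Since $\Lambda$ is tame, Theorem~\ref{reptype-quivers-thm}(4) (in the path-algebra case) and Theorem~\ref{reptype-canonical-thm}(2) (in the canonical-algebra case) assert that each factor $k(C_\alpha)^{\GL(\dd_\alpha)}$ is isomorphic to either $k$ or $k(t)$; equivalently, each rational quotient $C_\alpha/\GL(\dd_\alpha)$ is birational to a point or to $\PP^1$. By the root classification for tame algebras (where the Tits/Ringel form satisfies $\langle \dd_\alpha,\dd_\alpha\rangle\in\{0,1\}$ on generic roots), the $\PP^1$ case occurs precisely when $\dd_\alpha$ is isotropic imaginary.

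\emph{Step 3 (assemble via $\operatorname{Sym}^m\PP^1\simeq \PP^m$).} The symmetric power of a point is still a point, so factors with $C_\alpha/\GL(\dd_\alpha)$ trivial contribute no new transcendentals. For each isotropic imaginary $\dd_\alpha$, the classical isomorphism $\operatorname{Sym}^{m_\alpha}\PP^1\simeq \PP^{m_\alpha}$ identifies the corresponding factor with $k(t_1,\ldots,t_{m_\alpha})$. Tensoring these purely transcendental extensions over $k$ yields a purely transcendental extension of degree $N=\sum_{\dd_\alpha\text{ isotropic}} m_\alpha$, proving $k(C)^{\GL(\dd)}\simeq k(t_1,\ldots,t_N)$.

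\emph{Main obstacle.} The technical heart lies in Step~1: one must check that Proposition~\ref{rational-inv-generic-decomp-prop} indeed delivers the displayed symmetric-product decomposition of rational quotients. For quivers this is the content of Kac's theorem~\cite{Kac2}; for bound quivers (in particular canonical algebras) it requires verifying that on a dense open subset of $C$ every representation admits a unique decomposition into summands belonging to the $C_\alpha$, and that collecting summands of equal dimension vector produces a birational map from $\prod_\alpha C_\alpha^{m_\alpha}$ modulo its natural $\prod_\alpha(\GL(\dd_\alpha)^{m_\alpha}\rtimes S_{m_\alpha})$-action onto $C/\GL(\dd)$. Once this foundational reduction is in place, the remaining steps are essentially formal.
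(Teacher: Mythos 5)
Your proposal is correct and follows essentially the same route as the paper: the paper's proof likewise combines Proposition~\ref{rational-inv-generic-decomp-prop} with Theorems~\ref{reptype-quivers-thm} and~\ref{reptype-canonical-thm} and the fact that $S^m(k(t)/k)\simeq k(t_1,\ldots,t_m)$ (i.e.\ $\operatorname{Sym}^m\PP^1\simeq\PP^m$). The only cosmetic caveat is that the tensor product of fields in your Step 1 should be read as its fraction field, exactly as in the statement of Proposition~\ref{rational-inv-generic-decomp-prop}.
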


Let us mention that our approach to proving Proposition \ref{ratio-inv-tame-can-prop} when $\Lambda$ is a tame path algebra gives a short and conceptual proof of Ringel's result in \cite{R4}.

The layout of the paper is as follows. In Section \ref{QIT-sec}, we recall some fundamental results from quiver invariant theory. This includes A. King's construction of moduli spaces of quiver representations, and Derksen-Weyman's results on the $\theta$-stable decomposition for dimension vectors of quivers. The proof of Theorem \ref{reptype-quivers-thm} can be found in Section \ref{proof-thm-sec}. In Section \ref{quiverel-sec}, we review some important results about representation varieties and their irreducible components. We prove Theorem \ref{rational-inv-quiverel-thm} in Section \ref{excep-ratio-inv-sec} where we also review important properties of categories of the form $\filt_{\mathcal E}$ which are due to B. Keller, and W. Crawley-Boevey and J. Schr\"oer. In Section \ref{canonical-sec}, we first review some fundamental results about canonical algebras, including a description of the indecomposable irreducible components for tame canonical algebras due to G. Bobi{\'n}ski and Skowro{\'n}ski, and Ch. Geiss and Schr\"oer. Furthermore, we present a systematic approach to finding short orthogonal exceptional sequences of representations via the study of facets of cones of effective weights for quivers with relations; in particular, this requires an extension of the Derksen-Weyman's notion of $\theta$-stable decomposition to quivers with relations. We prove Theorem \ref{reptype-canonical-thm} and Proposition \ref{ratio-inv-tame-can-prop} at the end of this final section.

\section{Quiver invariant theory} \label{QIT-sec} Let $Q=(Q_0,Q_1,t,h)$ be a finite quiver with vertex set $Q_0$ and arrow set $Q_1$. The two functions $t,h:Q_1 \to Q_0$ assign to each arrow $a \in Q_1$ its tail \emph{ta} and head \emph{ha}, respectively.

A representation $V$ of $Q$ over $k$ is a collection $(V(i),V(a))_{i\in Q_0, a\in Q_1}$ of finite-dimensional $k$-vector spaces $V(i)$, $i \in Q_0$, and $k$-linear maps $V(a) \in \Hom_k(V(ta),V(ha))$, $a \in Q_1$. The dimension vector of a representation $V$ of $Q$ is the function $\ddim V : Q_0 \to \ZZ$ defined by $(\ddim V)(i)=\dim_{k} V(i)$ for $i\in Q_0$. Let $S_i$ be the one-dimensional representation of $Q$ at vertex $i \in Q_0$ and let us denote by $\ee_i$ its dimension vector. By a dimension vector of $Q$, we simply mean a function $\dd \in \ZZ_{\geq 0}^{Q_0}$.

Given two representations $V$ and $W$ of $Q$, we define a morphism $\varphi:V \rightarrow W$ to be a collection $(\varphi(i))_{i \in Q_0}$ of $k$-linear maps with $\varphi(i) \in \Hom_k(V(i), W(i))$ for each $i \in Q_0$, and such that $\varphi(ha)V(a)=W(a)\varphi(ta)$ for each $a \in Q_1$. We denote by $\Hom_Q(V,W)$ the $k$-vector space of all morphisms from $V$ to $W$. Let $V$ and $W$ be two representations of $Q$. We say that $V$ is a subrepresentation of $W$ if $V(i)$ is a subspace of $W(i)$ for each $i \in Q_0$ and $V(a)$ is the restriction of $W(a)$ to $V(ta)$ for each $a \in Q_1$. In this way, we obtain the abelian category $\rep(Q)$ of all quiver representations of $Q$.

Given two quiver representations $V$ and $W$, we have the Ringel's \cite{R} canonical exact sequence:
\begin{equation}\label{can-exact-seq}
0 \rightarrow \Hom_Q(V,W) \rightarrow \bigoplus_{i\in
Q_0}\Hom_{k}(V(i),W(i)){\buildrel
d^V_W\over\longrightarrow}\bigoplus_{a\in
Q_1}\Hom_{k}(V(ta),W(ha)),
\end{equation}
where $d^V_W((\varphi(i)_{i\in Q_0})=(\varphi(ha)V(a)-W(a)\varphi(ta))_{a\in Q_1}$ and $\coker(d^V_W)=\Ext^1_{Q}(V,W).$

The Ringel form of $Q$ is the bilinear form $\langle \cdot,\cdot \rangle_{Q} :\ZZ^{Q_0} \times \ZZ^{Q_0}\to \ZZ$ defined by
\begin{equation}\label{Euler-prod}
\langle\dd,\ee \rangle_{Q} = \sum_{i\in Q_0}
d(i)e(i)-\sum_{a \in Q_1} d(ta)e(ha).
\end{equation}
(When no confusion arises, we drop the subscript $Q$.) It follows
from (\ref{can-exact-seq}) and (\ref{Euler-prod}) that
\begin{equation} \label{Euler-formula}
\langle\ddim V, \ddim W \rangle=
\dim_k\Hom_{Q}(V,W)-\dim_k\Ext^1_{Q}(V,W).
\end{equation}

The Tits form of $Q$ is the integral quadratic form $q_{Q}:\ZZ^{Q_0} \to \ZZ$ defined by $q_{Q}(\dd)=\langle \dd,\dd \rangle$ for $\dd \in \ZZ^{Q_0}$.

\subsection{Semi-invariants of quivers} Let $\dd$ be a dimension vector of $Q$. The representation space of $\dd$-dimensional representations of $Q$ is the affine space
$$\rep(Q,\dd)=\prod_{a\in Q_1}\Mat_{\dd(ha) \times \dd(ta)}(k).$$ The group $\GL(\dd)=\prod_{i\in Q_0}\GL(\dd(i),k)$ acts on $\rep(Q,\dd)$ by simultaneous conjugation, i.e., for $g=(g(i))_{i\in Q_0}\in \GL(\dd)$ and $V=(V(a))_{a \in Q_1} \in \rep(Q,\dd)$, we define $g \cdot V$ by $$(g\cdot V)(a)=g(ha)V(a) g(ta)^{-1}, \forall a \in Q_1.$$ In this way, $\rep(Q,\dd)$ becomes a rational representation of the linearly reductive group $\GL(\dd)$ and the $\GL(\dd)-$orbits in $\rep(Q,\dd)$ are in one-to-one correspondence with the isomorphism classes of the $\dd$-dimensional representations of $Q$.

\textbf{From now on, we assume that $Q$ is a quiver without oriented cycles.} Under this assumption, one can show that there is only one closed $\GL(\dd)-$orbit in $\rep(Q,\dd)$, and hence the invariant ring $\text{I}(Q,\dd):= k[\rep(Q,\dd)]^{\GL(\dd)}$ is exactly the base field $k$.

Now, consider the subgroup $\SL(\dd) \subseteq \GL(\dd)$ defined by
$$
\SL(\dd)=\prod_{i \in Q_0}\SL(\dd(i),k).
$$

Although there are only constant $\GL(\dd)-$invariant polynomial functions on $\rep(Q,\dd)$, the action of $\SL(\dd)$ on
$\rep(Q,\dd)$ provides us with a highly non-trivial ring of semi-invariants. Note that any $\theta \in \ZZ^{Q_0}$ defines a rational character $\chi_{\theta}:\GL(\dd) \to k^*$ by $$\chi_{\theta}((g(i))_{i \in Q_0})=\prod_{i \in Q_0}(\det g(i))^{\theta(i)}.$$ In this way, we can identify $\Gamma=\ZZ ^{Q_0}$ with the group $X^\star(\GL(\dd))$ of rational characters of $\GL(\dd)$, assuming that $\dd$ is a
sincere dimension vector. In general, we have only the natural epimorphism $\Gamma \to X^*(\GL(\dd))$. We also refer to the rational characters of $\GL(\dd)$ as (integral) weights of $Q$.

Let us now consider the ring of semi-invariants $\SI(Q,\dd):= k[\rep(Q,\dd)]^{\SL(\dd)}$. As $\SL(\dd)$ is the commutator subgroup of $\GL(\dd)$ and $\GL(\dd)$ is linearly reductive, we have $$\SI(Q,\dd)=\bigoplus_{\theta \in X^\star(\GL(\dd))}\SI(Q,\dd)_{\theta},
$$
where $$\SI(Q,\dd)_{\theta}=\lbrace f \in k[\rep(Q,\dd)] \mid g f= \theta(g)f \text{~for all~}g \in \GL(\dd)\rbrace$$ is
called the space of semi-invariants of weight $\theta$.

If $\dd \in \Gamma$, we define $\theta=\langle \dd,\cdot \rangle$ by
$$\theta(i)=\langle \dd,\ee_i \rangle, \forall i\in
Q_0.$$ Similarly, one can define the weight $\tau=\langle \cdot,\dd \rangle.$

\subsection{Reciprocity and polynomiality properties} \label{non-log-concave-sec} The following remarkable properties of weight spaces of semi-invariants, due to Derksen and Weyman \cite{DW1,DW3}, play a crucial role in the proof of Theorem \ref{reptype-quivers-thm}.

\begin{proposition}[Reciprocity Property]\cite[Corollary 1]{DW1} \label{reciproc-prop} Let $Q$ be a quiver and let $\dd$ and $\ee$ be two dimension vectors of $Q$. Then
$$\dim_k\SI(Q,\ee)_{\langle \dd, \cdot \rangle} = \dim_k
\SI(Q,\dd)_{-\langle \cdot, \ee \rangle}.$$
\end{proposition}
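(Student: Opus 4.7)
My plan is to deduce the reciprocity from Schofield's determinantal semi-invariants together with the Derksen--Weyman spanning theorem. First, I would fix $V\in\rep(Q,\dd)$ and $W\in\rep(Q,\ee)$ satisfying $\langle\dd,\ee\rangle=0$, and use Ringel's canonical exact sequence (\ref{can-exact-seq}) to present
\[
d^V_W:\bigoplus_{i\in Q_0}\Hom_k(V(i),W(i))\to\bigoplus_{a\in Q_1}\Hom_k(V(ta),W(ha))
\]
as a map between $k$-vector spaces of the same dimension (the difference is precisely $\langle\dd,\ee\rangle=0$). This lets me introduce the polynomial function
\[
c(V,W):=\det d^V_W
\]
on $\rep(Q,\dd)\times\rep(Q,\ee)$.

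A short bookkeeping computation, tracking how the matrix of $d^V_W$ transforms under the commuting $\GL(\dd)$- and $\GL(\ee)$-actions at the heads and tails of the arrows, would then yield
\[
c(g\cdot V,\,h\cdot W)=\chi_{-\langle\cdot,\ee\rangle}(g)\,\chi_{\langle\dd,\cdot\rangle}(h)\,c(V,W)
\]
for all $g\in\GL(\dd)$ and $h\in\GL(\ee)$. Interpreted correctly, $c$ lives in the tensor product $\SI(Q,\dd)_{-\langle\cdot,\ee\rangle}\otimes_k\SI(Q,\ee)_{\langle\dd,\cdot\rangle}$ and induces a bilinear pairing between their duals, whose rank is at most the dimension of either factor.

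Finally, I would invoke the Derksen--Weyman first fundamental theorem for semi-invariants of quivers: every element of $\SI(Q,\ee)_{\langle\dd,\cdot\rangle}$ is a $k$-linear combination of the partial evaluations $c(V,-)$ as $V$ ranges over $\rep(Q,\dd)$, and symmetrically (either directly or by passing to the opposite quiver $Q^{\mathrm{op}}$) the functions $c(-,W)$ span $\SI(Q,\dd)_{-\langle\cdot,\ee\rangle}$. Both linear maps induced by $c$ are therefore surjective, so the rank of the pairing equals each of $\dim_k\SI(Q,\dd)_{-\langle\cdot,\ee\rangle}$ and $\dim_k\SI(Q,\ee)_{\langle\dd,\cdot\rangle}$, giving the claimed equality. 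The substantive obstacle is clearly the spanning theorem itself; once that deep input is granted, reciprocity reduces to the formal observation that one and the same tensor $c$ witnesses both spanning statements.
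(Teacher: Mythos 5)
The paper gives no proof of this proposition---it is quoted verbatim as Corollary~1 of Derksen--Weyman \cite{DW1}---and your argument is precisely the one used there: the determinantal semi-invariant $c=\det d^V_W$ is a single tensor in $\SI(Q,\dd)_{-\langle\cdot,\ee\rangle}\otimes_k\SI(Q,\ee)_{\langle\dd,\cdot\rangle}$ whose partial evaluations span both factors by the spanning theorem, so both dimensions equal its rank. This is correct as written; the case $\langle\dd,\ee\rangle\neq 0$ that you implicitly set aside is trivial, since the central one-parameter tori force both weight spaces to vanish then.
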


For two dimension vectors $\dd$ and $\ee$, we define
$$
\dd \circ \ee=\dim_k \SI(Q,\ee)_{\langle \dd, \cdot \rangle} = \dim_k \SI(Q,\dd)_{-\langle \cdot, \ee \rangle}.
$$

The next result tells us how the dimensions $(N\dd) \circ \ee$ and $\dd \circ (N\ee)$ vary as $N \in \ZZ_{\geq 0}$ varies.

\begin{proposition}\cite[Corollary 1]{DW3}\label{polynomiality-wtspaces-prop} Let $Q$ be a quiver and let $\dd$ and $\ee$ be two dimension vectors of $Q$ such that $\dd \circ \ee \neq 0$. Then there exist polynomials $P, Q \in \QQ[X]$ (both depending on $\dd$ and
$\ee$) with $P(0)=Q(0)=1$, and
$$
(N\dd) \circ \ee=P(N), \forall N \geq 0,
$$
and
$$
\dd \circ (N\ee)=Q(N), \forall N \geq 0.
$$
\end{proposition}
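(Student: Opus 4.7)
The plan is to interpret each quantity $(N\dd) \circ \ee$ as the Hilbert function of a suitable projective moduli space of semistable quiver representations, and then deduce polynomiality from a Riemann-Roch computation combined with a cohomology vanishing statement. Set $\theta := \langle \dd, \cdot \rangle$.

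First I would form the finitely generated graded $k$-algebra
$$
R = \bigoplus_{N \geq 0} \SI(Q,\ee)_{N\theta},
$$
whose degree-zero part is $\text{I}(Q,\ee) = k$ because $Q$ has no oriented cycles. King's GIT construction then identifies $\M := \text{Proj}(R)$ with the moduli space $\M(Q,\ee)^{ss}_\theta$, which carries an ample line bundle $\mathcal{L}$ satisfying $H^0(\M, \mathcal{L}^{\otimes N}) \simeq R_N = \SI(Q,\ee)_{N\theta}$ for every $N \geq 0$. The hypothesis $\dd \circ \ee \neq 0$ guarantees $\M$ is non-empty (and in fact, by the saturation-type behaviour of weight spaces, excludes the degenerate case where $R_0 = k$ but $R_N = 0$ for $N \geq 1$). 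As a GIT quotient of the affine space $\rep(Q,\ee)$ by the reductive group $\GL(\ee)$, the variety $\M$ is a normal, rational, Cohen-Macaulay projective variety with only rational singularities.

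The crucial step is to establish the uniform cohomology vanishing
$$
H^i(\M, \mathcal{L}^{\otimes N}) = 0 \quad \text{for all } i > 0 \text{ and all } N \geq 0.
$$
Granting this, a Riemann-Roch (Snapper-type) argument ensures that $N \mapsto \chi(\M, \mathcal{L}^{\otimes N})$ is a polynomial $P(N) \in \QQ[N]$, and by the vanishing this Euler characteristic equals $\dim_k H^0(\M, \mathcal{L}^{\otimes N}) = (N\dd) \circ \ee$. The normalization $P(0) = \dim_k H^0(\M, \mathcal{O}_\M) = 1$ follows because $\M$ is connected and reduced. For the second assertion, the Reciprocity Property (Proposition~\ref{reciproc-prop}) gives
$$
\dd \circ (N\ee) \;=\; \dim_k \SI(Q,\dd)_{-N\langle \cdot,\ee \rangle},
$$
so the same argument applied with the roles of $\dd$ and $\ee$ interchanged, and with $\langle \dd,\cdot \rangle$ replaced by $-\langle \cdot,\ee \rangle$, produces the polynomial $Q$ with $Q(0) = 1$.

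The main obstacle is precisely the cohomology vanishing for \emph{all} $N \geq 0$. Hilbert-Serre theory only yields polynomiality of $\dim_k R_N$ for $N$ sufficiently large, so genuine geometric input is needed to match the Hilbert function with the Hilbert polynomial on the whole non-negative range, and in particular at $N = 0$ to secure $P(0) = 1$. One natural route combines the rational singularities of $\M$ with a Kawamata-Viehweg-style vanishing theorem for nef and big line bundles; an alternative is to apply Weyman's geometric technique to a desingularization of the $\theta$-semistable locus inside $\rep(Q,\ee)$, obtaining a Koszul-type spectral sequence that computes $H^i(\M, \mathcal{L}^{\otimes N})$ and degenerates in the required range.
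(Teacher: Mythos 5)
The paper does not prove this proposition at all: it is imported verbatim from Derksen--Weyman \cite{DW3} as a black box, so there is no internal proof to compare against. Judged on its own terms, your sketch identifies the right framework (realizing the stretched weight spaces as sections of powers of a line bundle and reducing to Snapper polynomiality via cohomology vanishing), which is indeed the spirit of the argument in \cite{DW3}; but as written it has a genuine gap precisely at the step you flag as ``the main obstacle,'' and the fixes you propose do not close it. First, the identification $\SI(Q,\ee)_{N\theta}\simeq H^0(\M,\mathcal{L}^{\otimes N})$ for \emph{all} $N\geq 0$ is not automatic: the graded ring $R=\bigoplus_N \SI(Q,\ee)_{N\theta}$ need not be generated in degree one, so $\mathcal{O}(N)$ on $\Proj(R)$ need not even be a line bundle, and even after passing to a Veronese subring the natural map $R_N\to H^0(\M,\mathcal{L}^{\otimes N})$ is an isomorphism only for $N\gg 0$. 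Asserting it in every degree (i.e.\ that $R$ is saturated) is essentially equivalent to the statement being proved. Second, Kawamata--Viehweg vanishing controls $H^i(\M, K_{\M}\otimes\mathcal{L})$, not $H^i(\M,\mathcal{L}^{\otimes N})$ for all $N\geq 0$, and in particular says nothing at $N=0$, which is exactly where $P(0)=1$ must be secured. Third, the claim that $\M$ is a \emph{rational} variety is unjustified and is in fact a well-known open problem for wild quivers with divisible dimension vectors --- the very rationality problem discussed in Remark \ref{rationality-rmk} of this paper --- though fortunately rationality is not what your argument actually needs.

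What makes the Derksen--Weyman proof work is that it avoids the possibly singular moduli space altogether: the weight space $\SI(Q,\ee)_{N\theta}$ is realized as $H^0$ of a line bundle on a \emph{smooth} projective auxiliary variety (built from Grassmannians/flag varieties via Schofield's determinantal semi-invariants and the Kempf--Lascoux--Weyman geometric technique), where Bott-type vanishing of the higher cohomology holds uniformly in $N\geq 0$; polynomiality and the normalization $P(0)=\chi(\mathcal{O})=1$ then follow from Snapper's theorem. You mention this route only as an unexecuted alternative in your last sentence. To have a complete proof you would need to carry out that construction (or otherwise prove both the saturation of $R$ and the uniform vanishing on $\M$); the reduction of the second polynomial $Q$ to the first via the Reciprocity Property is fine.
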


\begin{remark} Note that Proposition \ref{polynomiality-wtspaces-prop} immediately implies the fact that weight spaces of semi-invariants of quivers are \emph{asymptotically log-concave} in both arguments (see \cite{CDW}). However, they are not log-concave in general.
\end{remark}

\subsection{Moduli spaces of quiver representations}
In \cite{K}, King constructed, via GIT, moduli spaces of representations for finite-dimensional algebras. Let $\dd$ be a dimension vector of $Q$. Then the one-dimensional torus
$$
T_1=\{(t\Id_{\dd(i)})_{i \in Q_0} \mid t \in k^* \} \subseteq
\GL(\dd)
$$
acts trivially on $\rep(Q,\dd)$, and so there is a well-defined action of $\PGL(\dd):={\GL(\dd)/T_1}$ on $\rep(Q,\dd)$.

\begin{definition} \cite[Definition 2.1]{K} Let $\theta \in \ZZ^{Q_0}$ be an integral weight of $Q$. A representation $V \in \rep(Q,\dd)$ is said to be:
\begin{enumerate}
\renewcommand{\theenumi}{\arabic{enumi}}

\item \emph{$\theta$-semi-stable} if there exists a semi-invariant $f \in \SI(Q,\dd)_{n\theta}$ with $n \geq 1$, such that $f(V)\neq 0$;

\item \emph{$\theta$-stable} if there exists a semi-invariant $f \in \SI(Q,\dd)_{n\theta}$ with $n \geq 1$, such that $f(V)\neq 0$ and, furthermore, the $\GL(\dd)$-action on the principal open subset defined by $f$ is closed and $\dim \GL(\dd)V=\dim \PGL(\dd)$.
\end{enumerate}
\end{definition}

Now, consider the (possibly empty) open subsets
$$\rep(Q,\dd)^{ss}_{\theta}=\{V \in \rep(Q,\dd)\mid V \text{~is~}
\text{$\theta$-semi-stable}\}$$
and $$\rep(Q,\dd)^s_{\theta}=\{V \in \rep(Q,\dd)\mid V \text{~is~}
\text{$\theta$-stable}\}$$
of $\dd$-dimensional $\theta$(-semi)-stable representations.

We say that a dimension vector $\dd$ is \emph{$\theta$(-semi)-stable} if there exists $\theta$(-semi)-stable representation $V \in \rep(Q,\dd)$. A dimension vector $\dd$ is called a \emph{Schur root} if there exists a representation $V \in \rep(Q,\dd)$ such that $\End_{Q}(V)=k$; we call such a representation a \emph{Schur representation}. Note that if $\dd$ is $\theta$-stable for some integral weight $\theta$ then $\dd$ is a Schur root.

The GIT-quotient of $\rep(Q,\dd)^{ss}_{\theta}$ by $\PGL(\dd)$ is
$$
\M(Q,\dd)^{ss}_{\theta}:=\Proj(\bigoplus_{n \geq 0}\SI(Q,\dd)_{n\theta}).
$$
This is an irreducible projective variety whose closed points parameterize the closed $\GL(\dd)$-orbits in $\rep(Q,\dd)^{ss}_{\theta}$.

From geometric invariant theory we also know that $\M(Q,\dd)^{ss}_{\theta}$ contains a (possibly empty) open subset $\M(Q,\dd)^s_{\theta}$ which is a geometric quotient of $\rep(Q,\dd)^s_{\theta}$ by $\PGL(\dd)$.

\begin{remark}\label{stable-dim-rationality-rmk} Let $\dd$ be a $\theta$-stable dimension vector where $\theta \in \ZZ^{Q_0}$. It follows from Rosenlicht's theorem \cite{Ros} that $k(\M(Q,\dd)^{ss}_{\theta}) \simeq k(\rep(Q,\dd))^{\GL(\dd)}$. Also, a simple dimension count shows that $\dim \M(Q,\dd)^{ss}_{\theta}=1-\langle \dd,\dd \rangle$.
\end{remark}

\begin{theorem}\cite{S1}\label{S-Schur-Rational-thm} Let $\dd$ be a dimension vector of $Q$. Then $\dd$ is a Schur root if and only if $\dd$ is $\theta_{\dd}$-stable where $\theta_{\dd}=\langle \dd,\cdot\rangle-\langle \cdot,\dd \rangle$.
\end{theorem}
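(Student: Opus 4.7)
Plan. We prove the two implications separately. For $(\Leftarrow)$, given $V \in \rep(Q,\dd)^s_{\theta_\dd}$, the orbit $\GL(\dd) \cdot V$ is closed of maximal possible dimension $\dim \PGL(\dd) = \dim \GL(\dd) - 1$, so the stabilizer $\Stab_{\GL(\dd)}(V) = \End_Q(V)^{\times}$ is one-dimensional. Since $\End_Q(V)^{\times}$ is Zariski-open in the finite-dimensional algebra $\End_Q(V)$, we obtain $\dim_k \End_Q(V) = 1$, hence $\End_Q(V) = k$ and $\dd$ is a Schur root.

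For $(\Rightarrow)$, let $V$ be a Schur representation of dimension $\dd$. Since $\theta_\dd(\dd) = 0$ automatically, King's numerical criterion reduces the goal to showing $\theta_\dd(\ddim W) < 0$ for every subrepresentation $0 \subsetneq W \subsetneq V$. First, Schurness gives $\Hom_Q(V, W) = 0$ and $\Hom_Q(V/W, V) = 0$: any nonzero map of either type, composed with the inclusion $W \hookrightarrow V$ or the projection $V \twoheadrightarrow V/W$, would be a non-invertible (hence zero, as $\End_Q(V) = k$) endomorphism of $V$, contradicting injectivity (resp. surjectivity).

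Setting $\ee = \ddim W$ and using the Euler identity $\langle \alpha, \beta \rangle = \dim\Hom - \dim\Ext^1$ together with the above vanishings, the target inequality $\theta_\dd(\ee) < 0$ becomes
$$\dim\Hom_Q(W, V) + \dim\Ext^1_Q(V, W) > \dim\Ext^1_Q(W, V).$$
Substituting the long exact sequence of $\Hom_Q(-, V)$ applied to $0 \to W \to V \to V/W \to 0$, and using the Schur identity $\dim\Ext^1_Q(V, V) = 1 - \langle \dd, \dd \rangle$, this is equivalent to
$$\dim\Ext^1_Q(V, W) + \dim\Ext^1_Q(V/W, V) + \langle \dd, \dd \rangle > 0. \qquad (\star)$$
Now take $V$ to be a \emph{generic} Schur representation of dimension $\dd$; its sub-dimension vectors are then precisely the generic sub-dimension vectors of $\dd$, characterized by Schofield as those $\ee$ with $\ext(\ee, \dd - \ee) = 0$. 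Inequality $(\star)$ is then derived from Schofield's formula $\ext(\alpha, \beta) = -\min_{\alpha' \hookrightarrow \alpha} \langle \alpha', \beta \rangle$ combined with the homological constraints enforced by Schurness of $V$.

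The main obstacle is the \emph{strictness} of $(\star)$: the weak inequality follows fairly automatically from the Hom-vanishings and the hereditary nature of $kQ$, but upgrading to strict inequality is where the assumption that $\dd$ itself is a Schur root (rather than merely that $V$ has $\End_Q(V) = k$) must be genuinely invoked. Conceptually, equality in $(\star)$ would force the existence of a non-trivial automorphism of a generic extension built from $W$ and $V/W$, contradicting Schurness of $\dd$. This is essentially the content of Schofield's argument in \cite{S1}.
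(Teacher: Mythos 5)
The paper offers no proof of this statement --- it is quoted directly from Schofield \cite{S1} --- so the only question is whether your argument stands on its own. Your backward implication does: a $\theta_{\dd}$-stable $V$ has stabilizer $\End_Q(V)^{\times}$ of dimension $\dim\GL(\dd)-\dim\PGL(\dd)=1$, and since the units are open in $\End_Q(V)$ this forces $\End_Q(V)=k$. The reductions in the forward direction are also sound as far as they go: the Hom-vanishings $\Hom_Q(V,W)=\Hom_Q(V/W,V)=0$ are correct, and the long-exact-sequence bookkeeping converting King's criterion into the inequality $(\star)$ checks out.

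The gap is that $(\star)$ is never proved. The identity you substitute, $\dim\Hom_Q(W,V)=\dim\Ext^1_Q(V/W,V)+\langle\dd,\dd\rangle+\dim\Ext^1_Q(W,V)$, is an exact consequence of the short exact sequence, so $(\star)$ is merely an algebraic rewriting of the original target $\theta_{\dd}(\ee)<0$; no progress has been made, and in particular the assertion that the weak inequality ``follows fairly automatically'' is unsupported (unwinding $(\star)$ via that same identity just returns the target). The remaining steps are then deferred wholesale: ``derived from Schofield's formula combined with the homological constraints'' names the tools without using them, and the closing concession that the strict inequality ``is essentially the content of Schofield's argument'' amounts to reducing the theorem to itself and citing the source. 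What is actually required here is the substance of \cite[Theorem 6.1]{S1}: the characterization of generic sub-dimension vectors $\ee\hookrightarrow\dd$ by $\ext(\ee,\dd-\ee)=0$, the formula $\ext(\alpha,\beta)=\max\{-\langle\alpha',\beta\rangle\}$ over generic $\alpha'\hookrightarrow\alpha$, and the induction that converts these into $\langle\ee,\dd\rangle-\langle\dd,\ee\rangle>0$ for every proper nonzero $\ee\hookrightarrow\dd$. The suggestion that equality in $(\star)$ would ``force a non-trivial automorphism of a generic extension'' is a plausible heuristic, not an argument. So: correct framing and a correct easy direction, but the hard implication is not proved.
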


\subsection{The $\theta$-stable decomposition for dimension vectors} In this section, the $\theta$-stable decomposition for dimension vectors, due to Derksen and Weyman \cite{DW2}, is reviewed.

Let $Q$ be a quiver and let $\dd$ be a $\theta$-semi-stable dimension vector of $Q$ where $\theta \in \ZZ^{Q_0}$. (Note that in particular this implies $\theta(\dd):=\sum_{i \in Q_0}\theta(i)\dd(i)=0$.) One of the fundamental results about semi-stable representations is the King's \cite{K} numerical criterion for (semi-)stability that says that a representation $V \in \rep(Q,\dd)$ is $\theta$-semi-stable if and only if $\theta(\ddim V') \leq 0$ for all subrepresentations $V'$ of $V$. Furthermore, $V$ is $\theta$-stable if and only if $\theta(\ddim V')<0$ for all proper subrepresentations $V'$ of $V$.

We define $\rep(Q)^{ss}_{\theta}$ to be the full subcategory of $\rep(Q)$ consisting of all $\theta$-semi-stable representations. Similarly, we denote by $\rep(Q)^{s}_{\theta}$ the full subcategory of $\rep(Q)$ consisting of all $\theta$-stable representations. (Of course, the zero representation is always semi-stable but not stable.)

It is easy to see that $\rep(Q)^{ss}_{\theta}$ is a full exact abelian subcategory of $\rep(Q)$ which is closed under extensions and whose simple objects are precisely the $\theta$-stable representations. Moreover, $\rep(Q)^{ss}_{\theta}$ is Artinian and Noetherian, and hence every $\theta$-semi-stable representation has a Jordan-H{\"o}lder filtration in $\rep(Q)^{ss}_{\theta}$.

Following \cite{DW2}, we say that $$\dd=\dd_1 \pp \dd_2 \pp \ldots \pp \dd_l$$ is the \emph{$\theta$-stable decomposition} of $\dd$ if a general representation in $\rep(Q,\dd)$ has a Jordan-H\"older filtration in $\rep(Q)^{ss}_{\theta}$ with factors of dimensions $\dd_1, \ldots ,\dd_l$ in some order.

Recall that a root of $Q$ is just the dimension vector of an indecomposable representation of $Q$. We say that a root $\dd$ is \emph{real} if $\langle \dd, \dd \rangle=1$. If $\langle \dd, \dd \rangle=0$, $\dd$ is said to be an imaginary isotropic root. Finally, we say that $\dd$ is an imaginary but non-isotropic root if $\langle \dd, \dd \rangle<0$.

In what follows, we write $m\cdot\dd$ instead of $\underbrace{\dd \pp \dd \pp \ldots \pp \dd}_{m}$. The projective scheme $\Proj(\bigoplus_{n\geq 0}S^m(\SI(Q,\dd)_{n\theta}))$ is denoted by $S^m(\M(Q,\dd)^{ss}_{\theta})$. (Here, $S^m(\SI(Q,\dd)_{n\theta})$ is the $m^{th}$ symmetric power of $\SI(Q,\dd)_{n\theta}$.) The following theorem of Derksen and Weyman, plays a crucial role in our study.

\begin{theorem} \cite{DW2} \label{stable-decomp-thm}
Let $\dd=\dd_1 \pp \dd_2 \pp \ldots \pp \dd_l$ be the
$\theta$-stable decomposition of $\dd$ and let $m $ be a positive integer.
\begin{enumerate}
\renewcommand{\theenumi}{\arabic{enumi}}
\item The $\theta$-stable decomposition of $m\dd$ is $m\dd=[m\dd_1]\pp
\ldots \pp[m\dd_l]$, where
$$[m\dd_i]=\begin{cases}
m \cdot \dd_i & \text{if $\dd_i$ is real or
isotropic};\\
m\dd_i & \text{otherwise.}
\end{cases}
$$
\item Suppose that $\dd=m_1 \cdot \dd_1 \pp \ldots \pp m_n \cdot \dd_n$ with $m_i$ positive integers and $\dd_i \neq \dd_j$ for $1 \leq i \neq j \leq n$. Then
$$
\SI(Q,\dd)_{m\theta} \simeq \bigotimes_{i=1}^n S^{m_i}(\SI(Q,\dd_i)_{m\theta})
$$
and
$$
\M(Q,\dd)^{ss}_{\theta} \simeq S^{m_1}(\M(Q,\dd_1)^{ss}_{\theta}) \times \ldots \times S^{m_n}(\M(Q,\dd_n)^{ss}_{\theta}).
$$

\end{enumerate}
\end{theorem}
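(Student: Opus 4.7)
The plan is to use the fact that $\rep(Q)^{ss}_\theta$ is an abelian length subcategory of $\rep(Q)$ whose simple objects are the $\theta$-stable representations, together with the upper-semicontinuity on $\rep(Q,m\dd)$ of the multiset of dimensions of the Jordan--H\"older factors in $\rep(Q)^{ss}_\theta$. Once this is in place, part (1) reduces to asking, separately for each summand $\dd_i$, whether the sub-dimension vector $m\dd_i$ is still $\theta$-stable or whether it splits into smaller pieces; part (2) will then follow by combining the resulting polystable description of a generic $\theta$-semi-stable representation with King's GIT construction of the moduli space.

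For part (1), I would fix $i$ and consider three cases. If $\dd_i$ is a real Schur root, then $\dim \M(Q,m\dd_i)^{s}_\theta = 1 - \langle m\dd_i, m\dd_i \rangle = 1 - m^2 < 0$ for $m \geq 2$, so no $\theta$-stable of dimension $m\dd_i$ exists; since $\dd_i$ is the unique $\theta$-stable (up to isomorphism) of its own dimension, one concludes $[m\dd_i] = m \cdot \dd_i$. If $\dd_i$ is isotropic, then by Kac's canonical decomposition $m\dd_i$ decomposes as $m \cdot \dd_i$ and so is not a Schur root for $m \geq 2$; by Theorem \ref{S-Schur-Rational-thm} it cannot be $\theta$-stable for any weight, forcing again $[m\dd_i] = m \cdot \dd_i$. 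If $\dd_i$ is imaginary non-isotropic, I would instead exhibit a $\theta$-stable representation of dimension $m\dd_i$ by taking a sufficiently generic iterated extension among $m$ copies of a fixed $\theta$-stable $V_i \in \rep(Q,\dd_i)^{s}_\theta$: since $\dim \Ext^1_Q(V_i,V_i) = 1 - \langle \dd_i, \dd_i \rangle \geq 2$, a general such extension remains Schur by an open-condition argument, and the strict inequalities $\theta(\ddim V') < 0$ on proper subrepresentations $V' \subsetneq V$ are inherited from those for the $\theta$-stable constituent.

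For part (2), the decomposition $\dd = m_1 \cdot \dd_1 \pp \ldots \pp m_n \cdot \dd_n$ tells us that a general $\theta$-semi-stable representation of dimension $\dd$ is $S$-equivalent to a polystable direct sum $\bigoplus_{i=1}^n V_i$, with each $V_i$ running over $\rep(Q,m_i \dd_i)^{ss}_\theta$ and, in the real/isotropic cases, further splitting as an unordered $m_i$-tuple of points on $\M(Q,\dd_i)^{ss}_\theta$. Since closed $\GL(\dd)$-orbits in $\rep(Q,\dd)^{ss}_\theta$ are exactly the polystable orbits, the direct-sum construction induces a bijection
\[
\prod_{i=1}^n S^{m_i}(\M(Q,\dd_i)^{ss}_\theta) \longrightarrow \M(Q,\dd)^{ss}_\theta
\]
on closed points. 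Lifting this to the level of graded semi-invariant rings via restriction to block-diagonal $\GL(\dd)$-matrices identifies $\SI(Q,\dd)_{m\theta}$ with the corresponding external tensor product $\bigotimes_i S^{m_i}(\SI(Q,\dd_i)_{m\theta})$; taking $\Proj$ yields the claimed factorization of moduli spaces.

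The principal technical obstacle is the imaginary non-isotropic case of part (1): producing a $\theta$-stable representation of dimension $m\dd_i$ without implicitly assuming the statement being proved. I would address this by combining Theorem \ref{S-Schur-Rational-thm} with an inductive extension argument based on the canonical exact sequence (\ref{can-exact-seq}) to verify that $m\dd_i$ is Schur, and then transferring $\theta$-stability via King's numerical criterion from $\dd_i$ to the generic extension.
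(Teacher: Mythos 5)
First, a point of comparison: the paper does not prove this theorem at all. It is imported verbatim from Derksen--Weyman \cite{DW2} (see also Remark \ref{stable-decomp-rmk}(2), which notes that even the moduli-space isomorphism in part (2) is only extracted from the proof of \cite[Theorem 3.20]{DW2}). So there is no in-paper argument to measure your proposal against, and it has to be judged on its own merits.

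Judged that way, your treatment of the imaginary non-isotropic case of part (1) --- which you correctly identify as the crux --- does not work. You propose to exhibit a $\theta$-stable representation of dimension $m\dd_i$ as a ``sufficiently generic iterated extension among $m$ copies of a fixed $\theta$-stable $V_i$.'' But any such iterated extension $W$ contains $V_i$ as a proper nonzero subrepresentation, and $\theta(\ddim V_i)=0$ because $\dd_i$ is $\theta$-semi-stable; by King's criterion $W$ is therefore $\theta$-semi-stable but never $\theta$-stable, no matter how generic the extension classes are. The strict inequalities are precisely what is \emph{not} inherited. The correct route (Schofield, Derksen--Weyman) is quite different: one first shows that $m\dd_i$ is again a Schur root whenever $\langle \dd_i,\dd_i\rangle<0$, and then that a \emph{general} point of $\rep(Q,m\dd_i)$ --- which for $m\geq 2$ is not an iterated self-extension of a single $V_i$ --- is $\theta$-stable, using the theory of general subrepresentations. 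Two further, smaller gaps: in the real and isotropic cases you only rule out $\theta$-stability of $m\dd_i$, which does not yet determine the generic Jordan--H\"older factors of a general representation of dimension $m\dd$ (a direct sum of $m$ general representations of dimension $\dd$ is not a general representation of dimension $m\dd$, so the reduction to ``each summand separately'' needs justification); and in part (2) a bijection on closed points between two projective varieties does not by itself yield an isomorphism, nor does it identify the graded semi-invariant rings --- the ring-level statement requires an actual analysis of the restriction map to the block-diagonal locus, which is where the real work in \cite{DW2} lies.
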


\begin{remark} \label{stable-decomp-rmk} $(1)$ Note that in particular Theorem \ref{stable-decomp-thm}{(1)} says that if $\dd$ is $\theta$-stable and $\langle \dd, \dd \rangle <0$ then $m\dd$ is still $\theta$-stable for all integers $m \geq 1$.\\
$(2)$ Although the above isomorphism between moduli spaces is not explicitly mentioned in \cite[Theorem 3.20]{DW2}, it follows immediately from its proof.
\end{remark}

\section{Proof of Theorem \ref{reptype-quivers-thm}}\label{proof-thm-sec}

First, let us briefly recall the local quiver technique of  Adriaenssens-Le Bruyn \cite{ALeB}. Let $Q$ be a quiver without oriented cycles and let $\dd$ be a $\theta$-semi-stable dimension vector of $Q$ where $\theta \in \ZZ^{Q_0}$. Let $\pi:\rep(Q,\dd)^{ss}_{\theta}\to \M(Q,\dd)^{ss}_{\theta}$ be the quotient map and let $\xi$ be a closed point of $\M(Q,\dd)^{ss}_{\theta}$. Then, the fiber $\pi^{-1}(\xi)$ contains a unique orbit $\GL(\dd)M$ which is closed in $\rep(Q,\dd)^{ss}_{\theta}$. As shown by King in \cite{K}, this is equivalent to saying that $M=\bigoplus_{i=1}^l M_i^{m_i}$ with $M_1,\ldots,M_l$ pairwise non-isomorphic $\theta$-stable representations (call such a representation $M$ $\theta$-polystable). Next, consider the local quiver setup $(Q_{\xi},\dd_{\xi})$ where $Q_{\xi}$ has vertex set $\{1,\ldots, l\}$ and $\dim_k\Ext^{1}_{Q}(M_i,M_j)$ arrows from vertex $i$ to vertex $j$; the dimension vector $\dd_{\xi}$ of $Q_{\xi}$ is defined to be the vector $(m_1,\ldots, m_l)$. It was proved in \cite{ALeB} that $\M(Q,\dd)^{ss}_{\theta}$ is smooth at $\xi$ if and only if the ring of invariants $k[\rep(Q_{\xi},\dd_{\xi})]^{\GL(\dd_{\xi})}$ is a polynomial algebra. If this is the case, we call $(Q_{\xi},\dd_{\xi})$ a coregular quiver setup.

\begin{lemma}\label{moduli-Euclidean-lemma} Let $Q$ be a Euclidean quiver and let $\delta$ be the unique isotropic Schur root of $Q$. If $\theta \in \ZZ^{Q_0}$ is a weight such that $\delta$ is $\theta$-stable then $\M(Q,\delta)^{ss}_{\theta} \simeq \PP^1$.
\end{lemma}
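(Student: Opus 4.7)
\emph{Plan.} My plan is to show that $\M(Q,\delta)^{ss}_{\theta}$ is a smooth, irreducible, projective, rational curve over $k$, and hence isomorphic to $\PP^1$. The argument combines a dimension count, a normality argument for smoothness, and an identification of the function field.

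\emph{Dimension and smoothness.} Since $\delta$ is $\theta$-stable by hypothesis and isotropic, i.e.\ $\langle\delta,\delta\rangle=0$, Remark \ref{stable-dim-rationality-rmk} gives
\[
\dim \M(Q,\delta)^{ss}_{\theta} \;=\; 1-\langle\delta,\delta\rangle \;=\; 1.
\]
Because $\rep(Q,\delta)^{ss}_{\theta}$ is a nonempty open subvariety of the irreducible affine space $\rep(Q,\delta)$, the GIT quotient $\M(Q,\delta)^{ss}_{\theta}$ is an irreducible projective variety of dimension one. Moreover, $\rep(Q,\delta)^{ss}_{\theta}$ is smooth (being open in an affine space), hence normal; and the $\Proj$ of the graded ring of semi-invariants of a normal quasi-affine variety acted on by a reductive group remains normal. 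Thus $\M(Q,\delta)^{ss}_{\theta}$ is a normal projective curve. In dimension one, normality coincides with regularity, so $\M(Q,\delta)^{ss}_{\theta}$ is a smooth, irreducible, projective curve.

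\emph{Rationality and conclusion.} By the first part of Remark \ref{stable-dim-rationality-rmk},
\[
k(\M(Q,\delta)^{ss}_{\theta}) \;\simeq\; k(\rep(Q,\delta))^{\GL(\delta)}.
\]
For a Euclidean quiver $Q$ and its isotropic Schur root $\delta$, this field of rational invariants is isomorphic to $k(t)$, either by Ringel's classical theorem on rational invariants for tame quivers \cite{R4}, or by an explicit construction via the $\PP^1$-parametrization of homogeneous tubes of regular representations of $Q$, which yields a degree-one rational invariant. A smooth, irreducible, projective curve over an algebraically closed field whose function field is purely transcendental of transcendence degree one is isomorphic to $\PP^1$, and the lemma follows.

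\emph{Main obstacle.} The principal delicate point is establishing the rationality claim without circular dependence on the very characterization of tameness to which this lemma is a stepping stone. Either one invokes Ringel's \cite{R4} as a black box, or one supplies a self-contained construction of a degree-one rational invariant from the classical tube structure of a Euclidean quiver, thereby exhibiting an explicit $k$-algebra isomorphism $k(\rep(Q,\delta))^{\GL(\delta)} \simeq k(t)$. An alternative would be to replace the normality argument for smoothness by a case-by-case analysis of the local quivers at each $\theta$-polystable $M = \bigoplus M_i^{m_i}$ with $\delta = \sum m_i \ddim M_i$, but this is strictly more laborious given that the only imaginary Schur roots of $Q$ are multiples of $\delta$.
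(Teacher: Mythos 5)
Your proof is correct, but the smoothness step is handled by a genuinely different (and shorter) mechanism than the paper's. The paper proves smoothness via the Adriaenssens--Le Bruyn local quiver technique: for every $\theta$-polystable $M=\bigoplus M_i^{m_i}$ of dimension $\delta$ it analyzes the local quiver setup $(Q_M,\dd_M)$ case by case --- using that every stable summand has real Schur dimension vector, that the $\theta$-stable decomposition forces the relevant local quivers (and all their proper full subquivers) to be acyclic, and that the one surviving configuration is an oriented cycle with thin sincere dimension vector, whose invariant ring is polynomial --- and concludes coregularity, hence smoothness, at every closed point. You instead observe that $\bigoplus_{n\ge 0}\SI(Q,\delta)_{n\theta}$ is a ring of invariants of the normal domain $k[\rep(Q,\delta)]$ (under $\ker\chi_{\theta}$), hence integrally closed, so that $\M(Q,\delta)^{ss}_{\theta}$ is a normal projective curve and therefore smooth; this is a valid standard GIT fact, it lets the Euclidean hypothesis enter only through the dimension count and the rationality, and it shows more generally that any one-dimensional quiver moduli space is a smooth projective curve. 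What the paper's longer route buys is precisely what the author advertises in the remark following the lemma: an argument carried out entirely inside the representation theory of $Q$, which in addition exhibits the local structure of the moduli space at each polystable point and makes visible where tameness is used. The rationality step is treated the same way in both arguments (quoting Ringel or Schofield as a black box, with the paper's Corollary \ref{rational-inv-Euclidean-coro} available as the non-circular alternative you ask for), and your final identification of a smooth projective rational curve with $\PP^1$ matches the paper's.
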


\begin{remark} This result is well-known. For example, it follows from the work of Domokos and Lenzing on moduli spaces of regular representations for concealed-canonical algebras (see \cite[Corollary 7.3]{DL2}). Here, we give a direct proof working entirely within the category of representations of the tame quiver $Q$. More precisely, we make essential use of: (i) the rationality of $k(\rep(Q,\delta))^{\GL(\delta)}$; (ii) the Derksen-Weyman $\theta$-stable decomposition for dimension vectors; (iii) Adriaenssens-Le Bruyn local quiver technique.
\end{remark}

\begin{proof} First, note that the moduli space $\M(Q,\delta)^{ss}_{\theta}$ is a projective curve since $\langle \delta, \delta \rangle=0$. Moreover, if follows from the work of Ringel \cite{R2} or Schofield \cite{S3}  that $\M(Q,\delta)^{ss}_{\theta}$ is a rational variety. (For another proof of this rationality property, see our Corollary \ref{rational-inv-Euclidean-coro}.) To prove that $\M(Q,\delta)^{ss}_{\theta}$ is precisely $\PP^1$, it remains to show that $\M(Q,\delta)^{ss}_{\theta}$ is smooth (see for example \cite[Exercise I.6.1]{Har}). For this, let $M=\bigoplus_{i=1}^l M_i^{m_i}$ be a $\delta$-dimensional $\theta$-polystable representation with $M_1,\ldots,M_l$ pairwise non-isomorphic $\theta$-stable representations, and let $(Q_M,\dd_M)$ be the corresponding local quiver setup. Of course, if $l=1$ then the ring of invariants $k[\rep(Q_{M},\dd_M)]^{\GL(\dd_M)}$ is just $k[t]$. So, let us assume that $l>1$. Note that since each $\ddim M_i$ is a Schur root smaller (coordinatewise) than $\delta$, $\ddim M_i$ has to be a real Schur root, and hence $M_i$ is an exceptional representation. In particular, $Q_M$ has no loops.

If at least one of the $m_i$, say $m_1$, is bigger than one then we claim that $Q_M$ has no oriented cycles. To see why this is so, consider the $\theta$-polystable representation $M'=M^{m_1-1}\oplus \bigoplus_{i=2}^{l}M_i^{m_i}$ and denote its dimension vector by $\dd'$. Since $\dd'<\delta$, we know that all the Schur roots that occur in the canonical decomposition of $\dd'$ are real, and hence $\GL(\dd')$ acts with a dense orbit on $\rep(Q,\dd')$ (see \cite[Corollary 1]{Kac}). In particular, $M'$ is the unique $\dd'$-dimensional $\theta$-polystable representation, up to isomorphism. Hence, $\dd'=(m_1-1)\cdot \ddim{M_1} \pp \ldots \pp m_l\cdot \ddim{M_l}$ is the $\theta$-stable decomposition of $\dd'$. It now follows from \cite[Proposition 3.18(d)]{DW2} that $Q_M$ has no oriented cycles, and so $k[\rep(Q_M,\dd_M)]^{\GL(\dd_M)}=k$.

Next, let $I \subset \{1,\dots, l\}$ be a proper subset. Denote by $\dd_I=\sum_{i \in I}m_i\ddim M_i$ and note that $\GL(\dd_I)$ acts with a dense orbit on $\rep(Q,\dd_I)$ as $\dd_I<\delta$. Arguing as before we deduce that the (full) subquiver of $Q_M$ with vertex set $I$ has no oriented cycles.

From the discussion above, it remains to look into the case when $Q_M$ has oriented cycles,  $m_i=1$ for all $1 \leq i \leq l$,  and any oriented cycle in $Q_M$ uses all the vertices. So, we can reorder the vertices of $Q_M$, if needed, so that if $k_{i,j}$ denotes the number of arrows from $i$ to $j$, $1 \leq i,j\leq l$, then $k_{1,2}\cdot k_{2,3}\cdot \ldots \cdot k_{l-1,l}\cdot k_{l,1}\neq 0$, and the rest of the $k_{i,j}$ are zero. Furthermore, we have that $k_{1,2}+\ldots+k_{l,1}=l$ as $\langle \delta,\delta \rangle=0$; in other words, $Q_M$ is just the oriented $l$-cycle. But for this quiver and the thin sincere dimension vector $\dd_M$, the corresponding ring of invariants is known to be a polynomial algebra (see for example \cite{Bock}).

Finally, using the local quiver technique and the fact that all local quiver setups associated to $\M(Q,\delta)^{ss}_{\theta}$ are coregular, we conclude that $\M(Q,\delta)^{ss}_{\theta}$ is smooth. This finishes the proof.
\end{proof}

Now, we are ready to prove Theorem \ref{reptype-quivers-thm}.

\begin{proof}[Proof of Theorem \ref{reptype-quivers-thm}] First, let us prove the implication $(1) \Longrightarrow (2)$. If $Q$ is a Dynkin quiver, $\M(Q,\dd)^{ss}_{\theta}$ is just a point as $\GL(\dd)$ acts with a dense orbit on $\rep(Q,\dd)$. Next, let assume that $Q$ is a Euclidean quiver and let $\dd$ be a $\theta$-semi-stable dimension vector where $\theta \in \ZZ^{Q_0}$. If the isotropic Schur root $\delta$ of $Q$ does not occur in the $\theta$-stable decomposition of $\dd$ then Theorem \ref{stable-decomp-thm}{(2)} tells us that $\M(Q,\dd)^{ss}_{\theta}$ is just a point. Otherwise, let $m$ be the multiplicity of $\delta$ in the $\theta$-stable decomposition of $\dd$. It follows again from Theorem \ref{stable-decomp-thm}{(2)} that
$$
\M(Q,\dd)^{ss}_{\theta} \simeq S^m(\M(Q,\delta)^{ss}_{\theta}).
$$
Furthermore,  $\M(Q,\delta)^{ss}_{\theta}\simeq \PP^1$ by Lemma \ref{moduli-Euclidean-lemma}, and so we deduce that $\M(Q,\dd)^{ss}_{\theta}\simeq \PP^m$.

Next, we prove the implication $(2)\Longrightarrow (3)$. Let $\dd$ be a $\theta$-semi-stable dimension vector where $\theta$ is an integral weight. Then, we know that $\M(Q,\dd)^{ss}_{\theta} \simeq \PP^m \hookrightarrow \PP^r$. Choose an integer $l \geq 1$ for which the graded algebra $\bigoplus_{n \geq 0}\SI(Q,\dd)_{n(l\theta)}$ is generated by the degree one component $\SI(Q,\dd)_{l\theta}$. Pulling back the line bundle $\mathcal{O}(1)$ over $\PP^r$, we get a line bundle over $\M(Q,\dd)^{ss}_{\theta}$ which has to be of the form $\mathcal{O}(d)$. The image of the map $\Gamma(\PP^r,\mathcal{O}(N)) \to \Gamma(\PP^m,\mathcal{O}(Nd))$ is precisely $\SI(Q,\dd)_{N(l\theta)}$, and furthermore this map is surjective for sufficiently large values of $N$ (see for example \cite[Exercise II.5.9]{Har}). Hence,
$$
\dim_k \SI(Q,\dd)_{N(l\theta)}=\binom{Nd+m}{m}
$$
for sufficiently large values of $N$. From Proposition \ref{polynomiality-wtspaces-prop}, we deduce that
$$
\dim_k \SI(Q,\dd)_{n\theta}=\binom{qn+m}{m},
$$
for all integers $n \geq 0$ where $q=\frac{d}{l}$. This clearly shows that the weight spaces of semi-invariants of $Q$ are log-concave.

To prove that $(3) \Longrightarrow (1)$, we follow the arguments in \cite[Sec. 3.4]{CDW}. Let us assume to the contrary that $Q$ is a wild quiver. Under this assumption, we can always find a non-isotropic imaginary Schur root $\dd'$. Then $\dd'$ is stable with respect to the weight $\theta_{\dd'}=\langle \dd', \cdot \rangle-\langle \cdot, \dd' \rangle$ by Theorem \ref{S-Schur-Rational-thm}. We can clearly assume that $\dd'$ is sincere since otherwise we can just simply work with the full subquiver of $Q$ whose vertex set consists of those vertices $i \in Q_0$ for which $\dd'(i)>0$. So, we can write $\theta_{\dd'}=\langle \dd'',\cdot \rangle$ for a unique dimension vector $\dd''$ of $Q$ due to \cite[Lemma 6.5.7]{IOTW} (see also \cite[Theorem 1]{DW1}).

By Remark \ref{stable-decomp-rmk}, we know that $m\dd'$ is still $\langle \dd'', \cdot \rangle$-stable for any integer $m \geq 1$. Consequently, the dimension of the moduli space $\M(Q,m\dd')^{ss}_{\langle \dd'', \cdot \rangle}$ is $1-m^2\langle \dd',\dd'\rangle$. So, for any integer $m \geq 1$, the Hilbert function $(N\dd'') \circ (m\dd')$ is a polynomial in $N$ of degree $1-m^2 \langle \dd', \dd' \rangle$. As $-\langle \dd', \dd' \rangle \geq 1$, we have that for sufficiently large $N$,
$$
(N \dd'') \circ (2 \dd')>((N \dd'') \circ \dd')^2,
$$
which is equivalent to
$$
\dim_k \SI(Q,\dd)_{2\theta}>(\dim_k \SI(Q,\dd)_{\theta})^2,
$$
where $\dd=N \dd''$ and $\theta=-\langle \cdot,\dd' \rangle$. But this is a contradiction. So, $Q$ must be a tame quiver.

Finally, it remains to prove the equivalence $(1) \Longleftrightarrow (4)$. First, let us assume that for each Schur root $\dd$, the field of rational invariants $k(\rep(Q,\dd))^{\GL(\dd)}$ is isomorphic to $k$ or $k(t)$. For any dimension vector $\dd$ of $Q$, we have $$\trdeg_{k} k(\rep(Q,\dd))^{\GL(\dd)}=\dim \rep(Q,\dd)- \dim \GL(\dd)+\min \{\dim_k \End_Q(V) \mid V \in \rep(Q,\dd) \}.$$
(This formula follows immediately from Rosenlicht's theorem \cite[Theorem 2]{Ros} and the fiber dimension theorem \cite{Sha}.) So, if $\dd$ is a Schur root of $Q$ then it is easy to see that $q_{Q}(\dd) \geq 0$. Now, let $\dd$ be a dimension vector of $Q$ and consider its canonical decomposition
$$
\dd=\dd_1 \oplus \ldots \oplus \dd_m,
$$
where $\dd_i$, $1 \leq i \leq m$, are Schur roots and $\ext^1_Q(\dd_i,\dd_j)=0$, $\forall 1 \leq i \neq j \leq m$. In particular, we have $q_{Q}(\dd) \geq \sum _{1 \leq i \leq m} q_{Q}(\dd_i) \geq 0$. This implies that $Q$ is a Dynkin or Euclidean quiver.

The implication $(1) \Longrightarrow (4)$ follows from Corollary \ref{rational-inv-Euclidean-coro} or \cite{R4}.
\end{proof}

\section{Quivers with relations} \label{quiverel-sec}
Given a quiver $Q$, its path algebra $kQ$ has a $k$-basis consisting of all paths and the multiplication in $kQ$ is given by concatenation of paths. It is easy to see that any finite-dimensional left $kQ$-module defines a representation of $Q$, and vice-versa. Furthermore, the category $\module(kQ)$ of finite-dimensional left $kQ$-modules is equivalent to the category $\rep(Q)$. In what follows, we identify $\module(kQ)$ and $\rep(Q)$, and use the same notation for a module and the corresponding representation.

A \emph{relation} in $Q$ with coefficients in $k$ is an element $r \in kQ$ of the form
$$
r=\sum_{i=1}^l \lambda_i p_i,
$$
where $\lambda_1, \ldots, \lambda_l \in k$ are non-zero scalars and $p_1, \ldots, p_l$ are paths in $kQ$ of length at least two with $tp_1=\dots=tp_l$ and $hp_1=\dots=h p_l$.

A set $R$ of relations is said to be \emph{minimal} if for every $r \in R$, $r$ does not belong to the two-sided ideal $\langle R\setminus \{r\} \rangle$ of $kQ$ generated by $R\setminus \{r\} $. A bound quiver consists of a quiver $Q$ and a minimal finite set $R$ of relations such that there exists a positive integer $L$ with the property that any path in $Q$ of length at least $L$ belongs to the two sided ideal $\langle R \rangle$ of $kQ$ generated by $R$. We call $kQ/\langle R \rangle$ the \emph{bound quiver algebra} of the bound quiver $(Q,R)$. A representation $M$ of $kQ/\langle R \rangle$ (or $(Q,R)$) is just a representation $M$ of $Q$ such that $M(r)=0$ for all $r \in R$.

It is well-known that any finite-dimensional basic algebra $\Lambda$ is isomorphic to the bound quiver algebra of a bound quiver $(Q_{\Lambda},R)$, where $Q_{\Lambda}$ is the Gabriel quiver of $\Lambda$. Note that the set of relations $R$ is not uniquely determined by $\Lambda$. We say that $\Lambda$ is a \emph{triangular} algebra if its Gabriel quiver has no oriented cycles.

Fix a bound quiver $(Q,R)$ and let $\Lambda=kQ/\langle R \rangle$ be its bound quiver algebra. The category $\module(\Lambda)$ of finite-dimensional left $\Lambda$-modules is equivalent to the category $\rep(\Lambda)$ of representations of $\Lambda$. As before, we identify $\module(\Lambda)$ and $\rep(\Lambda)$, and make no distinction between $\Lambda$-modules and representations of $\Lambda$. By a $\Lambda$-module, we always mean a finite-dimensional left $\Lambda$-module. For each vertex $v \in Q_0$, we denote by $e_v$ the primitive idempotent corresponding to $v$.

\subsection{Representation varieties and the Tits form}
Let $\dd$ be a dimension vector of $\Lambda$ (or equivalently, of $Q$). The variety of $\dd$-dimensional representations of $\Lambda$ is the affine variety
$$
\rep(\Lambda,\dd)=\{M \in \rep(Q, \dd) \mid M(r)=0, \forall r \in
R\}.
$$
It is clear that $\rep(\Lambda,\dd)$ is a $\GL(\dd)$-invariant closed subset of $\rep(Q,\dd)$. Note that $\rep(\Lambda, \dd)$ does not have to be irreducible. We call $\rep(\Lambda,\dd)$ the \emph{representation/module variety} of $\dd$-dimensional representations/modules of $\Lambda$.

In what follows, we list a series of important results describing the representation type of a (triangular) algebra $\Lambda$ in terms of the so-called Tits form of $\Lambda$.

\begin{proposition}\cite{delaP}\label{delaPena-tame-irr-prop} If  $\Lambda$ is a tame algebra then
$$
\dim \GL(\dd) \geq \dim \rep(\Lambda,\dd),
$$
for each dimension vector $\dd$ of $\Lambda$.
\end{proposition}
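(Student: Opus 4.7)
The plan is to bound $\dim C$ for every irreducible component $C$ of $\rep(\Lambda,\dd)$ by first controlling the locus of indecomposable representations in each dimension vector, and then assembling the general case via the Krull--Schmidt decomposition. Write $\operatorname{ind}(\Lambda,\ee)\subseteq\rep(\Lambda,\ee)$ for the constructible subset consisting of indecomposable representations.

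The first step is the key estimate $\dim\operatorname{ind}(\Lambda,\ee)\leq\dim\GL(\ee)$ for every dimension vector $\ee$. Tameness provides finitely many $(\Lambda,k[t])$-bimodules $N_1,\dots,N_s$, each free of finite rank over $k[t]$, such that apart from finitely many exceptions every indecomposable $\Lambda$-module of total dimension $|\ee|:=\sum_i\ee(i)$ is isomorphic to $N_j\otimes_{k[t]}k[t]/(t-\lambda)$ for some $j$ and $\lambda\in k$. Because the action of each primitive idempotent $e_i$ on $N_j$ is $k[t]$-linear, every member of a single family $\{N_j\otimes_{k[t]}k[t]/(t-\lambda)\}_{\lambda}$ has the same dimension vector, so the iso-classes of $\ee$-dimensional indecomposables form a constructible set of dimension at most $1$. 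Since every indecomposable $M$ has a local endomorphism ring, $\dim\Stab_{\GL(\ee)}(M)=\dim_k\End_\Lambda(M)\geq 1$, so each orbit in $\operatorname{ind}(\Lambda,\ee)$ has dimension at most $\dim\GL(\ee)-1$. Combining these two facts via the fiber dimension theorem applied to the orbit map gives the claim.

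Next, I would stratify $C$ via Krull--Schmidt. Pick a generic $M\in C$ and decompose $M\simeq M_1\oplus\cdots\oplus M_r$ with $M_j$ indecomposable of dimension vector $\dd_j$, so that $\dd_1+\cdots+\dd_r=\dd$. After fixing a $(\dd_1,\dots,\dd_r)$-graded decomposition of $k^{\dd}$, define
\[
\phi\colon\GL(\dd)\times\operatorname{ind}(\Lambda,\dd_1)\times\cdots\times\operatorname{ind}(\Lambda,\dd_r)\longrightarrow\rep(\Lambda,\dd),\quad (g,M_1',\dots,M_r')\mapsto g\cdot(M_1'\oplus\cdots\oplus M_r').
\]
By Krull--Schmidt, the image of $\phi$ contains a dense open subset of $C$, hence $\dim C$ equals the dimension of the source minus the dimension of a generic fiber. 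A direct parameterization of the fiber over $M$, using that any $(M_j')$ in the fiber is (up to permutation) a rearrangement of $(M_1,\dots,M_r)$ by Krull--Schmidt, shows that its dimension equals
\[
\sum_{j=1}^{r}\bigl(\dim\GL(\dd_j)-\dim_k\End_\Lambda(M_j)\bigr)+\dim_k\End_\Lambda(M)=\sum_{j=1}^{r}\dim\GL(\dd_j)+\sum_{j\neq k}\dim_k\Hom_\Lambda(M_j,M_k),
\]
which is at least $\sum_{j}\dim\GL(\dd_j)$. Combined with the key estimate, this yields
\[
\dim C\leq\dim\GL(\dd)+\sum_{j=1}^{r}\dim\operatorname{ind}(\Lambda,\dd_j)-\sum_{j=1}^{r}\dim\GL(\dd_j)\leq\dim\GL(\dd).
\]

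The main obstacle is the generic-fiber dimension count for $\phi$. One must carefully describe the fiber as $\{(g,M_1',\dots,M_r')\colon g\cdot(M_1'\oplus\cdots\oplus M_r')=M\}$, project to the $(M_j')$-coordinates to land in a union (indexed by permutations respecting dimension vectors) of products of $\GL(\dd_j)$-orbits, and then recover $g$ modulo $\Stab_{\GL(\dd)}(M)$. Applying the identity $\End_\Lambda(M)=\bigoplus_{j,k}\Hom_\Lambda(M_j,M_k)$ from Krull--Schmidt gives the fiber dimension above. A secondary, essentially routine point is to check that the tameness hypothesis, which is phrased in terms of the total dimension $|\ee|$, refines to the dimension-vector-wise bound on $\operatorname{ind}(\Lambda,\ee)$; this is automatic because each bimodule family $\{N_j\otimes_{k[t]}k[t]/(t-\lambda)\}_{\lambda}$ has a constant dimension vector.
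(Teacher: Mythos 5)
The paper offers no proof of this proposition --- it is quoted directly from \cite{delaP} --- and your argument is essentially the one given in that reference: the one-parameter families furnished by the definition of tameness bound the indecomposable locus in each dimension vector $\ee$ by $\dim \GL(\ee)$, and the direct-sum stratification together with the fiber-dimension count for $\phi$ propagates the bound to all of $\rep(\Lambda,\dd)$. The proposal is correct; the only points needing care in a complete write-up (the constancy of the dimension vector along each family $N_j\otimes_{k[t]}k[t]/(t-\lambda)$, and the fact that each fiber of $\phi$ is swept out by $\Stab_{\GL(\dd)}(M)\times\prod_j\GL(\dd_j)$ up to the finitely many Krull--Schmidt permutations) are exactly the ones you flag, and both go through as you indicate.
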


Assume form now on that $\Lambda$ has finite global dimension; this happens, for example, when $Q$ has no oriented cycles. The Ringel form of $\Lambda$ is the bilinear form  $\langle \cdot, \cdot \rangle_{\Lambda} : \ZZ^{Q_0}\times \ZZ^{Q_0} \to \ZZ$ defined by
$$
\langle \dd,\ee \rangle_{\Lambda}=\sum_{l\geq 0}(-1)^l \sum_{i,j\in Q_0}\dim_k \Ext^l_{\Lambda}(S_i,S_j)\dd(i)\ee(j).
$$
Note that if $M$ is a $\dd$-dimensional representation of $\Lambda$ and $N$ is an $\ee$-dimensional representation of $\Lambda$ then
$$
\langle \dd,\ee \rangle_{\Lambda}=\sum_{l\geq 0}(-1)^l \dim_k \Ext^l_{\Lambda}(M,N).
$$
The quadratic form induced by $\langle \cdot,\cdot \rangle_{\Lambda}$ is denoted by $\chi_{\Lambda}$.

The \emph{Tits form} of $\Lambda$ is the integral quadratic form $q_{\Lambda}: \ZZ^{Q_0} \to \ZZ$ defined by
$$q_{\Lambda}(\dd):=\sum_{i \in Q_0}\dd^2(i)-\sum_{i,j\in Q_0}\dim_{k}\Ext^1_{\Lambda}(S_i,S_j)\dd(i)\dd(j)+\sum_{i,j\in Q_0}\dim_{k}\Ext^2_{\Lambda}(S_i,S_j)\dd(i)\dd(j).$$

Next, let us assume that $\Lambda$ is triangular. Under this assumption, the Tits form $q_{\Lambda}$ is related to the geometry of the varieties of representations of $\Lambda$ in the following way. First, $r(i,j):=|R \cap e_j\langle R \rangle e_i|$ is precisely $\dim_{k}\Ext^2_{\Lambda}(S_i,S_j), \forall i,j \in Q_0$, as shown by K. Bongartz in \cite{Bon}. So, we can write
$$
q_{\Lambda}(\dd)=\sum_{i \in Q_0}\dd^2(i)-\sum_{a\in Q_1}\dd(ta)\dd(ha)+\sum_{i,j\in Q_0}r(i,j)\dd(i)\dd(j).
$$

Now, let $\dd$ be a dimension vector of $\Lambda$ and $M \in \rep(\Lambda,\dd)$. By Krull's Principal Ideal Theorem (see for example \cite{Eisbook}), we have
$$
\dim_{M}\rep(\Lambda,\dd)\geq \sum_{a \in Q_1}\dd(ta)\dd(ha)-\sum_{i,j\in Q_0}r(i,j)\dd(i)\dd(j).
$$
In particular, we have that
\begin{equation}\label{Tits-form-ineq}
q_{\Lambda}(\dd)\geq \dim \GL(\dd)-\dim \rep(\Lambda,\dd).
\end{equation}

This inequality together with Proposition \ref{delaPena-tame-irr-prop} proves:

\begin{theorem} \cite{delaP} If $\Lambda$ is a tame triangular algebra then $q_{\Lambda}(\dd)\geq 0$ for each dimension vector $\dd$ of $\Lambda$.
\end{theorem}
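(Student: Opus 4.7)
The proof is essentially a one-line combination of the two facts that the author has just carefully set up, so my plan would simply be to glue them together and explain why no further hypothesis is needed.

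The plan is to apply Proposition \ref{delaPena-tame-irr-prop} to the assumption that $\Lambda$ is tame. This gives the inequality
$$\dim \GL(\dd) - \dim \rep(\Lambda,\dd) \geq 0$$
for every dimension vector $\dd$ of $\Lambda$. Since $\Lambda$ is triangular, the inequality (\ref{Tits-form-ineq}), which was derived from Bongartz's identification $r(i,j) = \dim_k \Ext^2_\Lambda(S_i,S_j)$ together with Krull's Principal Ideal Theorem applied to the defining equations of $\rep(\Lambda,\dd)$ inside $\rep(Q,\dd)$, yields
$$q_\Lambda(\dd) \geq \dim \GL(\dd) - \dim \rep(\Lambda,\dd).$$
Chaining these two inequalities gives $q_\Lambda(\dd) \geq 0$, as required.

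The only subtlety worth remarking on is that (\ref{Tits-form-ineq}) requires triangularity in order to have a well-defined Tits form expressible in terms of the arrow count and the size of a minimal set of relations; without the triangularity hypothesis there is no guarantee that $\Lambda$ has finite global dimension and hence no reason for the Ext-sum in $\chi_\Lambda$ to truncate at degree two. Fortunately this is exactly the hypothesis imposed in the statement, so no additional work is needed. There is no real obstacle here: the substance of the theorem has been absorbed into the geometric input (Proposition \ref{delaPena-tame-irr-prop}) and the algebraic input (Bongartz's formula plus Krull), and the argument is genuinely a two-line combination.
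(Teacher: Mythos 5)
Your proof is correct and is exactly the paper's argument: the paper derives the theorem by chaining inequality (\ref{Tits-form-ineq}) with Proposition \ref{delaPena-tame-irr-prop}, precisely as you do. (Minor quibble: triangularity is needed not for $q_{\Lambda}$ to be defined but for Bongartz's identification of $r(i,j)$ with $\dim_k\Ext^2_{\Lambda}(S_i,S_j)$, which is what makes (\ref{Tits-form-ineq}) go through.)
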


We should point out the the converse of this theorem is false in general (see \cite[Example 2.1]{BobRieSko}). However, there are important classes of finite-dimensional algebras for which the converse holds true.


\begin{theorem}\cite{BS1, BruPenSko}\label{q-tilted-and-strongly-s-c-Tits-thm} Let $\Lambda$ be either a quasi-tilted algebra or a strongly simply-connected algebra. Then, $\Lambda$ is tame if and only if the Tits form $q_{\Lambda}$ is weakly positive semi-definite.
\end{theorem}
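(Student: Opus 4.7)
The forward direction is already available from the preceding theorem. Quasi-tilted algebras have global dimension at most two and strongly simply-connected algebras are by definition triangular, so in both cases a tame $\Lambda$ is a tame triangular algebra, and the cited theorem yields $q_\Lambda(\dd)\ge 0$ for every dimension vector $\dd\in\ZZ_{\ge 0}^{Q_0}$. This is precisely the statement that $q_\Lambda$ is weakly positive semi-definite, so no further work is needed in this direction.

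For the converse, the plan is to argue by contrapositive: assume $\Lambda$ is wild and construct a positive dimension vector $\dd$ with $q_\Lambda(\dd)<0$. The two classes call for different tactics. For a quasi-tilted $\Lambda$, I would invoke Happel's derived classification, according to which $\Lambda$ is derived equivalent to either a hereditary algebra $kQ'$ or to a canonical algebra $\Lambda'$. Because the global dimension is at most two, the Tits form $q_\Lambda$ coincides with the Euler form $\chi_\Lambda$, and the latter is a derived invariant. Wildness passes to the derived model; in the hereditary case any non-isotropic imaginary Schur root produces a dimension vector on which $q$ is negative, and a parallel argument using the tubular-versus-wild dichotomy for canonical algebras handles the other case. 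For a strongly simply-connected $\Lambda$, I would instead use the convex subcategory technique: by the classification of minimally wild strongly simply-connected algebras (pg-critical, hypercritical, and related families), any wild $\Lambda$ admits a convex subcategory $\Lambda_{0}$ from a finite list on which an explicit positive dimension vector $\dd_{0}$ is known to give $q_{\Lambda_{0}}(\dd_{0})<0$. Convexity ensures that the extension of $\dd_{0}$ by zeros to a dimension vector $\dd$ of $\Lambda$ satisfies $q_\Lambda(\dd)=q_{\Lambda_{0}}(\dd_{0})$ (no new arrows or new minimal relations among the vertices of $\Lambda_0$ appear), producing the required witness.

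The main obstacle is the converse. In the strongly simply-connected case, essentially all the substance lies in the classification of minimally wild algebras in the Skowro\'nski school, a substantial case analysis that I would cite from \cite{BruPenSko} rather than reprove. In the quasi-tilted case the derived-invariance route is cleaner, but one must still verify that the isomorphism of Grothendieck groups coming from a derived equivalence can be used to produce a genuinely \emph{non-negative} (indeed positive) dimension vector of $\Lambda$: the isomorphism is only $\ZZ$-linear and need not respect the positive cone, so an auxiliary argument — typically via a tilting complex consisting of actual modules, or via an iterated tilting reduction along APR-type tilts — is needed to transport the negative-$q$ witness to a vector lying in $\ZZ_{\ge 0}^{Q_0}$.
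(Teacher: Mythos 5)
The paper does not prove this statement: it is imported wholesale from \cite{BS1} (quasi-tilted case) and \cite{BruPenSko} (strongly simply-connected case), so there is no in-paper argument to measure yours against. Your forward direction is correct and coincides with how the paper obtains the theorem of de la Peña stated immediately before this one: both classes of algebras are triangular, and combining Proposition \ref{delaPena-tame-irr-prop} with inequality (\ref{Tits-form-ineq}) gives $q_{\Lambda}(\dd)\geq 0$ for every $\dd\in\ZZ_{\geq 0}^{Q_0}$.

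For the converse your text is a roadmap of the literature rather than a proof, which is defensible since the paper itself only cites. The strongly simply-connected half is faithful to \cite{BruPenSko}: wildness forces a hypercritical or pg-critical convex subcategory, an explicit dimension vector supported there has negative Tits form, and your convexity argument for extending it by zeros is correct (convexity guarantees no extra arrows and no extra minimal relations contribute). The quasi-tilted half, however, does not close as written: the isometry of Grothendieck groups induced by a derived equivalence need not carry the positive cone to the positive cone, so weak non-negativity of $q_{\Lambda}$ is \emph{not} a derived invariant, and the ``auxiliary argument'' you defer is the entire difficulty rather than a technical footnote. The proof in \cite{BS1} does not take this route; it uses the structure theory of quasi-tilted algebras (tilted algebras of Dynkin or Euclidean type versus semiregular branch enlargements of tame concealed/canonical algebras) and exhibits explicit vectors with $q_{\Lambda}(\dd)<0$ on the wild critical frames. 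If you intend your write-up as a proof rather than a citation, that is the gap to fill.
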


\subsection{The generic decomposition for irreducible components} Let $\Lambda$ be the bound quiver algebra of a bound quiver $(Q,R)$, $\dd$ a dimension vector of $\Lambda$, and $C$ an irreducible component of $\rep(\Lambda, \dd)$.

We say that $C$ is an \emph{indecomposable} irreducible component if $C$ has a non-empty open subset of indecomposable representations. We call $C$ a \emph{Schur} irreducible component if $C$ contains a Schur representation. Note that a Schur irreducible component is always indecomposable. The converse is always true for finite-dimensional path algebras. Finally, we say that $\dd$ is a \emph{generic root} of $\Lambda$ if $\rep(\Lambda,\dd)$ has an indecomposable irreducible component.

Now, let us consider a decomposition $\dd=\dd_1+\ldots +\dd_t$ where $\dd_i \in \ZZ^{Q_0}_{\geq 0}, 1 \leq i \leq t$. 
 If $C_i$ is a $\GL(\dd_i)$-invariant subset of $\rep(\Lambda,\dd_i)$, $1 \leq i \leq t$, we denote by $C_1\oplus \ldots \oplus C_t$ the constructible subset of $\rep(\Lambda,\dd)$ consisting of all modules isomorphic to direct sums of the form $\bigoplus_{i=1}^t X_i$ with $X_i \in C_i, \forall 1 \leq i \leq t$.

The following fundamental result, which defines the generic decomposition for irreducible components in representation varieties, is due to J. A. de la Pe{\~n}a \cite[Section 1.3]{delaP} and Crawley-Boevey-Schr\"oer \cite[Theorem 1.1]{C-BS}.

\begin{theorem} If $C$ is an irreducible component of $\rep(\Lambda,\dd)$ then there are unique generic roots $\dd_1, \ldots, \dd_t$ of $\Lambda$ such that $\dd=\dd_1+\ldots +\dd_t$ and
$$
C=\overline{C_1\oplus \ldots \oplus C_t}
$$
for some indecomposable irreducible components $C_i$ of $\rep(\Lambda,\dd_i), 1 \leq i \leq t$. Moreover, the indecomposable irreducible components $C_i, 1 \leq i \leq t,$ are uniquely determined by this property.
\end{theorem}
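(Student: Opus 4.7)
My plan is to stratify $\rep(\Lambda,\dd)$ by Krull-Schmidt decomposition type and then use the irreducibility of $C$ to single out a distinguished stratum, from which the required $\dd_i$ and $C_i$ can be extracted.

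First I would invoke Krull-Schmidt: every $M\in\rep(\Lambda,\dd)$ admits an essentially unique decomposition into indecomposable summands, giving rise to an unordered tuple $\underline{\dd}(M)=(\dd_1,\ldots,\dd_t)$ of dimension vectors with $\sum_i \dd_i=\dd$. Since there are only finitely many ways to write $\dd$ as an unordered sum of non-zero vectors in $\ZZ_{\geq 0}^{Q_0}$, the variety $\rep(\Lambda,\dd)$ is partitioned into finitely many subsets
$$\mathcal{S}_{\underline{\dd}}:=\{M\in \rep(\Lambda,\dd)\mid \underline{\dd}(M)=\underline{\dd}\}.$$
The key technical point is that each $\mathcal{S}_{\underline{\dd}}$ is constructible. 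To see this, write $\mathcal{S}_{\underline{\dd}}$ as the $\GL(\dd)$-saturation of the image of the ``direct sum'' morphism
$$
\rep(\Lambda,\dd_1)^{\iso}\times\ldots\times\rep(\Lambda,\dd_t)^{\iso}\longrightarrow \rep(\Lambda,\dd),
$$
where $\rep(\Lambda,\dd_i)^{\iso}\subseteq \rep(\Lambda,\dd_i)$ denotes the open locus of indecomposable modules (open because indecomposability amounts to the endomorphism ring being local, a condition that behaves well in families via Fitting's lemma and upper semicontinuity of $\dim_k\End_\Lambda(-)$). Chevalley's theorem on constructibility then yields the claim.

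Since $C$ is irreducible and is covered by the finitely many constructible strata $\mathcal{S}_{\underline{\dd}}$, there is a unique tuple $\underline{\dd}(C)=(\dd_1,\ldots,\dd_t)$ for which $C\cap \mathcal{S}_{\underline{\dd}(C)}$ contains a dense open subset of $C$; by construction each $\dd_i$ is a generic root of $\Lambda$. I would then refine this stratification by further partitioning $\mathcal{S}_{\underline{\dd}(C)}$ according to which indecomposable irreducible component of $\rep(\Lambda,\dd_i)$ each indecomposable summand belongs to. This produces a finite collection of irreducible constructible subsets of the form $\GL(\dd)\cdot(C_1\oplus \ldots \oplus C_t)$, one for each tuple of indecomposable irreducible components $C_i\subseteq \rep(\Lambda,\dd_i)$; each such set is irreducible as the image of an irreducible variety under a morphism. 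Irreducibility of $C$ forces exactly one such tuple to produce a piece dense in $C$, which gives $C=\overline{C_1\oplus\ldots\oplus C_t}$.

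Uniqueness of $(\dd_1,\ldots,\dd_t)$ as a multiset, and of $(C_1,\ldots,C_t)$ up to reordering, is then a direct consequence of Krull-Schmidt: any alternative presentation $C=\overline{C_1'\oplus \ldots \oplus C_s'}$ produces a generic $M\in C$ whose indecomposable decomposition must simultaneously witness both presentations, so essential uniqueness of the Krull-Schmidt decomposition identifies the two tuples after a permutation. The main obstacle I anticipate is the rigorous verification of constructibility for the strata $\mathcal{S}_{\underline{\dd}}$ and their refinements by irreducible components: this amounts to showing that the extraction of the $i$-th indecomposable summand can be performed in a way that is constructible in families, which combines Chevalley's theorem with the standard but nontrivial semicontinuity properties of endomorphism rings.
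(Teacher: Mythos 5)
First, a remark on context: the paper does not prove this theorem at all --- it quotes it as a known result of de la Pe\~na and of Crawley-Boevey--Schr\"oer --- so your attempt can only be measured against the published proof, which does indeed proceed by the stratification-plus-Chevalley strategy you outline. Your overall plan is therefore the right one, but two steps as written do not hold up. The smaller problem: the locus $\rep(\Lambda,\dd_i)^{\iso}$ of indecomposables is \emph{not} open in general (for the Kronecker quiver and dimension vector $(2,2)$ the indecomposables form a proper, nowhere dense subset of the irreducible affine space of all representations); it is only constructible, which one proves by noting that the decomposable locus is a finite union of images of direct-sum morphisms and applying Chevalley to take the complement. This is harmless for your first stratification, since constructibility is all you actually use there.

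The genuine gap is in the refinement step. You partition $\mathcal{S}_{\underline{\dd}(C)}$ ``according to which indecomposable irreducible component of $\rep(\Lambda,\dd_i)$ each indecomposable summand belongs to,'' but an indecomposable module need not lie in \emph{any} indecomposable irreducible component (again the Kronecker quiver with $\dd_i=(2,2)$: the unique irreducible component is not indecomposable, yet indecomposables of that dimension exist). So your refined pieces need not cover $\mathcal{S}_{\underline{\dd}(C)}$, and the ``exactly one piece is dense'' conclusion does not follow. The correct route is to cover $\text{ind}(\Lambda,\dd_i)$ by finitely many irreducible constructible subsets $V_{i,j}$, conclude $C=\overline{\GL(\dd)\cdot(V_{1,j_1}\oplus\cdots\oplus V_{t,j_t})}$ for some choice, pass to irreducible components $C_i\supseteq\overline{V_{i,j_i}}$ of $\rep(\Lambda,\dd_i)$, and use maximality of $C$ inside $\rep(\Lambda,\dd)$ to get $C=\overline{C_1\oplus\cdots\oplus C_t}$; one must then still prove that $\overline{V_{i,j_i}}$ is dense in $C_i$ (so that $C_i$ really is an indecomposable component), which requires a fiber-dimension count for the direct-sum morphism, with Krull--Schmidt controlling the generic fiber. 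The same missing argument undermines your uniqueness step: knowing that a generic $M\in C$ has indecomposable summands lying in both $C_i$ and $C_i'$ only shows $C_i\cap C_i'\neq\emptyset$, not $C_i=C_i'$, until you know those summands sweep out dense subsets of both components.
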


With the notations of the theorem above, we call $\dd=\dd_1\oplus \ldots \oplus \dd_t$ the generic decomposition of $\dd$ in $C$, and $C=\overline{C_1\oplus \ldots \oplus C_t}$ the generic decomposition of $C$.

Let us record the following useful lemma:

\begin{lemma}\label{generic-decomp-ineq}\cite[Lemma 1.3]{delaP} Let $C$ be an irreducible component in $\rep(\Lambda,\dd)$ and let $C=\overline{C_1\oplus \ldots \oplus C_t}$ be its generic decomposition where $C_i \subseteq \rep(\Lambda,\dd_i)$, $1 \leq i \leq t$, are indecomposable irreducible components. Then,
$$
\dim \GL(\dd)-\dim C \geq \sum_{i=1}^t(\dim \GL(\dd_i)-C_i).
$$
\end{lemma}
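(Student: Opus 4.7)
The plan is to work with the direct-sum assembly morphism
$$
\mu : \GL(\dd) \times C_1 \times \cdots \times C_t \longrightarrow \rep(\Lambda,\dd), \qquad (g,(X_1,\ldots,X_t)) \longmapsto g\cdot(X_1\oplus\cdots\oplus X_t).
$$
Since each $C_i$ is $\GL(\dd_i)$-invariant, the image of $\mu$ equals the constructible set $\GL(\dd)\cdot(C_1\oplus\cdots\oplus C_t)$, which by the generic decomposition theorem stated just above is dense in the irreducible variety $C$. The source is irreducible as a product of irreducibles, so $\mu$ is dominant onto $C$, and the fiber-dimension theorem gives
$$
\dim \mu^{-1}(X) \;=\; \dim\GL(\dd) + \sum_{i=1}^t \dim C_i - \dim C
$$
for a general point $X$ of the image.

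The second step is to bound $\dim\mu^{-1}(X)$ from below by exhibiting an explicit subvariety of the fiber. Fix a generic $X = X_1\oplus\cdots\oplus X_t$ with $X_i\in C_i$, and use the block-diagonal embedding $\prod_{i=1}^t \GL(\dd_i) \hookrightarrow \GL(\dd)$ to define
$$
\iota : \prod_{i=1}^t \GL(\dd_i) \longrightarrow \mu^{-1}(X), \qquad (g_1,\ldots,g_t) \longmapsto \bigl((g_1,\ldots,g_t),\, (g_1^{-1}\cdot X_1,\ldots,g_t^{-1}\cdot X_t)\bigr).
$$
This map lands inside $\GL(\dd)\times C_1\times\cdots\times C_t$ because each $C_i$ is $\GL(\dd_i)$-stable, a direct check shows $\mu\circ\iota$ is the constant $X$, and $\iota$ is injective since the first coordinate already recovers the tuple. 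Hence $\dim \mu^{-1}(X) \geq \sum_{i=1}^t \dim\GL(\dd_i)$.

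Combining the equality with the inequality yields
$$
\dim\GL(\dd) + \sum_{i=1}^t \dim C_i - \dim C \;\geq\; \sum_{i=1}^t \dim\GL(\dd_i),
$$
which rearranges immediately to the estimate in the lemma. The only substantive input is the generic decomposition theorem cited just above, which is what guarantees that the image of $\mu$ is dense in $C$; once that is in hand, the proof reduces to the fiber-dimension theorem plus the obvious inclusion supplied by $\iota$, so I do not anticipate any real obstacle in the argument itself.
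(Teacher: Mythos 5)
Your argument is correct. The paper itself gives no proof of this lemma --- it is quoted directly from de la Pe\~na's Lemma 1.3 --- and your dimension count via the assembly morphism $\mu:\GL(\dd)\times C_1\times\cdots\times C_t\to \overline{C_1\oplus\cdots\oplus C_t}=C$ (generic fiber dimension from dominance, lower bound on the fiber from the block-diagonal copy of $\prod_i\GL(\dd_i)$) is exactly the standard proof of that result; the only cosmetic point is that one should corestrict $\mu$ to the irreducible variety $C$ before invoking the fiber-dimension theorem, which you implicitly do.
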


Now, we are ready to prove:

\begin{proposition}\label{qt-ssc-prop} Assume that the field of rational invariants $k(C)^{\GL(\dd)}\simeq k$ or $k(t)$ for each generic root $\dd$ of $\Lambda$ and each indecomposable irreducible component $C$ of $\rep(\Lambda,\dd)$. Then, the following statements hold true.
\begin{enumerate}
\renewcommand{\theenumi}{\arabic{enumi}}
\item If $\Lambda$ is a triangular algebra  then $q_{\Lambda}(\dd) \geq 0$ for each dimension vector $\dd$ of $\Lambda$.
\item If $\Lambda$ is a quasi-tilted algebra or a strongly simply-connected algebra then $\Lambda$ is tame.
\end{enumerate}
\end{proposition}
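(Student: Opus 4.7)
The plan is to derive, from the hypothesis on fields of rational invariants, the dimension bound $\dim \GL(\dd) \geq \dim \rep(\Lambda,\dd)$ for every dimension vector $\dd$ of $\Lambda$, and then to feed this into the Tits-form inequality (\ref{Tits-form-ineq}) together with Theorem \ref{q-tilted-and-strongly-s-c-Tits-thm}. The bridge from rational invariants to dimensions is Rosenlicht's theorem combined with the fiber dimension theorem, which yields
$$
\trdeg_k k(C)^{\GL(\dd)} = \dim C - \dim \GL(\dd) + \min\{\dim_k \End_\Lambda(M) \mid M \in C\}
$$
for any indecomposable irreducible component $C$ of $\rep(\Lambda,\dd)$, exactly as in the proof of the implication $(4)\Rightarrow(1)$ of Theorem \ref{reptype-quivers-thm}. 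Applying this to a generic root $\dd$ and any such $C$, the hypothesis bounds the left-hand side by $1$ while the endomorphism term is always at least $1$; rearranging gives $\dim \GL(\dd) - \dim C \geq 0$.

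Next I would extend this bound to arbitrary irreducible components of $\rep(\Lambda,\dd)$ for an arbitrary dimension vector $\dd$. Writing the generic decomposition $C = \overline{C_1 \oplus \cdots \oplus C_t}$ with each $C_i$ an indecomposable irreducible component of $\rep(\Lambda,\dd_i)$, Lemma \ref{generic-decomp-ineq} gives
$$
\dim \GL(\dd) - \dim C \;\geq\; \sum_{i=1}^t \bigl(\dim \GL(\dd_i) - \dim C_i\bigr) \;\geq\; 0,
$$
so that $\dim \GL(\dd) \geq \dim \rep(\Lambda,\dd)$ for every dimension vector $\dd$. Part (1) then follows immediately: since $\Lambda$ is triangular, inequality (\ref{Tits-form-ineq}) reads
$$
q_\Lambda(\dd) \;\geq\; \dim \GL(\dd) - \dim \rep(\Lambda,\dd) \;\geq\; 0.
$$

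For part (2), I would observe that both quasi-tilted and strongly simply-connected algebras fit into the framework of part (1): strongly simply-connected algebras are triangular by definition, and in the quasi-tilted case the global-dimension-$\le 2$ hypothesis allows the same Bongartz-style counting of relations that underlies (\ref{Tits-form-ineq}). Hence part (1) yields $q_\Lambda(\dd) \geq 0$ for every $\dd$, i.e.\ $q_\Lambda$ is weakly positive semi-definite, and Theorem \ref{q-tilted-and-strongly-s-c-Tits-thm} forces $\Lambda$ to be tame. The single delicate step I anticipate is the very first one: verifying that the Rosenlicht/fiber-dimension identity applies cleanly to an indecomposable irreducible component $C$ (rather than to a single $\GL(\dd)$-orbit closure or the full ambient $\rep(\Lambda,\dd)$), and that $\min_{M\in C}\dim_k\End_\Lambda(M)$ is genuinely the right correction term; once this is set up, the rest is a straightforward chain of Lemma \ref{generic-decomp-ineq}, inequality (\ref{Tits-form-ineq}), and Theorem \ref{q-tilted-and-strongly-s-c-Tits-thm}.
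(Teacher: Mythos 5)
Your proposal is correct and follows essentially the same route as the paper: the Rosenlicht/fiber-dimension identity on an indecomposable component gives $\dim\GL(\dd)-\dim C\geq 0$ for generic roots, Lemma \ref{generic-decomp-ineq} propagates this to all components and all dimension vectors, and then inequality (\ref{Tits-form-ineq}) and Theorem \ref{q-tilted-and-strongly-s-c-Tits-thm} finish parts (1) and (2). The "delicate step" you flag is unproblematic: Rosenlicht's theorem applies to any irreducible $\GL(\dd)$-invariant variety, and the generic stabilizer is $\Aut_\Lambda(M)$, open dense in $\End_\Lambda(M)$, so the correction term is exactly $\min_{M\in C}\dim_k\End_\Lambda(M)$.
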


\begin{proof}(1) Let $\dd$ be a generic root of $\Lambda$ and let $C$ be an indecomposable irreducible component of $\rep(\Lambda,\dd)$. Choose $M$ in $C$ so that $\dim_k \End_{\Lambda}(M)=\min \{\dim_k \End_{\Lambda}(M')\mid M' \in C\}$. Then, we have that
$$
tr.deg_k k(C)^{\GL(\dd)}=\dim C-\dim \GL(\dd)+\dim_k\End_{\Lambda}(M),
$$
and so, $\dim \GL(\dd)-\dim C\geq 0$. But this remains true for any dimension vector $\dd$ of $\Lambda$ and any irreducible component $C$ of $\rep(\Lambda,\dd)$ by Lemma \ref{generic-decomp-ineq}. From this and inequality (\ref{Tits-form-ineq}), we deduce that $q_{\Lambda}(\dd)\geq 0$ for each dimension vector $\dd$.

(2) This follows now from the first part and Theorem \ref{q-tilted-and-strongly-s-c-Tits-thm}.
\end{proof}

In what follows, we explain how to reduce the problem of describing fields of rational invariants on irreducible components of representation varieties to the case where the irreducible components involved are indecomposable. This was already done by Kac in \cite{Kac} in the context of quivers with no relations. It turns out that Kac's proof in \emph{loc. cit.} can be extended to arbitrary finite-dimensional algebras. Indeed, let $\Lambda=kQ/\langle R \rangle$ be the bound quiver algebra of a bound quiver $(Q,R)$, $\dd$ a dimension vector, and $C$ an irreducible component of $\rep(\Lambda,\dd)$.

Let $\dd=\dd_1^{\oplus m_1}\oplus \ldots \oplus \dd_n^{\oplus m_n}$ be the generic decomposition of $\dd$ in $C$ where $\dd_1, \dots, \dd_n$ are distinct generic roots of $\Lambda$, and $m_1, \ldots, m_n$ are positive integers. Next, we assume that the generic decomposition of $C$ is of the form
$$C=\overline{C_1^{\oplus m_1}\oplus \ldots \oplus C_n^{\oplus m_n}},$$
where $C_i \subseteq \rep(\Lambda,\dd_i)$, $1 \leq i \leq n$, are indecomposable irreducible components. Fix a decomposition $k^{\dd(v)}=\underbrace{k^{\dd_1(v)}\oplus \ldots \oplus k^{\dd_1(v)}}_{m_1}  \oplus \ldots \oplus \underbrace{k^{\dd_n(v)}\oplus \ldots \oplus k^{\dd_n(v)}}_{m_n}$ for each vertex $v \in Q_0$, and then embed $\widetilde{C}:=C_1^{m_1} \times \ldots \times C_n^{m_n}$ diagonally in $C$ and $G:=\GL(\dd_1)^{m_1} \times \ldots \times \GL(\dd_n)^{m_n} $ diagonally in $\GL(\dd)$.

Denote by $T_1^{(i)}$ the one-dimensional torus in $\GL(\dd_i)$, set $T:=(T_1^{(1)})^{m_1}\times \ldots \times (T_1^{(n)})^{m_n}$, and note that $T \subseteq C_{\GL(\dd)}(M)$ for any $M \in \widetilde{C}$. Next, we choose an open and dense subset $C_{i,0}$ of $C_i$ consisting of indecomposable representations such that $$\overline{\GL(\dd)C_{1,0}^{m_1}\times \ldots \times C_{n,0}^{m_n}}=C.$$Furthermore, for any representation $M \in C_{1,0}^{m_1}\times \ldots \times C_{n,0}^{m_n}$, it is not difficult to see that $T$ is a maximal torus of $C_{\GL(\dd)}(M)$ (see for example \cite[Section 2.2]{KraRie}). Also, note that the centralizer of $T$ in $\GL(\dd)$ is precisely $G$, and the normalizer $N$ of $T$ in $\GL(\dd)$ is
$$N=(\GL(\dd_1)^{m_1}\rtimes S_{m_1}) \times \ldots \times (\GL(\dd_n)^{m_n}\rtimes S_{m_n}).$$ (Here, $S_m$ denotes the symmetric group on $m$ elements.)

Let us summarize what we have obtained so far:
\begin{enumerate}
\renewcommand{\theenumi}{\arabic{enumi}}
\item $\overline{\GL(\dd)\widetilde{C}}=C$;
\item the generic point $M$ in $\widetilde{C}$ has the property that $T$ is a maximal torus of $C_{\GL(\dd)}(M)$; \item $\widetilde{C}$ is an $N$-invariant closed subvariety of $C$.
\end{enumerate}

In what follows, if $R$ is an integral domain, we denote its field of fractions by $\qq(R)$. Moreover, if $K/k$ is a field extension and $m$ is a positive integer, we define $S^m(K/k)$ to be the field $(\qq(K^{\otimes m}))^{S_m}$ which is, in fact, the same as $\qq((K^{\otimes m})^{S_m})$ since $S_m$ is a finite group.

\begin{proposition}\label{rational-inv-generic-decomp-prop} Let $C=\overline{C_1^{\oplus m_1}\oplus \ldots \oplus C_n^{\oplus m_n}}$ be the generic decomposition of $C$ where $C_i \subseteq \rep(\Lambda,\dd_i)$, $1 \leq i \leq n$, are indecomposable irreducible components, $m_1,\ldots, m_n$ are positive integers, and $\dd_i\neq \dd_j, \forall 1 \leq i\neq j\leq n$. Then,
$$
k(C)^{\GL(\dd)} \simeq \qq(\bigotimes_{i=1}^n S^{m_i}(k(C_i)^{\GL(\dd_i)}/k)).
$$
\end{proposition}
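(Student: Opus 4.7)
The plan is to adapt Kac's strategy from \cite{Kac} (the quiver case) to this more general setting, using the three structural properties of $\widetilde{C}$, $T$, $G$, and $N$ that were established just before the statement. The argument splits naturally into two halves: first establishing an isomorphism of fields
$$
\rho: k(C)^{\GL(\dd)} \xrightarrow{\sim} k(\widetilde{C})^N, \qquad f \mapsto f|_{\widetilde{C}},
$$
and then computing the right-hand side directly from the product structures of $\widetilde{C}$ and $N$.

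For the first half, well-definedness of $\rho$ follows because $\widetilde{C}$ is $N$-invariant by property (3), and injectivity follows because $\GL(\dd)\widetilde{C}$ is dense in $C$ by property (1). The surjectivity is the main technical point. Given $f \in k(\widetilde{C})^N$, I would define $\widetilde f(gx) := f(x)$ on $\GL(\dd)\widetilde{C}$ and show this is well-defined on a dense open subset, hence extends to a rational $\GL(\dd)$-invariant function on $C$. The well-definedness reduces to the following claim: for a generic $M \in \widetilde{C}$, every point of $\GL(\dd)\cdot M \cap \widetilde{C}$ lies in the single $N$-orbit $N\cdot M$. To prove this, suppose $M' = gM \in \widetilde{C}$. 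Then $T \subseteq \Stab_{\GL(\dd)}(M')$, so $g^{-1}Tg \subseteq \Stab_{\GL(\dd)}(M)$; by property (2) both $T$ and $g^{-1}Tg$ are maximal tori of $\Stab_{\GL(\dd)}(M)$, hence conjugate by some $h \in \Stab_{\GL(\dd)}(M)$, giving $gh \in N_{\GL(\dd)}(T) = N$ and $M' = ghM \in N\cdot M$.

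For the second half, I exploit the direct product structures $\widetilde{C} = \prod_i C_i^{m_i}$ and $N = \prod_i (\GL(\dd_i)^{m_i} \rtimes S_{m_i})$, where the factors act on separate coordinates. Because rational function fields of products of irreducible varieties (over the algebraically closed $k$) satisfy $k(X\times Y) \cong \qq(k(X)\otimes_k k(Y))$, and because invariants commute with the tensor-product-of-fields construction when the groups act on separate factors, I get
\begin{equation*}
k(\widetilde{C})^{N} \;\cong\; \qq\!\left(\bigotimes_{i=1}^n k(C_i^{m_i})^{\GL(\dd_i)^{m_i}\rtimes S_{m_i}}\right) \;\cong\; \qq\!\left(\bigotimes_{i=1}^n S^{m_i}\bigl(k(C_i)^{\GL(\dd_i)}/k\bigr)\right),
\end{equation*}
where the last isomorphism uses first that $k(C_i^{m_i})^{\GL(\dd_i)^{m_i}} \cong \qq\bigl((k(C_i)^{\GL(\dd_i)})^{\otimes m_i}\bigr)$ and then that the residual $S_{m_i}$-action yields precisely the symmetric product $S^{m_i}(k(C_i)^{\GL(\dd_i)}/k)$ as defined in the paragraph preceding the proposition. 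Composing this with $\rho^{-1}$ yields the desired isomorphism.

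The main obstacle is the surjectivity of $\rho$, i.e., verifying that a generic $\GL(\dd)$-orbit meets $\widetilde{C}$ in a single $N$-orbit. Everything else is either bookkeeping with product decompositions or a formal invariant-theoretic calculation; the maximal-torus argument is the only place where the subtle genericity and the nondegeneracy inputs (properties (1)--(3)) really get used together. Once that step is pinned down, both the field isomorphism $\rho$ and the explicit tensor product description fall out cleanly.
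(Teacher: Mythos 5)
Your proposal is correct and follows essentially the same route as the paper: the paper also reduces to showing that a generic $\GL(\dd)$-orbit meets $\widetilde{C}$ in a single $N$-orbit via exactly your maximal-torus conjugation argument, and then invokes the universal property of rational quotients to conclude $k(C)^{\GL(\dd)}\simeq k(\widetilde{C})^{N}$ (where you instead extend invariant functions by hand). The only difference is that the paper leaves the final computation of $k(\widetilde{C})^{N}$ as ``the proof now follows,'' whereas you spell out the product/symmetric-power bookkeeping explicitly.
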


\begin{proof} Let $\pi: C \dashrightarrow C/\GL(\dd)$ be the rational quotient map for the action of $\GL(\dd)$ on $C$. Now, property $(1)$ above tells us that the restriction $\phi=\pi| \widetilde{C}:\widetilde{C}\dashrightarrow C/\GL(\dd)$ is a well-defined dominant rational map.

Let $M_0$ be a generic point in $\widetilde{C}$, $M \in \phi^{-1}(\phi(M_0))$, and $g \in \GL(\dd)$ such that $M=gM_0$. Note that $T$ and $g^{-1}Tg$ are maximal tori of $C_{\GL(\dd)}(M_0)$, and so $T=(gg')^{-1}T(gg')$ for some $g' \in C_{\GL(\dd)}(M_0)$. Hence, $g_0:=gg' \in N$ and $M=g_0M_0$, i.e., $NM_0=\phi^{-1}(\phi(M_0))$. It now follows from the universal property of rational quotients that $\phi$ is the rational quotient map for the action of $N$ on $\widetilde{C}$ and hence $k(\widetilde{C})^{N}\simeq k(C)^{\GL(\dd)}$. The proof now follows.
\end{proof}

\begin{remark}\label{rationality-rmk} Let $\Lambda$ be a tame finite-dimensional $k$-algebra, $\dd$ a Schur root of $\Lambda$, and $C$ a Schur irreducible component of $\rep(\Lambda,\dd)$. Using Proposition \ref{delaPena-tame-irr-prop} it is easy to see that $\trdeg_k k(C)^{\GL(\dd)} \in \{0,1\}$, and hence
$$
k(C)^{\GL(\dd)} \simeq k \text{~or~} k(t) \Longleftrightarrow k(C)^{\GL(\dd)} \text{~is a rational field over~} k.
$$

We refer to the problem that asks to prove that $k(C)^{\GL(\dd)}$ is rational over $k$ for each Schur root $\dd$ of $\Lambda$ and each Schur irreducible component $C$ of $\rep(\Lambda,\dd)$ as \emph{the rationality problem for $\Lambda$}. We have seen that the rationality problem for tame quivers is already settled. Moreover, Schofield has obtained in \cite{S3} a birational classification of moduli spaces of representations for quivers. In particular, he solved the rationality problem for quivers when the dimension vectors involved are indivisible Schur roots. However, the rationality problem for wild quivers in the non-indivisible case is a long-standing open problem (see for example \cite{LeB} and \cite{For}). 

To tackle the rationality problem for finite-dimensional algebras, we are going to use homological algebra. This strategy is explained in the next section.  
\end{remark}

\section{Exceptional sequences and rational invariants} \label{excep-ratio-inv-sec} In this section we explain how exceptional sequences can be used in the study of the fields of rational invariants for finite-dimensional algebras.

Let $\Lambda$ be the bound quiver algebra of a bound quiver $(Q,R)$. A sequence $\mathcal{E}=(E_1, \dots, E_t)$ of finite-dimensional $\Lambda$-modules is called an \emph{orthogonal exceptional sequence} if the following conditions are satisfied:
\begin{enumerate}
\renewcommand{\theenumi}{\arabic{enumi}}
\item $E_i$ is an exceptional module, i.e, $\End_{\Lambda}(E_i)=k$ and $\Ext^l_{\Lambda}(E_i,E_i)=0$ for all $l \geq 1$ and $1 \leq i \leq t$;

\item $\Ext_{\Lambda}^l(E_i,E_j)=0$ for all $l \geq 0$  and $1 \leq i<j \leq t$;

\item $\Hom_{\Lambda}(E_j,E_i)=0$ for all $1 \leq i<j \leq t$.
\end{enumerate}
(If we drop condition $(3)$, we simply call $\mathcal{E}$ an \emph{exceptional sequence}.)

Given an orthogonal exceptional sequence $\mathcal{E}$, consider the full subcategory $\filt_{\mathcal{E}}$ of $\rep(\Lambda)$ whose objects $M$ have a finite filtration $0=M_0\subseteq M_1 \subseteq \dots \subseteq M_s=M$ of submodules such that each factor $M_j/M_{j-1}$ is isomorphic to one the $E_i$. It is clear that $\filt_{\mathcal{E}}$ is a full exact subcategory of $\rep(\Lambda)$ which is closed under extensions. Moreover, Ringel \cite{R2} (see also \cite{DW2}) showed that $\filt_{\mathcal{E}}$ is an abelian subcategory whose simple objects are precisely $E_1, \ldots, E_t$.

The category $\filt_{\mathcal{E}}$ is determined by the Yoneda algebra $\Ext^{\bullet}_Q(\bigoplus_{i=1}^t E_i, \bigoplus_{i=1}^t E_i)$ equipped with its (minimal) $A_{\infty}$-algebra structure as shown by Keller \cite{Kel1, Kel2}. More precisely, let us write $\Ext_{\Lambda}^l(\bigoplus_{i=1}^t E_i,\bigoplus_{i=1}^t E_i)=\bigoplus_{i,j} \Ext_{\Lambda}^l(E_j,E_i)$ and consider the induced  $R$-bimodule structure on $\Ext_{\Lambda}^l(\bigoplus_{i=1}^t E_i,\bigoplus_{i=1}^t E_i)$ where $R$ is the commutative $k$-algebra $k^t$. It is clear that each multiplication map $m_n$ of the $A_{\infty}$-algebra $\Ext^{\bullet}_{\Lambda}(\bigoplus_{i=1}^t E_i,\bigoplus_{i=1}^t E_i)$ defines an $R$-bimodule map $$m_n:\Ext_{\Lambda}^1(\bigoplus_{i=1}^t E_i,\bigoplus_{i=1}^t E_i)^{\bigotimes^n_{R}} \to \Ext_{\Lambda}^2(\bigoplus_{i=1}^t E_i,\bigoplus_{i=1}^t E_i).$$


Now, let $Q_{\mathcal{E}}$ be the quiver with vertices $1, \dots, t$ and $\dim_{k}\Ext^1_{\Lambda}(E_i,E_j)^*$ arrows from $i$ to $j$. The Maurer-Cartan map is, by definition, the map $m=\bigoplus_{n\geq 2} m_n$, and hence its dual is $m^*:\Ext^{2}_{\Lambda}(\bigoplus_{i=1}^t E_i,\bigoplus_{i=1}^t E_i)^* \to kQ_{\mathcal{E}}.$ Note that $Q_{\mathcal{E}}$ has no oriented cycles and that is why for the dual of the Maurer-Cartan map we can just work with the path algebra of $Q_{\mathcal{E}}$ instead of its completed path algebra. Also, the two-sided ideal of $kQ_{\mathcal{E}}$ generated by $\Ima(m^*)$ is an admissible ideal and, hence, is generated by finitely many relations. Finally, we define $\Lambda_{\mathcal{E}}=kQ_{\mathcal{E}}/(\Ima(m^*))$.

Now, we are ready to state the following important result:

\begin{theorem}\label{A-infinity-functor-open-thm} Keeping the same notations as above, the following statements are true.
\begin{enumerate}
\renewcommand{\theenumi}{\arabic{enumi}}

\item There exists an equivalence of categories $F_{\mathcal{E}}$ from $\rep(\Lambda_{\mathcal{E}})$ to $\filt_{\mathcal{E}}$ sending the simple $\Lambda_{\mathcal{E}}$-module $S_i$ at vertex $i$ to $E_i$ for all $1 \leq i \leq t$.

\item Given a dimension vector $\dd \in \ZZ_{\geq 0}^{Q_0}$, the set
$$
\filt_{\mathcal{E}}(\dd):=\{M \in \rep(\Lambda,\dd) \mid M \text{~is in~} \filt_{\mathcal{E}}\}
$$
is open in $\rep(\Lambda, \dd)$.
\end{enumerate}
\end{theorem}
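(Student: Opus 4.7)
\emph{Part (1).} My plan is to apply Keller's $A_\infty$-formalism to the graded Ext-algebra of $E = \bigoplus_{i=1}^t E_i$. By Kadeishvili's theorem, $A^\bullet := \Ext_\Lambda^\bullet(E,E)$ carries a minimal $A_\infty$-structure $\{m_n\}_{n \geq 2}$, quasi-isomorphic as an $A_\infty$-algebra to $\mathrm{REnd}_\Lambda(E)$. Since, by Ringel \cite{R2}, $\filt_{\mathcal{E}}$ is abelian with simples $E_1,\ldots,E_t$, Keller's reconstruction theorem \cite{Kel1, Kel2} identifies $\filt_{\mathcal{E}}$ with the category of finite-dimensional $A_\infty$-modules over $A^\bullet$ concentrated in degree $0$. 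Such a module structure on $V = \bigoplus_i V_i$ over $R = k^t$ is a Maurer-Cartan element $\alpha \in \End_R(V) \otimes_R A^1$ satisfying $\sum_{n \geq 2} m_n(\alpha, \ldots, \alpha) = 0$; giving $\alpha$ amounts to giving a representation of $Q_{\mathcal{E}}$ on $V$, and the Maurer-Cartan condition is, after dualization, precisely the vanishing of the relations $\Ima(m^*) \subset kQ_{\mathcal{E}}$. Hence the category of such twisted objects is $\rep(\Lambda_{\mathcal{E}})$, yielding the desired equivalence $F_{\mathcal{E}}$, which by construction sends $S_i$ to $E_i$.

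\emph{Part (2).} For each $\dd' \in \ZZ_{\geq 0}^t$ with $\sum_i \dd'(i) \ddim E_i = \dd$, I would fix at every vertex $v \in Q_0$ a direct sum decomposition $k^{\dd(v)} = \bigoplus_i (k^{\ddim E_i(v)})^{\dd'(i)}$ and consider the assembly morphism
$$
\psi_{\dd'} : \rep(\Lambda_{\mathcal{E}}, \dd') \longrightarrow \rep(\Lambda, \dd)
$$
whose image consists of the filtered $\Lambda$-modules in standard form (this uses $F_{\mathcal{E}}$). Then $\filt_{\mathcal{E}}(\dd) = \bigcup_{\dd'} \GL(\dd) \cdot \Ima(\psi_{\dd'})$, a finite union of constructible subsets of $\rep(\Lambda, \dd)$. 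To upgrade this constructible description to openness, my plan is to produce a homological characterization of $\filt_{\mathcal{E}}$-membership: a module $M$ lies in $\filt_{\mathcal{E}}$ if and only if the dimensions $\dim_k \Hom_\Lambda(T_j, M)$ and $\dim_k \Hom_\Lambda(M, T'_j)$ attain their generic (minimum) values for a fixed finite family of test modules $T_j, T'_j$ detecting the left and right perpendicular subcategories of $\mathcal{E}$. Since Hom-dimensions are upper semi-continuous, the resulting locus is open, and the identification of this locus with $\filt_{\mathcal{E}}(\dd)$ reduces to a module-theoretic check using the abelian structure of $\filt_{\mathcal{E}}$.

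\emph{Main obstacle.} The technical heart of the argument is this final step in part (2): identifying the correct family of test modules so that Hom-vanishing pins down the subcategory $\filt_{\mathcal{E}}$, or equivalently verifying directly that the constructible set $\bigcup_{\dd'} \GL(\dd) \cdot \Ima(\psi_{\dd'})$ is open in $\rep(\Lambda, \dd)$. This is where I would invoke the Crawley-Boevey--Schr\"oer circle of ideas on openness of strata in representation varieties, since the $A_\infty$-machinery of part (1) only gives the category-theoretic equivalence and not, directly, the geometric consequence needed for part (2).
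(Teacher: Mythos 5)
Your plan is essentially the paper's own treatment: the paper does not prove this theorem but attributes part (1) to Keller's $A_\infty$-formalism \cite{Kel1, Kel2} and part (2) to Crawley-Boevey and Schr\"oer \cite[Corollary 1.5]{C-BS}, the same two sources your sketch reduces to. The ``main obstacle'' you flag in part (2) --- upgrading the constructible description of $\filt_{\mathcal{E}}(\dd)$ to openness --- is exactly what \cite[Corollary 1.5]{C-BS} supplies (its $\Ext^1$-vanishing hypothesis being guaranteed by conditions (1) and (2) in the definition of an orthogonal exceptional sequence), so invoking it as you propose closes the argument.
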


The first part of the theorem above is due to Keller \cite{Kel1, Kel2} and uses the $A_{\infty}$-formalism. The second part was proved by Crawley-Boevey and Schr{\"o}er in \cite[Corollary 1.5]{C-BS}.

Let $\mathcal{E}=(E_1, \dots, E_t)$ be an orthogonal exceptional sequence of $\Lambda$-modules and let $F_{\mathcal{E}}$ be a functor as in Theorem \ref{A-infinity-functor-open-thm}{(1)}. Now, consider a dimension vector $\dd'$ of $Q_{\mathcal{E}}$ and set $\dd=\sum_{1 \leq i \leq t}\dd'(i)\ddim E_i$. Next, we explain how the functor $F_{\mathcal{E}}$ gives rise to well-behaved morphisms at the level of representation varieties:

\begin{proposition} \label{morphisms-prop} Keep the same notations as above. Then, there exist a morphism of algebraic groups $\varphi:\GL(\dd')\to \GL(\dd)$ and a regular morphism $f_{\mathcal{E}}:\rep(\Lambda_{\mathcal{E}},\dd') \to \rep(\Lambda, \dd)$ such that:
\begin{enumerate}
\renewcommand{\theenumi}{\arabic{enumi}}
\item $f_{\mathcal{E}}(M') \simeq F_{\mathcal{E}}(M')$ for all $M' \in \rep(\Lambda_{\mathcal{E}},\dd')$;
\item $f_{\mathcal{E}}(g' \cdot M')=\varphi(g')\cdot f_{\mathcal{E}}(M')$ for all $M' \in \rep(\Lambda_{\mathcal{E}},\dd')$ and $g' \in \GL(\dd')$.
\end{enumerate}
\end{proposition}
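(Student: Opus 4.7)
The plan is to convert the categorical equivalence $F_{\mathcal{E}}$ into a concrete, basis-dependent construction at the level of representation varieties. The main obstacle is ensuring that the $\Lambda$-module structure on $f_{\mathcal{E}}(M')$ depends polynomially on the entries of $M'$; this is achieved by fixing, once and for all, cocycle representatives for the extensions encoded by the arrows of $Q_{\mathcal{E}}$, together with cocycles for the higher Massey products captured by the $A_{\infty}$-structure when $\Lambda$ is not hereditary.

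First, I would fix, for each $i \in \{1,\dots,t\}$, a concrete realization of $E_i$ as a $\Lambda$-module whose underlying vector space at each vertex $v \in Q_0$ is $k^{(\ddim E_i)(v)}$. For every vertex $v \in Q_0$, put
$$
V(v)=\bigoplus_{i=1}^{t} k^{\dd'(i)}\otimes_k E_i(v),
$$
which has dimension $\dd(v)$, and fix an identification $V(v)\simeq k^{\dd(v)}$. The morphism $\varphi:\GL(\dd')\to \GL(\dd)$ is then defined vertex-wise by
$$
\varphi(g')(v)=\bigoplus_{i=1}^{t} g'(i)\otimes \Id_{E_i(v)},\qquad g'=(g'(i))_{i=1}^t,
$$
and is manifestly a morphism of algebraic groups.

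To construct $f_{\mathcal{E}}$, for each arrow $a':i\to j$ of $Q_{\mathcal{E}}$ I would select an extension class $\eta_{a'}\in \Ext^1_{\Lambda}(E_i,E_j)$ dual to $a'$, and realize it by an explicit $\Lambda$-module structure on $E_i\oplus E_j$; after fixing a vector-space splitting, this structure is recorded by linear maps $\xi_{a'}(a):E_i(ta)\to E_j(ha)$ indexed by $a\in Q_1$. Given $M'\in\rep(\Lambda_{\mathcal{E}},\dd')$, I would define $f_{\mathcal{E}}(M')\in \rep(Q,\dd)$ by
$$
f_{\mathcal{E}}(M')(a)=\bigoplus_{i=1}^{t}\Id_{M'(i)}\otimes E_i(a) + \sum_{a'\in (Q_{\mathcal{E}})_1}M'(a')\otimes \xi_{a'}(a) + \Phi_a(M')
$$
for every $a\in Q_1$, where $\Phi_a(M')$ is a correction polynomial in the entries of $M'$ assembled from cocycles representing the higher products $m_n$ ($n\geq 3$) in the minimal $A_\infty$-algebra $\bigoplus_i\Ext^{\bullet}_{\Lambda}(E_i,E_i)$; this correction vanishes when $\Lambda$ is hereditary. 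The fact that $f_{\mathcal{E}}(M')$ satisfies all relations in $R$ translates, via the very definition $\Lambda_{\mathcal{E}}=kQ_{\mathcal{E}}/(\Ima m^*)$, into the condition that $M'$ satisfies the relations of $\Lambda_{\mathcal{E}}$, which is the Maurer-Cartan equation. Since every entry involved is a polynomial in those of $M'$, the map $f_{\mathcal{E}}:\rep(\Lambda_{\mathcal{E}},\dd')\to \rep(\Lambda,\dd)$ is a regular morphism.

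Finally, property $(2)$ is immediate from the block structure of $V$: acting on $M'$ by $g'=(g'(i))$ tensors each summand $M'(i)\otimes E_i(v)$ by $g'(i)\otimes \Id$, which is precisely the way $\varphi(g')$ acts on $V(v)$. For property $(1)$, the assignment $M'\mapsto f_{\mathcal{E}}(M')$ defines an exact $k$-linear functor from $\rep(\Lambda_{\mathcal{E}})$ into $\filt_{\mathcal{E}}$ sending each simple $S_i$ to $E_i$ and realizing the same extensions and Massey products as the functor $F_{\mathcal{E}}$; by the uniqueness clause of Theorem \ref{A-infinity-functor-open-thm}{(1)}, the two functors are naturally isomorphic, whence $f_{\mathcal{E}}(M')\simeq F_{\mathcal{E}}(M')$ for every $M'\in \rep(\Lambda_{\mathcal{E}},\dd')$.
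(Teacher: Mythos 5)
Your construction takes a genuinely different route from the paper's. You rebuild the functor by hand from cocycle representatives of the $\Ext^1$-classes attached to the arrows of $Q_{\mathcal{E}}$ together with higher $A_\infty$-corrections, whereas the paper invokes Watts' theorem to write $F_{\mathcal{E}}=P\otimes_{\Lambda_{\mathcal{E}}}-$ for a finite-dimensional $\Lambda$-$\Lambda_{\mathcal{E}}$-bimodule $P$ that is projective as a right $\Lambda_{\mathcal{E}}$-module, and then defines $f_{\mathcal{E}}(M')(a)$ by substituting the matrices $M'(p')$ for the block entries $\overline{p'}\in\Lambda_{\mathcal{E}}$ of $P(a)$. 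The bimodule route makes regularity, the equivariance in $(2)$, and the object-wise agreement in $(1)$ all immediate, because $f_{\mathcal{E}}$ literally computes $P\otimes_{\Lambda_{\mathcal{E}}}M'$ in fixed bases of the given functor $F_{\mathcal{E}}$.

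As written, your argument has two gaps. First, the claim that $f_{\mathcal{E}}(M')$ satisfies the relations $R$ precisely because $M'$ satisfies the relations of $\Lambda_{\mathcal{E}}$ is not a formality: it is the Maurer--Cartan correspondence underlying Keller's theorem, and it holds only if the corrections $\Phi_a$ are assembled from a homotopy-transfer datum compatible with the chosen minimal $A_\infty$-model; you are re-deriving Theorem \ref{A-infinity-functor-open-thm}(1) rather than using it, without supplying the verification. Second, and more seriously, Theorem \ref{A-infinity-functor-open-thm}(1) contains no uniqueness clause. Two exact equivalences $\rep(\Lambda_{\mathcal{E}})\to\filt_{\mathcal{E}}$ sending each $S_i$ to $E_i$ need not be naturally isomorphic, nor even agree on isomorphism classes of objects: for $\Lambda_{\mathcal{E}}=kK_2$ the outer automorphisms (a $\PGL_2$) fix the simples but move the isomorphism classes of the $(1,1)$-dimensional indecomposables. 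So your final step does not yield $f_{\mathcal{E}}(M')\simeq F_{\mathcal{E}}(M')$ for the given $F_{\mathcal{E}}$. The repair is either to prove directly that your explicit assignment induces an equivalence onto $\filt_{\mathcal{E}}$ sending $S_i$ to $E_i$ and to take that as the definition of $F_{\mathcal{E}}$ (which suffices for all later uses of the proposition), or to follow the paper and extract $f_{\mathcal{E}}$ from a bimodule representing the functor you were handed.
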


\begin{remark} Note that this proposition already appears in the context of quivers with no relations in \cite{RieSch1}. Furthermore, the proof of Proposition 2.3 in \cite{RieSch1} works for arbitrary finite-dimensional algebras, as well. Nonetheless, for completeness, we include below an explicit proof.  
\end{remark}

\begin{proof} In what follows, we denote by $e'_i$ the primitive idempotent in $\Lambda_{\mathcal{E}}$ corresponding to vertex $i \in \{1,2,\ldots, l\}$. Furthermore, if $i$ and $j$ are two vertices of $Q_{\mathcal E}$, $p'$ is a linear combination of paths in $kQ_{\mathcal E}$ from vertex $i$ to vertex $j$, and $M' \in \rep(\Lambda_{\mathcal E})$, we define $M'(p')$ to be the corresponding linear combination of products of matrices. We also denote the residue class of $p'$ in $\Lambda_{\mathcal E}$ by $\overline{p'}$.

According to Theorem 2 in  \cite{Wat}, we can assume that $F=P\otimes_{\Lambda_{\mathcal E}} \underline{\phantom{X}}$ where $P$ is a finite-dimensional $\Lambda-\Lambda_{\mathcal{E}}$-bimodule which is projective as a right $\Lambda_{\mathcal{E}}$-module. In fact, we can write $P=\bigoplus_{v\in Q_0}e_vP$ where $e_vP=\bigoplus_{i=1}^l (e'_i\Lambda_{\mathcal{E}})^{\dd_{E_i}(v)}, \forall v \in Q_0$.

Since $P$ is a $\Lambda-\Lambda_{\mathcal E}$-bimodule, we have that for each arrow $a\in Q_1$, $P(a) \in Hom_{\Lambda_{\mathcal{E}}}(e_{ta}P,e_{ha}P)$ which, after fixing bases, can be viewed as an $l \times l$ block-matrix whose $(i,j)$-block entry is a matrix of size $\dd_{E_i}(ha)\times \dd_{E_j}(ta)$ whose entries are of the form $\overline{p'}\in \Lambda_{\mathcal{E}}$ with $p'$ a linear combination of paths from vertex $j$ to vertex $i$ in $kQ_{\mathcal{E}}$.

Now, given a representation $M' \in \rep(\Lambda_{\mathcal E},\dd')$ and an arrow $a \in Q_1$, we define $f_{\mathcal E}(M')(a)$ to be the $l\times l$ block-matrix obtained from $P(a)$ by replacing each entry $\overline{p'} \in e'_{i}\Lambda_{\mathcal E} e'_j$ by the $\dd'(i) \times \dd'(j)$ matrix $M'(p')$. As for the morphism  $\varphi$, we simply take the natural diagonal embedding of $\GL(\dd')$ into $\GL(\dd)$. The proof now follows.    

\end{proof}

For the convenience of the reader, we now recall some fundamental facts from birational invariant theory that will be used in the proof of Theorem \ref{rational-inv-quiverel-thm}. Let $G$ be a linear algebraic group acting regularly on an irreducible variety $X$. The field $k(X)^G$ of $G$-invariant rational functions on $X$ is always finitely generated over $k$ since it is a subfield of $k(X)$ which is finitely generated over $k$. A rational quotient of $X$ by (the action of) $G$ is an irreducible variety $Y$ such that $k(Y)=k(X)^G$ together with the dominant rational map $\pi:X \dashrightarrow Y$ induced by the inclusion $k(X)^G \subset k(X)$. Of course, $Y$ is uniquely defined up to birational isomorphism.

Now, a theorem of Rosenlicht \cite{Ros} tells us that there is a $G$-invariant open and dense subset $X_0$ of $X$ such that the restriction of $\pi$ to $X_0$ is a dominant regular  morphism and $\pi^{-1}(\pi(x))=Gx$ for all $x \in X_0$. Furthermore, one can show that a rational quotient $\pi:X \dashrightarrow Y$ satisfies the following universal property (see \cite[Section 2.4]{PopVin}, \cite[Remark 2.5]{Rei1}): Let $\rho:X\dashrightarrow Y'$ be a rational map such that $\rho^{-1}(\rho(x))=Gx$ for $x \in X$ in general position. Then there exists a rational map
$$
\overline{\rho}:Y \dashrightarrow Y'
$$
such that $\rho=\overline{\rho} \circ \pi$. If in addition $\rho$ is dominant then $\overline{\rho}$ becomes a birational isomorphism. One usually writes $X/G$ in place of $Y$ and call it \emph{the} rational quotient of $X$ by $G$.


We also need some facts about homogeneous fiber spaces. Let $\varphi: H \to G$ be a homomorphism of algebraic groups and let $Z$ be an $H$-variety. Consider the action of $G\times H$ on $G\times Z$ defined by $(g,h)\cdot (g',z)=(gg' \varphi(h)^{-1},hz)$ and denote by $G\ast_{H}Z$ the rational quotient of $G \times Z$ by the above action of $\{1\}\times H$. We call $X:=G\ast_{H}Z$ a \emph{homogeneous fiber space}. Note that $G$ has a naturally defined rational action on $X$ which, in general, is not regular. However, it is always possible (see \cite[Definition 2.12]{Rei1}) to choose a model $Y$ for $X$ such that the $G$-action on $G \times Z$ descends to a regular action on $Y$, and thus the rational quotient map $\pi: G \times Z \dashrightarrow Y$ is $G$-equivariant.


Let us record the following very useful result:

\begin{lemma}\cite[Lemma 6.1]{ReiVon1}\label{fiber-space} Keep the same notation as above. Then, $k(X)^G$ and $k(Z)^H$ are isomorphic as $k$-algebras. \end{lemma}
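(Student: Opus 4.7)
The plan is to exploit the fact that the two actions commute: the action of $\{1\}\times H$ (which twists the first factor and acts genuinely on the second) and the action of $G\times\{1\}$ (which is just left translation on the first factor). Because these commute, the rational quotient of $G\times Z$ by $G\times H$ can be computed iteratively in either order, and Lemma \ref{fiber-space} will fall out by comparing the two.

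First I would observe that by the very definition of $X=G\ast_H Z$, we have $k(X)=k(G\times Z)^{\{1\}\times H}$, and the $G$-action on $X$ comes from the left-translation action of $G\times\{1\}$ on the first factor. A direct calculation confirms that these two actions commute on $G\times Z$, so the $G$-action descends to a well-defined (rational) $G$-action on $X$, and its ring of invariants is
$$
k(X)^G \;=\; \bigl(k(G\times Z)^{\{1\}\times H}\bigr)^{G\times\{1\}} \;=\; k(G\times Z)^{G\times H}.
$$
Next I would compute $k(G\times Z)^{G\times H}$ by quotienting in the opposite order: first by $G\times\{1\}$, then by $\{1\}\times H$. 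The $G$-action by left translation on the first factor is free, and the second projection $\mathrm{pr}_2\colon G\times Z\to Z$ realizes $Z$ as the geometric quotient. By Rosenlicht's theorem, $k(G\times Z)^{G\times\{1\}}=\mathrm{pr}_2^{*}k(Z)\cong k(Z)$.

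The remaining step is to identify the residual $\{1\}\times H$-action on $Z$ through this isomorphism. Given $h\in H$, the twisted action sends $(g',z)\mapsto(g'\varphi(h)^{-1},hz)$; composing with $\mathrm{pr}_2$ gives $z\mapsto hz$, so the residual action is exactly the original $H$-action on $Z$. Therefore $k(G\times Z)^{G\times H} = (k(Z))^{\{1\}\times H} = k(Z)^H$, which combined with the first step yields $k(X)^G\cong k(Z)^H$ as required. The main subtlety I expect is purely book-keeping: one must be careful that the rational quotient maps appearing in both iterated computations satisfy Rosenlicht's universal property well enough that $k(G\times Z)^{G\times H}$ is unambiguously the common answer, but since both actions are regular and commute, the standard two-step quotient argument (\cite[Section 2.4]{PopVin}) goes through without issue.
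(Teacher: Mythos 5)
The paper offers no proof of this lemma; it is quoted verbatim from Reichstein--Vonessen \cite[Lemma 6.1]{ReiVon1}, so there is nothing internal to compare against. Your argument is correct and is the standard one: since $G\times\{1\}$ and $\{1\}\times H$ commute, $k(X)^G=\bigl(k(G\times Z)^{\{1\}\times H}\bigr)^{G\times\{1\}}=k(G\times Z)^{G\times H}$, and quotienting in the other order first collapses the free left-translation action on the first factor, giving $k(G\times Z)^{G\times\{1\}}=\mathrm{pr}_2^{*}k(Z)$ with residual $\{1\}\times H$-action equal to the original $H$-action on $Z$ (as your computation with $\mathrm{pr}_2$ shows). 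The only hypotheses you are using silently are that $Z$ (hence $G\times Z$) is irreducible and that the relevant invariant subfields are exchanged correctly under the commuting actions, both of which hold in the paper's application ($G=\PGL(\dd)$, $H=\PGL(\dd')$, $Z=\rep(\Lambda_{\mathcal E},\dd')$ irreducible by assumption); so the proof is complete as written.
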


Now, we are ready to prove Theorem \ref{rational-inv-quiverel-thm}.

\begin{proof}[Proof of Theorem \ref{rational-inv-quiverel-thm}] Denote by $T'_1$ the $1$-dimensional torus in $\GL(\dd')$ and by $T_1$ the $1$-dimensional torus in $\GL(\dd)$. Since $\filt_{\mathcal{E}}(\dd)$ contains a Schur $\Lambda$-module by assumption, we immediately deduce that $\varphi (T'_1) \leq T_1$. Hence, we have a well-defined action of $\PGL(\dd')$ on $\PGL(\dd)$ induced by the action $g'\cdot g=g\cdot \varphi(g')^{-1}$. 

Let us now consider the $\PGL(\dd')$-invariant morphism $\mu: \PGL(\dd) \times \rep(\Lambda_{\mathcal{E}},\dd') \to C$ induced by $f_{\mathcal{E}}$. By Theorem \ref{A-infinity-functor-open-thm}{(2)} and Proposition \ref{morphisms-prop}, we know that $C=\overline{\filt_{\mathcal{E}}(\dd)}=\overline{GL(\dd)\cdot \Ima f_{\mathcal{E}}}$, i.e., $\mu$ is a dominant morphism.

Next, let $(\overline{g},M) \in \PGL(\dd)\times \rep(\Lambda_{\mathcal{E}},\dd')$ be a generic point with $M$ a Schur $\Lambda_{\mathcal{E}}$-module. (Here, $\overline{g}$ denotes the image of $g \in \GL(\dd)$ in $\PGL(\dd)$.) Then we claim that $\mu^{-1}(\mu(\overline{g},M))=\PGL(\dd') \cdot (\overline{g},M)$. Indeed, let $(h, M') \in \GL(\dd)\times \rep(\Lambda_{\mathcal E},\dd')$ be so that $hf_{\mathcal E}(M')=gf_{\mathcal E}(M)$. In particular, $M \simeq M'$ and so there exists an $g' \in \GL(\dd')$ such that $M'=g'M$. Since $f_{\mathcal{E}}(M)$ is a Schur module, we get that $\overline{h}=\overline{g'\cdot g}$, and so $\overline{g'}(\overline{g},M)=(\overline{h}, M')$. The claim now follows. 

From the uniqueness of rational quotients, we know that $\mu$ is actually the rational quotient map for the $\PGL(\dd')$-action on $\PGL(\dd)\times \rep(\Lambda_{\mathcal{E}},\dd')$, i.e., $\mu$ descends to a birational isomorphism $$\overline{\mu}: \PGL(\dd)\ast_{\PGL(\dd')} \rep(\Lambda_{\mathcal{E}},\dd') \dashrightarrow C$$
such that $\mu=\overline{\mu}\circ \pi$, where $\pi: \PGL(\dd) \times \rep(\Lambda_{\mathcal{E}},\dd') \dashrightarrow \PGL(\dd)\ast_{\PGL(\dd')} \rep(\Lambda_{\mathcal{E}},\dd')$ is the rational quotient map. Note that as $\pi$ is $\PGL(\dd)$-equivariant, so is $\overline{\mu}$.
The proof now follows from Lemma \ref{fiber-space}.
\end{proof}

As an immediate consequence of Theorem \ref{rational-inv-quiverel-thm}, we have:

\begin{corollary} \label{rational-inv-Euclidean-coro} If $Q$ is a Euclidean quiver then $k(\rep(Q,\dd))^{\GL(\dd)} \simeq k$ or $k(t)$ for each Schur root $\dd$ of $Q$.
\end{corollary}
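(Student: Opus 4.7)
The plan is to split on the type of Schur root $\dd$. Since $Q$ is Euclidean, its Schur roots are exactly the positive real roots together with the null root $\delta$ (the unique imaginary Schur root). If $\dd$ is a real Schur root, then there is, up to isomorphism, a unique exceptional indecomposable $V\in\rep(Q,\dd)$: indeed, any Schur representation $V$ of dimension $\dd$ satisfies $\dim_k\End_{kQ}(V)=1$ and by the Euler formula $\dim_k\Ext^1_{kQ}(V,V)=1-\langle\dd,\dd\rangle=0$. Rigidity then makes the orbit $\GL(\dd)\cdot V$ open and dense in the irreducible variety $\rep(Q,\dd)$, so $k(\rep(Q,\dd))^{\GL(\dd)}$ is algebraic over $k$; being finitely generated over the algebraically closed field $k$, it must equal $k$.

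The essential case is $\dd=\delta$. I plan to apply Theorem \ref{rational-inv-quiverel-thm} with $\Lambda=kQ$, $C=\rep(Q,\delta)$ (irreducible since $kQ$ has no relations), and a carefully chosen orthogonal exceptional pair $\mathcal{E}=(E_1,E_2)$ of $kQ$-modules with $\ddim E_1+\ddim E_2=\delta$. Granted such a pair, both $\ddim E_i$ are real Schur roots with $\langle\ddim E_i,\ddim E_i\rangle=1$; the orthogonality conditions $\Hom(E_1,E_2)=\Ext^1(E_1,E_2)=\Hom(E_2,E_1)=0$ force $\langle\ddim E_1,\ddim E_2\rangle=0$, and expanding $\langle\delta,\delta\rangle=0$ then gives $\langle\ddim E_2,\ddim E_1\rangle=-2$, i.e.\ $\dim_k\Ext^1_{kQ}(E_2,E_1)=2$. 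Hence $Q_{\mathcal{E}}$ is the Kronecker quiver $K$; since $\rep(kQ)$ is hereditary all $\Ext^2$-spaces in the Yoneda algebra vanish, so every higher $A_{\infty}$-multiplication $m_n$ $(n\geq 2)$ is zero and $\Lambda_{\mathcal{E}}\simeq kK$. Taking $\dd'=(1,1)$ on $Q_{\mathcal{E}}$ we have $\dd'(1)\ddim E_1+\dd'(2)\ddim E_2=\delta$; the variety $\rep(\Lambda_{\mathcal{E}},\dd')\cong\mathbb{A}^2$ is irreducible and contains Schur representations (e.g.\ any point with both coordinates nonzero); and any non-split extension of $E_2$ by $E_1$ lies in $\rep(Q,\delta)\cap\filt_{\mathcal{E}}(\delta)$. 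Theorem \ref{rational-inv-quiverel-thm} will then yield
$$
k(\rep(Q,\delta))^{\GL(\delta)}\;\simeq\;k(\rep(K,(1,1)))^{\GL((1,1))}\;\simeq\;k(t),
$$
the last isomorphism coming from the elementary computation of the diagonal $\mathbb{G}_m$-quotient of $\mathbb{A}^2$, or equivalently from the rationality of $\M(K,(1,1))^{ss}_{\theta}\simeq\PP^1$ (cf.\ Lemma \ref{moduli-Euclidean-lemma} or Remark \ref{stable-dim-rationality-rmk}).

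The hard part will be producing the Kronecker pair $(E_1,E_2)$ with $\ddim E_1+\ddim E_2=\delta$ for an arbitrary Euclidean quiver $Q$. I plan to invoke the well-developed theory of orthogonal exceptional sequences over hereditary algebras: one selects an exceptional preprojective (or preinjective) module $E_2$ of dimension vector maximal below $\delta$ among real Schur roots for which $\delta-\ddim E_2$ is again a real Schur root, and takes $E_1$ to be the unique (up to isomorphism) exceptional module of dimension $\delta-\ddim E_2$; Schofield's perpendicular category theorem verifies the required orthogonality. A uniform alternative is to start from any full orthogonal exceptional sequence of $kQ$-modules and mutate it until two consecutive terms have dimension vectors summing to $\delta$; a type-by-type inspection of $\widetilde{\mathbb{A}}$, $\widetilde{\mathbb{D}}$, $\widetilde{\mathbb{E}}$ exhibits the pair explicitly. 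Once the pair is in hand, the rest of the argument is purely mechanical.
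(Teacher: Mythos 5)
Your proposal follows the paper's proof of Corollary \ref{rational-inv-Euclidean-coro} in all essentials: dispose of real Schur roots via the dense orbit of the exceptional module, and for the isotropic root $\delta$ reduce to the Kronecker quiver in dimension $(1,1)$ by feeding a two-term orthogonal exceptional sequence $(E_1,E_2)$ with $\ddim E_1+\ddim E_2=\delta$ into Theorem \ref{rational-inv-quiverel-thm}; your bookkeeping ($\dim_k\Ext^1_{kQ}(E_2,E_1)=2$, vanishing of the higher $A_{\infty}$-products by heredity so that $\Lambda_{\mathcal E}=kK_2$, Schurness of the generic point of $\rep(K_2,(1,1))$) all checks out. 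The one place you diverge is the step you flag as ``the hard part'': producing the pair. The paper does this in one line, with no perpendicular-category machinery and no type-by-type inspection: pick a vertex $i_0$ with $Q\setminus\{i_0\}$ Dynkin (an extending vertex, so $\delta(i_0)=1$), assume it is a source, and take $E_2=S_{i_0}$ and $E_1$ the unique exceptional module of dimension $\delta-\ee_{i_0}$. Orthogonality is then immediate: $\Hom(E_1,E_2)=\Hom(E_2,E_1)=0$ because $E_1(i_0)=0$ and $E_2$ is concentrated at $i_0$, and $\Ext^1_{kQ}(E_1,E_2)=-\langle\delta-\ee_{i_0},\ee_{i_0}\rangle=0$ because no arrow ends at the source $i_0$. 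So the existence of the Kronecker pair is not where the difficulty lies, and you could replace the appeal to Schofield's perpendicular categories or mutation with this explicit choice. One minor inaccuracy: not every positive real root of a Euclidean quiver is a Schur root, so your opening classification overstates matters; what you actually use --- that every Schur root other than $\delta$ is real --- is correct and is all that is needed.
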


\begin{proof} If $\dd$ is a real Schur root then $k(\rep(Q,\dd))^{\GL(\dd)} \simeq k$. Next, we denote by $\delta_Q$, the unique isotropic Schur root of $Q$. Choose a vertex $i_0 \in Q_0$ such that $Q \setminus \{i_0\}$ is a Dynkin quiver. Without loss of generality, let us assume that $i_0$ is a source. In this case, we can choose two exceptional representations $E_1$ and $E_2$ of $Q$ with $\ddim E_1=\delta_Q-e_{i_0}$ and $\ddim E_2=e_{i_0}$. Then,  $\mathcal{E}:=(E_1,E_2)$ is an orthogonal exceptional sequence with $$\dim_k \Ext^1_{kQ} (E_2, E_1)=2.$$ Hence, $Q(\mathcal{E})$ is the Kronecker quiver
$$K_2:\hspace{15pt}
\xy     (0,0)*{\cdot}="a";
        (10,0)*{\cdot}="b";
        {\ar@2{<-} "a";"b" };
\endxy
$$
But, for $K_2$ and dimension vector $(1,1)$, the corresponding field of rational invariants is clearly $k(t)$. The proof now follows from Theorem \ref{rational-inv-quiverel-thm}.
\end{proof}

\begin{remark} Note that the sequence $\mathcal{E}$ above corresponds to a facet of the cone $\Eff(Q,\delta)$ of effective weights associated to $(Q,\delta)$. In fact, the facets of $C(Q,\delta)$ give rise to all orthogonal exceptional sequences $\mathcal{E}$ of $Q$ of length two for which $\filt_{\mathcal{E}}(\delta)\neq \emptyset$ (see \cite[Theorem 5.1]{DW2}). We will come back to this approach in Section \ref{exceptional-effective-wts-sec}.
\end{remark}

\section{Canonical algebras}\label{canonical-sec} Let $\underline{m}=(m_1,\dots,m_n)$, $n \geq 3$, be a sequence of positive integers greater than one, and let $\underline{\lambda}=(\lambda_3, \dots, \lambda_n)$ be a sequence of pairwise distinct non-zero scalars in $k$ with $\lambda_3=1$. The canonical algebra $\Lambda=\Lambda(\underline{m},\underline{\lambda})$ is, by definition, the bound quiver algebra of the bound quiver $(\Delta(\underline{m}), R(\underline{m},\underline{\lambda}))$ where $\Delta(\underline{m})$ is the quiver:

$$
\xy (0, 0)*{\bullet}="a";(3,0)*{\infty};
        (-25, 30)*{\bullet}="a1";(-25,33)*{(1,m_1-1)};
        (-45,30)*{\bullet}="a2";
        (-55,30)*{\bullet}="a3";
        (-75,30)*{\bullet}="a4";(-75,33)*{(1,1)};
        (-100,0)*{\bullet}="b";(-103,0)*{0};
        (-25, 10)*{\bullet}="b1";(-25,13)*{(2,m_2-1)};
        (-45,10)*{\bullet}="b2";
        (-55,10)*{\bullet}="b3";
        (-75,10)*{\bullet}="b4";(-75,13)*{(2,1)};
        (-25,-20)*{\bullet}="c1";(-25,-23)*{(n,m_n-1)};
        (-45,-20)*{\bullet}="c2";
        (-55,-20)*{\bullet}="c3";
        (-75,-20)*{\bullet}="c4";(-75,-23)*{(n,1)};
        {\ar_{a_{1,m_1}} "a";"a1"};
        {\ar^{a_{1,m_1-1}} "a1";"a2"};
        {\ar@{.} "a2";"a3"};
        {\ar^{a_{1,2}} "a3";"a4"};
        {\ar^{a_{2,m_2}} "a";"b1"};
        {\ar^{a_{2,m_2-1}} "b1";"b2"};
        {\ar@{.} "b2";"b3"};
        {\ar^{a_{2,2}} "b3";"b4"};
        {\ar^{a_{n,m_n}} "a";"c1"};
        {\ar_{a_{n,m_n-1}} "c1";"c2"};
        {\ar@{.} "c2";"c3"};
        {\ar_{a_{n,2}} "c3";"c4"};
        {\ar_{a_{1,1}} "a4";"b"};
        {\ar^{a_{2,1}} "b4";"b"};
        {\ar^{a_{n,1}} "c4";"b"};
    \endxy
$$

and $R(\underline{m},\underline{\lambda})$ consists of the following relations:
$$
a_{1,1}a_{1,2}\dots a_{1,m_1}+\lambda_i a_{2,1}a_{2,2}\dots a_{2,m_2}-a_{i,1}a_{i,2}\dots a_{i,m_i}, 3 \leq i \leq n.
$$

The virtual genus of $\Lambda$, denoted by $g_{\Lambda}$, is
$$g_{\Lambda}=1+{1 \over 2}m(n-2-{1 \over m_1}-\dots-{1 \over m_n}),$$
where $m=lcm\{m_1,\ldots, m_n\}$. Note that $g_{\Lambda}$ is always non-negative. Moreover, the virtual genus $g_{\Lambda}$ controls the representation type of $\Lambda$ in the following way (see \cite{R3} or \cite[Section 7]{BobRieSko}).
\begin{enumerate}
\renewcommand{\theenumi}{\arabic{enumi}}
\item If $g_{\Lambda}=0$ then $n=3$ and $\underline{m}$ is one of the following triples $(l-2,2,2)$ with $l \geq 4$, $(3,3,2)$, $(4,3,2)$ or $(5,3,2)$, i.e., $\Delta \setminus \{\infty\}$ is a Dynkin quiver of type $\mathbb{D}$ or $\mathbb{E}$. In this case, $\Lambda$ is a concealed algebra of extended Dynkin type $\widetilde{\mathbf{D}}_l$, $\widetilde{\mathbf{E}}_6$, $\widetilde{\mathbf{E}}_7$, or $\widetilde{\mathbf{E}}_8$ (see for example the D. Happel-D. Vossieck's list in \cite{HapVos}). 
\item If $g_{\Lambda}=1$ (or equivalently, $0<g_{\Lambda} \leq 1$) then $\underline{m}$ is one of the following four tuples $(2,2,2,2)$, $(3,3,3)$, $(4,4,2)$, and $(6,3,2)$, i.e., $\Delta \setminus \{\infty\}$ is an extended Dynkin diagram of type $\widetilde{\mathbb{D}}$ or $\widetilde{\mathbb{E}}$. In this case, we call $\Lambda$ a \emph{tubular canonical algebra}. The classification of the indecomposable modules over a tubular canonical algebra turns out to be closely related to Atiyah's \cite{Ati} classification of indecomposable bundles over an elliptic curve.

\item $\Lambda$ is wild if and only if $g_{\Lambda}>1$.
\end{enumerate}

In what follows, we briefly recall some of the key features of canonical algebras (see for example \cite{R3} or  \cite{LenMel}). First of all, $\Lambda$ has global dimension two. In particular, the Tits form $q_{\Lambda}$ coincides with $\chi_{\Lambda}$. The \emph{rank} and \emph{degree} of a dimension vector $\dd$ of $\Lambda$, denoted by $\rk_{\Lambda}(\dd)$ and $\ddeg_{\Lambda}(\dd)$ respectively, are
$$
\rk_{\Lambda}(\dd)=\dd(0)-\dd(\infty)
$$
and
$$
\ddeg_{\Lambda}(\dd)=\sum_{i=1}^{n}{m \over m_i} \left( \sum_{j=1}^{m_i-1} \dd(i,j) \right)-\left((n-1)m-\sum_{i=1}^{n}{m \over m_i}  \right) \dd(\infty).
$$
We denote by $\hh$ the dimension vector of $\Lambda$ that takes value $1$ at all vertices of $\Delta_0$. It turns out that $\rk_{\Lambda}(\dd)=\langle \dd,\hh \rangle_{\Lambda}=-\langle \hh,\dd \rangle_{\Lambda}$ for any dimension vector $\dd$ of $\Lambda$.

Let $\mathcal P$ ($\mathcal R, \mathcal Q$, respectively) be the full subcategory of $\module(\Lambda)$ consisting of all $\Lambda$-modules that are direct sums of indecomposable $\Lambda$-modules $X$ such that $\rk_{\Lambda}(\ddim X)>0$ ($\rk_{\Lambda}(\ddim X)=0, \rk_{\Lambda}(\ddim X)<0$, respectively). The following properties hold true.
\begin{enumerate}
\renewcommand{\theenumi}{\roman{enumi}}
\item $\module(\Lambda)=\mathcal P \bigvee \mathcal R \bigvee \mathcal Q$.
\item $\Hom_{\Lambda}(N,M)=\Ext^1_{\Lambda}(M,N)=0$ if either $N \in \mathcal R \bigvee \mathcal Q, M \in \mathcal P$ or  $N \in \mathcal Q, M \in \mathcal P \bigvee \mathcal R$.
\item $pd_{\Lambda} M\leq 1$ for all $M \in \mathcal P \bigvee \mathcal R$ and $id_{\Lambda} N \leq 1$ for all $N \in \mathcal R \bigvee \mathcal Q$.
\end{enumerate}


We end this subsection with the Riemann-Roch formula for canonical algebras due to Geigle-Lenzing \cite{GeiLen} (see also \cite{LenMel} and \cite{Len}). Denote by $\Phi_{\Lambda}$ the Coxeter matrix of $\Lambda$. The Riemann-Roch formula tells us that for any two dimension vectors $\dd$ and $\ee$ of $\Lambda$:

\begin{equation} \label{RR-formula}
\sum_{i=0}^{m-1}\langle \Phi_{\Lambda}^i \dd,\ee \rangle_{\Lambda}=m(1-g_{\Lambda})\rk_{\Lambda}(\dd)\rk_{\Lambda}(\ee)+\det \left(
\begin{matrix}
\rk_{\Lambda}(\dd)&\rk_{\Lambda}(\ee)\\
\ddeg_{\Lambda}(\dd)&\ddeg_{\Lambda}(\ee)
\end{matrix}
\right).
\end{equation}

\subsection{Irreducible components for tame canonical algebras} In this section, we review some results of Bobi{\'n}ski and Skowro{\'n}ski \cite{BS1}, and of Geiss and Schr{\"o}er \cite{GeiSch} on the indecomposable irreducible components for a tame canonical algebra $\Lambda$.

\begin{theorem}\label{BS-GS-irr-comp-thm} Let $\Lambda$ be a tame canonical algebra and let $\dd$ be a generic root of $\Lambda$. Then $\dd$ is an indivisible Schur root of $\Lambda$, $q_{\Lambda}(\dd)\in \{0,1\}$, and $\rep(\Lambda, \dd)$ has a unique indecomposable irreducible component. More precisely, if $q_{\Lambda}(\dd)=1$ then there exists a unique $\dd$-dimensional exceptional $\Lambda$-module $M$ and $\overline{\GL(\dd)M}$ is the unique indecomposable irreducible component of $\rep(\Lambda,\dd)$. If $q_{\Lambda}(\dd)=0$ then $\rep(\Lambda,\dd)$ is irreducible.
\end{theorem}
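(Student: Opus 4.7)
The plan is to combine Theorem \ref{q-tilted-and-strongly-s-c-Tits-thm} with the fine structure $\module(\Lambda)=\mathcal P\vee\mathcal R\vee\mathcal Q$ of a tame canonical algebra. Since $\Lambda$ is quasi-tilted, the tameness hypothesis forces $q_\Lambda$ to be weakly positive semi-definite, and hence $q_\Lambda(\dd)\geq 0$ for every dimension vector. Because $\Lambda$ has global dimension two, the Tits form agrees with the Euler form, so for any $\dd$-dimensional module $M$ one has
$$q_\Lambda(\dd)=\dim_k\End_\Lambda(M)-\dim_k\Ext^1_\Lambda(M,M)+\dim_k\Ext^2_\Lambda(M,M).$$

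Next, fix an indecomposable irreducible component $C\subseteq\rep(\Lambda,\dd)$ and a generic $M\in C$. Krull--Schmidt places the indecomposable $M$ in exactly one of $\mathcal P$, $\mathcal R$, or $\mathcal Q$, and the projective-dimension bound on $\mathcal P\vee\mathcal R$ together with the injective-dimension bound on $\mathcal R\vee\mathcal Q$ forces $\Ext^2_\Lambda(M,M)=0$, so the identity above reduces to $q_\Lambda(\dd)=\dim_k\End_\Lambda(M)-\dim_k\Ext^1_\Lambda(M,M)$. Combining the Krull principal-ideal estimate
$$\dim C\geq \dim\GL(\dd)\cdot M+\dim_k\Ext^1_\Lambda(M,M)-\dim_k\Ext^2_\Lambda(M,M)$$
with the tameness inequality $\dim C\leq \dim\GL(\dd)$ from Proposition \ref{delaPena-tame-irr-prop} and the identity $\dim\GL(\dd)\cdot M=\dim\GL(\dd)-\dim_k\End_\Lambda(M)$ yields $\dim_k\Ext^1_\Lambda(M,M)\leq \dim_k\End_\Lambda(M)$, which gives a direct homological interpretation of $q_\Lambda(\dd)$ at $M$.

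I would then run a case analysis based on where $M$ sits, using the known classification of indecomposables for tame canonical algebras: the tame concealed Auslander--Reiten structure when $g_\Lambda=0$, and the Geigle--Lenzing equivalence with coherent sheaves on a weighted projective line together with Atiyah's classification when $g_\Lambda=1$. A generic indecomposable $M$ of dimension $\dd$ is either (i) exceptional, in which case $\End_\Lambda(M)=k$ and $\Ext^1_\Lambda(M,M)=0$, so $\dd$ is a real Schur root, $q_\Lambda(\dd)=1$, the module $M$ is unique up to isomorphism, and $C=\overline{\GL(\dd)\cdot M}$; or (ii) a non-rigid regular module with $\End_\Lambda(M)=k$ and $\dim_k\Ext^1_\Lambda(M,M)=1$, so $\dd$ is an indivisible isotropic Schur root, $q_\Lambda(\dd)=0$, and the $\dd$-dimensional indecomposables sweep out an open dense subset of $\rep(\Lambda,\dd)$ under a one-parameter family of $\GL(\dd)$-orbits.

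The hard part is finishing case (ii): one must prove that $\rep(\Lambda,\dd)$ is actually irreducible rather than merely having a single indecomposable component. This requires ruling out additional components coming from decomposable degenerations of $M$ into regulars from distinct tubes, or into preprojective-plus-preinjective combinations of the same total dimension; the argument is a dimension comparison showing every such stratum lies in the closure of the one-parameter family of indecomposables. The indivisibility of $\dd$ in case (ii) then follows, since a factorization $\dd=k\dd_0$ with $k\geq 2$ would force a generic module of dimension $\dd$ to split as a direct sum of $k$ regulars from distinct tubes, contradicting the indecomposability of $C$. These last verifications are precisely the contributions of Bobi\'nski--Skowro\'nski in the Euclidean case and of Geiss--Schr\"oer in the tubular case.
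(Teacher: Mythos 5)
First, a point of comparison: the paper offers no proof of this theorem at all. It is stated explicitly as a review of results of Bobi\'nski--Skowro\'nski \cite{BS1} and Geiss--Schr\"oer \cite{GeiSch}, so there is no in-paper argument to measure yours against. Judged on its own terms, your homological skeleton is sound and worth recording: canonical algebras are quasi-tilted, so Theorem \ref{q-tilted-and-strongly-s-c-Tits-thm} gives $q_{\Lambda}(\dd)\geq 0$; the trisection $\mathcal P\vee\mathcal R\vee\mathcal Q$ together with $pd_{\Lambda}M\leq 1$ on $\mathcal P\vee\mathcal R$ and $id_{\Lambda}M\leq 1$ on $\mathcal R\vee\mathcal Q$ does force $\Ext^2_{\Lambda}(M,M)=0$ for indecomposable $M$; and the Krull estimate combined with Proposition \ref{delaPena-tame-irr-prop} correctly yields $\dim_k\Ext^1_{\Lambda}(M,M)\leq\dim_k\End_{\Lambda}(M)$ for a generic $M$ in an indecomposable component.

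However, the proposal has genuine gaps exactly where the theorem has content. First, to pass from $\dim_k\Ext^1_{\Lambda}(M,M)\leq\dim_k\End_{\Lambda}(M)$ to $q_{\Lambda}(\dd)\in\{0,1\}$ you need $\End_{\Lambda}(M)=k$, i.e., that every indecomposable irreducible component of $\rep(\Lambda,\dd)$ is a Schur component; nothing in your dimension count bounds $\dim_k\End_{\Lambda}(M)$ from above, and this Schur property is itself one of the nontrivial conclusions of \cite{BS1,GeiSch}, invoked here as an input under the phrase ``the known classification.'' Second, uniqueness of the indecomposable component in the real case requires both that a $\dd$-dimensional exceptional module is unique up to isomorphism and that no second indecomposable component with the same dimension vector exists; you assert this without argument. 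Third, the irreducibility of $\rep(\Lambda,\dd)$ in the isotropic case and the indivisibility of $\dd$ there are explicitly deferred to the cited papers. So what you have is a correct framing of the numerical constraints plus a citation of the actual theorem for its decisive steps. That is defensible --- and in fact matches the paper, which cites rather than proves --- but it is not a self-contained proof.
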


\begin{remark} Note that when $\Lambda$ is tame concealed the only Schur root $\dd$ for which $q_{\Lambda}(\dd)=0$ is precisely $\hh$ (see for example \cite{HapVos}). Moreover, $\hh$ generates the radical of $\chi_{\Lambda}$ in this case.
\end{remark}

The dimension vector of an indecomposable $\Lambda$-module is called a \emph{root} of $\Lambda$. A root $\dd$ of $\Lambda$ is  said to be \emph{real} if $q_{\Lambda}(\dd)=1$. We call the root $\dd$ of $\Lambda$ \emph{isotropic} $q_{\Lambda}(\dd)=0$. 

Using the Riemann-Roch formula $(\ref{RR-formula})$ and the fact that the Schur roots of tame canonical algebras are indivisible, one can show:

\begin{lemma}\label{iso-RR-lemma} Let $\Lambda$ be a tubular canonical algebra. If $\dd$ is an isotropic Schur root of $\Lambda$ then $\Phi_{\Lambda} \dd=\dd$. In particular,
$$
\langle \dd,\ee \rangle_{\Lambda}={1 \over m} \det \left(
\begin{matrix}
\rk_{\Lambda}(\dd)&\rk_{\Lambda}(\ee)\\
\ddeg_{\Lambda}(\dd)&\ddeg_{\Lambda}(\ee)
\end{matrix}
\right),$$
for all dimension vectors $\ee$ of $\Lambda$.
\end{lemma}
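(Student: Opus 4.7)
The plan splits into two steps: establishing $\Phi_\Lambda \dd = \dd$, and then substituting this into the Riemann-Roch formula (\ref{RR-formula}) to read off the identity. The second step is the easy one: for tubular $\Lambda$ one has $g_\Lambda = 1$, so the term $m(1-g_\Lambda)\rk_\Lambda(\dd)\rk_\Lambda(\ee)$ disappears, and once $\Phi_\Lambda \dd = \dd$ is in hand every summand $\langle \Phi_\Lambda^i \dd, \ee \rangle_\Lambda$ on the left equals $\langle \dd, \ee \rangle_\Lambda$, so the sum collapses to $m\langle \dd, \ee \rangle_\Lambda$; dividing by $m$ yields the claimed formula.

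The heart of the argument is therefore to prove $\Phi_\Lambda \dd = \dd$. I would first invoke Theorem \ref{BS-GS-irr-comp-thm}: since $\Lambda$ is tame and $\dd$ is an isotropic Schur root, $\dd$ is indivisible and $\rep(\Lambda,\dd)$ is irreducible. A generic module $M \in \rep(\Lambda,\dd)$ is then indecomposable and Schur, and by a dimension-count it is neither preprojective nor preinjective, so it must be regular. Appealing to the structure theory of tubular canonical algebras (Ringel, Lenzing--Meltzer), the regular part of $\module(\Lambda)$ decomposes as a one-parameter family of tubular families $\{\mathcal T_q\}$. In an exceptional tube of period $p>1$ the Schur quasi-simples carry \emph{real} dimension vectors (with $q_\Lambda = 1$); only quasi-simples of the homogeneous tubes (period one) produce isotropic Schur dimension vectors. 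Since $\dd$ is indivisible and isotropic, the Schur module $M$ must therefore lie in a homogeneous tube, giving $\tau M \cong M$ and consequently $\Phi_\Lambda \dd = \ddim \tau M = \ddim M = \dd$.

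The main obstacle will be the step placing $M$ inside a homogeneous tube; this draws on the fine classification of indecomposables in tubular families and on the distinction between real and isotropic Schur dimension vectors at the mouths of tubes. A more self-contained alternative would be to use that $q_\Lambda$ is positive semi-definite with a two-dimensional radical for tubular $\Lambda$; then the isotropic vector $\dd$ automatically lies in $\mathrm{rad}(q_\Lambda)$, and one checks directly, via the Auslander--Reiten formula for the global-dimension-two algebra $\Lambda$, that the Coxeter transformation $\Phi_\Lambda$ restricts to the identity on $\mathrm{rad}(q_\Lambda)$. Either route requires a genuinely additional structural input beyond the Riemann-Roch formula itself in order to secure $\Phi_\Lambda \dd = \dd$; Riemann-Roch alone yields only relations of the form $\sum_{i} \langle \Phi_\Lambda^i \dd, \ee \rangle_\Lambda = \det(\cdots)$, which do not separate the individual terms.
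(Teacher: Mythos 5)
Your reduction of the displayed formula to the single claim $\Phi_{\Lambda}\dd=\dd$ via the Riemann--Roch formula with $g_{\Lambda}=1$ is exactly what the paper does, and it is correct. For the claim itself, however, you take a genuinely different route. The paper stays at the level of the Grothendieck group: using $\Phi_{\Lambda}^m=\mathrm{id}$ on dimension vectors of a tubular algebra, it forms the $\Phi_{\Lambda}$-average $\iso(\dd)=\frac{r(\dd)}{m\cdot l(\dd)}\sum_{j=0}^{m-1}\Phi_{\Lambda}^j\dd$, which is an indivisible isotropic root fixed by $\Phi_{\Lambda}$; Riemann--Roch gives $\langle \iso(\dd),\dd\rangle_{\Lambda}=0$, and two orthogonal isotropic roots of a tubular algebra are proportional, hence $\iso(\dd)=\dd$ by indivisibility. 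Your argument instead realizes $\dd$ by a $\tau$-periodic module of period one, invoking the classification of indecomposables over tubular algebras. Both work; the paper's version needs less of the module-theoretic structure theory (only $\Phi_{\Lambda}^m=\mathrm{id}$ and the proportionality of orthogonal isotropic roots), while yours is more conceptual and matches the picture the paper itself uses later in Lemma \ref{lemma-dim-Eff}. Two points in your version deserve tightening. First, your assertion that only quasi-simples of homogeneous tubes have isotropic Schur dimension vectors is not quite accurate: an indecomposable of quasi-length $p$ in an exceptional tube of rank $p$ is also a Schur module with isotropic dimension vector. What you actually need is that the \emph{generic} module in the irreducible variety $\rep(\Lambda,\dd)$ lies in a homogeneous tube; this follows, e.g., because each tubular family contains only finitely many exceptional tubes, so they contribute only finitely many orbits, which cannot dominate a component whose rational quotient has transcendence degree one. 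Second, the step $\ddim\tau M=\Phi_{\Lambda}\ddim M$ is not automatic for an algebra of global dimension two; it requires $pd_{\Lambda}M\leq 1$ and $\Hom_{\Lambda}(M,\Lambda)=0$, which do hold here because $M$ is regular (properties (ii)--(iii) of the trisection $\mathcal P\vee\mathcal R\vee\mathcal Q$), but this should be said.
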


\begin{remark} Note that the condition $\Phi_{\Lambda}\dd=\dd$ is equivalent to $\langle \dd,\ee \rangle_{\Lambda}+\langle \ee,\dd \rangle_{\Lambda} =0$ for all $\ee \in \ZZ^{\Delta_0}$, i.e., $\dd$ is in the radical of $\chi_{\Lambda}$.
\end{remark}

\begin{proof} Since $\Lambda$ is tubular, we know that $\Phi_{\Lambda}^m \dd'=\dd'$ for any dimension vectors $\dd'$ of $\Lambda$ (see for example \cite{Len}). Given a dimension vector $\dd'$ of $\Lambda$, set $r(\dd')=\min \{ i\geq 1 \mid \Phi_{\Lambda}^i\dd'=\dd' \}$ and $l(\dd')=g.c.d\{\left( \sum_{i=0}^{r(\dd')-1}\Phi_{\Lambda}^i\dd'  \right)(v)\mid v \in \Delta_0  \}$. Then, $\iso(\dd'):={r(\dd') \over m \cdot l(\dd')}\sum_{j=0}^{m-1}\Phi_{\Lambda}^j\dd'$ is an indivisible isotropic Schur root of $\Lambda$ such that $\Phi_{\Lambda} \iso(\dd')=\iso(\dd')$.

Using the Riemann-Roch formula $(\ref{RR-formula})$, we obtain that $\langle \iso(\dd),\dd \rangle_{\Lambda} =0$. It now follows from the general theory of tubular algebras that the two isotropic roots $\iso(\dd)$ and $\dd$ are multiple of each other, and so they must be equal as they are both indivisible. The proof now follows.
\end{proof}

\subsection{Exceptional sequences from cones of effective weights}\label{exceptional-effective-wts-sec} In what follows we provide a systematic approach to finding ``convenient'' orthogonal exceptional sequences of modules. This approach is based on the notion of $\theta$-stable decomposition of dimension vectors in irreducible components of module varieties. From this point on until Lemma \ref{lemma-dim-Eff} below,  $\Lambda=kQ/\langle R \rangle$ is the bound quiver algebra of an arbitrary bound quiver $(Q,R)$, $\dd$ is a dimension vector of $\Lambda$, and $\theta \in \RR^{Q_0}$. Recall that a module $M \in \rep(\Lambda,\dd)$ is said to be \emph{$\theta$-semi-stable} if $\theta(\ddim M)=0$ and $\theta(\ddim M') \leq 0$ for all submodules $M' \subseteq M$. We say that $M$ is \emph{$\theta$-stable} if  $\theta(\ddim M)=0$ and $\theta(\ddim M') < 0$ for all proper submodules $\{0\} \subset M' \subset M$. Denote by $\rep(\Lambda)^{s(s)}_{\theta}$ the full subcategory of $\rep(\Lambda)$ consisting of all $\theta$-(semi-)stable $\Lambda$-modules. Then, $\rep(\Lambda)^{ss}_{\theta}$ is an abelian subcategory of $\rep(\Lambda)$ which is closed under extensions, and whose simple objects are precisely the $\theta$-stable $\Lambda$-modules. Moreover, $\rep(\Lambda)^{ss}_{\theta}$ is Artinian and Noetherian, and hence, every $\theta$-semi-stable finite-dimensional $\Lambda$-module has a Jordan-H{\"o}lder filtration in $\rep(Q)^{ss}_{\theta}$.

Now, let $C$ be an irreducible component of $\rep(\Lambda,\dd)$. For a real weight $\theta \in \RR^{Q_0}$, we define $C^{s(s)}_{\theta}=\{M \in C \mid M \text{~is~} \theta\text{-(semi-)stable} \}$. Next, the cone of effective weights of $C$ is, by definition, the set
$$
\Eff(C)=\{\theta \in \RR^{Q_0}\mid C^{ss}_{\theta}\neq \emptyset \}.
$$

We know that there are only finitely many GIT-classes in the cone $\Eff(Q,\dd)$ of effective weights associated to $(Q,\dd)$ (see for example \cite{ArHa2} or \cite{CC5}). Among a set of representatives for these GIT-classes, we denote by $\theta_1,\ldots, \theta_l$ the integral weights for which the corresponding semi-stable loci in $C$ are non-empty. Note that for any $\theta \in \Eff(C)$, $C^{ss}_{\theta}$ is open in $C$. Moreover, for any representation $M \in \bigcap_{i=1}^l C^{ss}_{\theta_i}$, we have
$$
\Eff(C)=\{\theta \in \RR^{Q_0} \mid \theta(\dd)=0 \text{~and~} \theta(\dd_{M'})\leq 0, \forall M'\subseteq M\},
$$
and so $\Eff(C)$ is a rational convex polyhedral cone.

Let $\theta$ be a lattice point in $\Eff(C)$. For each sequence $\mathcal D=(\dd_1,\ldots, \dd_t)$ of dimension vectors of $\theta$-stable $\Lambda$-modules, consider the subset $C_{\mathcal{D}}$ of $C$ consisting of all $\Lambda$-modules $M \in C^{ss}_{\theta}$ that have a Jordan-H{\"o}lder filtration $\{0\}=M_0 \subset M_1 \subset \ldots \subset M_t=M$ in $\rep(\Lambda)^{ss}_{\theta}$ such that the sequence $(\ddim M_0,\ddim M_1/M_0, \ldots, \ddim M/M_{t-1})$ is the same as $\mathcal D$ up to permutation. It is not difficult to see that $C_{\mathcal D}$ is a constructible subset of $C$ (see for example \cite{C-BS}). Since $C^{ss}_{\theta}$ is irreducible, we deduce that there exists a unique, up to permutation, such sequence $\mathcal D=(\dd_1,\ldots,\dd_t)$ for which $C_{\mathcal D}$ contains an open and dense subset of $C$. We write
$$
\dd=\dd_1\pp \ldots \pp \dd_t,
$$
and call it the \emph{$\theta$-stable decomposition} of $\dd$ in $C$.

In what follows, for a given $\dd \in \RR^{Q_0}$, we denote by $\mathbf H(\dd)$ the hyperplane in $\RR^{Q_0}$ orthogonal to $\dd$, i.e., $\mathbf H(\dd)=\{\theta \in \RR^{Q_0} \mid \theta(\dd)=0\}$.

\begin{lemma}\label{face-Eff-lemma} Let $\mathcal F$ be a face of $\Eff(C)$ of positive dimension and let $\theta_0 \in \rel \Eff(C) \cap \ZZ^{Q_0}$. If
$\dd=m_1\cdot \dd_1 \pp \ldots m_t \cdot \dd_t$ is the $\theta_0$-stable decomposition of $\dd$ in $C$ with $\dd_i\neq \dd_j, \forall 1 \leq i<j \leq t$, then
$$
\mathcal F=\Eff(C) \cap \bigcap_{i=1}^t \mathbf H(\dd_i).
$$
\end{lemma}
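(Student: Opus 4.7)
The plan is to establish the two inclusions separately, after first interpreting the hypothesis as $\theta_0 \in \rel \mathcal{F} \cap \ZZ^{Q_0}$ (otherwise the right-hand side would not depend on $\mathcal{F}$ at all, which is incompatible with the conclusion). The two main ingredients will be the polyhedral structure of $\Eff(C)$ together with a convexity argument built around a generic $\theta_0$-polystable module in $C$.

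For the forward inclusion $\mathcal{F} \subseteq \Eff(C) \cap \bigcap_i \mathbf{H}(\dd_i)$, the key observation is that each $\dd_i$ is the dimension vector of a $\theta_0$-stable module, hence $\theta_0(\dd_i) = 0$, so $\theta_0 \in \mathbf{H}(\dd_i)$ for every $i$. Since $\theta_0$ lies in the relative interior of $\mathcal{F}$, a linear equation satisfied at $\theta_0$ must be satisfied on all of $\mathcal{F}$ (otherwise $\mathcal{F} \cap \mathbf{H}(\dd_i)$ would be a proper face of $\mathcal{F}$ containing $\theta_0$ in its relative interior, a contradiction). Thus $\mathcal{F} \subseteq \mathbf{H}(\dd_i)$ for all $i$, and $\mathcal{F} \subseteq \Eff(C)$ is automatic.

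For the reverse inclusion, I would take an arbitrary $\theta \in \Eff(C) \cap \bigcap_i \mathbf{H}(\dd_i)$ and exhibit a short segment inside $\Eff(C)$ through $\theta_0$ whose endpoint in one direction is $\theta$. Set $\theta' = (1+\epsilon)\theta_0 - \epsilon\theta$ for small $\epsilon > 0$, fix a generic $\theta_0$-polystable module $M = \bigoplus_{i=1}^{t} X_i^{m_i} \in C$ with each $X_i$ a $\theta_0$-stable module of dimension $\dd_i$, and verify that $M$ is $\theta'$-semi-stable. For each submodule $N \subseteq M$ one of two cases occurs. If $\theta_0(\ddim N) < 0$, then $\theta'(\ddim N) < 0$ for $\epsilon$ small enough. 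If $\theta_0(\ddim N) = 0$, then $N$ is $\theta_0$-semi-stable as a submodule of the semi-stable $M$, so it is a subobject of $M$ in the abelian subcategory $\rep(\Lambda)^{ss}_{\theta_0}$; since $M$ is semisimple there with simple factors among $X_1, \ldots, X_t$, $N$ must be isomorphic to $\bigoplus_i X_i^{n_i}$ with $0 \leq n_i \leq m_i$, giving $\ddim N = \sum_i n_i \dd_i$ and hence $\theta(\ddim N) = 0 = \theta'(\ddim N)$. Since only finitely many dimension vectors of submodules of $M$ occur, a uniform $\epsilon$ works, so $M$ is $\theta'$-semi-stable and $\theta' \in \Eff(C)$.

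Once $\theta' \in \Eff(C)$ is established, the identity $\theta_0 = \tfrac{1}{1+\epsilon}\theta' + \tfrac{\epsilon}{1+\epsilon}\theta$ exhibits $\theta_0$ as a point in the relative interior of the segment $[\theta', \theta]$ lying in $\Eff(C)$; since $\theta_0$ lies in the relative interior of the face $\mathcal{F}$, the standard characterization of faces forces both endpoints of this segment into $\mathcal{F}$, yielding $\theta \in \mathcal{F}$. The main obstacle I anticipate is the submodule analysis in the previous paragraph, specifically the assertion that every $\theta_0$-semi-stable submodule of the polystable $M$ has dimension vector in the $\NN$-span of $\{\dd_1, \ldots, \dd_t\}$; this rests squarely on the abelian-category structure of $\rep(\Lambda)^{ss}_{\theta_0}$ and the semisimplicity of $M$ within it, so some care is required to transport the subobject description back to ordinary $\Lambda$-submodules.
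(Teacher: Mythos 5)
You read the hypothesis correctly as $\theta_0 \in \rel\mathcal F \cap \ZZ^{Q_0}$ (the paper's later application of the lemma confirms this is the intended statement), and your reverse inclusion $\Eff(C)\cap\bigcap_i\mathbf H(\dd_i)\subseteq\mathcal F$ is sound: the segment argument through the generic $\theta_0$-polystable module $M$, plus the identification of every submodule $N\subseteq M$ with $\theta_0(\ddim N)=0$ as a subobject of the semisimple object $M$ in $\rep(\Lambda)^{ss}_{\theta_0}$ (hence $\ddim N\in\sum_i\NN\dd_i$), is correct and is a genuinely different route from the paper's, which instead writes $\mathcal F$ as the intersection of $\Eff(C)$ with the hyperplanes of exactly those submodule constraints $\theta(\ddim M')\le 0$ that are tight at $\theta_0$, and checks that each such hyperplane contains $\bigcap_i\mathbf H(\dd_i)$.

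The forward inclusion, however, has a genuine gap. You assert that since $\theta_0(\dd_i)=0$ and $\theta_0\in\rel\mathcal F$, the equation $\theta(\dd_i)=0$ holds on all of $\mathcal F$. For a general linear functional this is false: a hyperplane through a relative interior point of a positive-dimensional convex set typically cuts it in two. Your parenthetical justification presupposes that $\mathcal F\cap\mathbf H(\dd_i)$ is a \emph{face} of $\mathcal F$, which requires $\theta\mapsto\theta(\dd_i)$ to be sign-definite on $\mathcal F$; but given that it vanishes at the interior point $\theta_0$, sign-definiteness on $\mathcal F$ is equivalent to identical vanishing on $\mathcal F$ --- the very thing to be proved --- so the argument is circular. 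Nor can the sign-definiteness be imported from the ambient cone: the individual $\dd_i$ need not define supporting hyperplanes of $\Eff(C)$. For the equioriented $\mathbb{A}_4$ quiver with $\dd=(1,1,1,1)$ and $\theta_0=(0,1,-1,0)$, the $\theta_0$-stable decomposition is $(0,0,0,1)\pp(0,1,1,0)\pp(1,0,0,0)$, and the functional $\theta\mapsto\theta(2)+\theta(3)$ attached to the middle factor takes both signs on $\Eff(C)$ (compare $(1,1,0,-2)$ and $(2,-1,0,-1)$). What closes this direction is the structural input you did not use and the paper does: the partial sums $\ddim M_j$ along a Jordan--H\"older filtration of a \emph{generic} module in $C$ are dimension vectors of submodules of that generic module, so $\theta(\ddim M_j)\le 0$ on all of $\Eff(C)$; a functional that is $\le 0$ on $\mathcal F$ and vanishes at $\theta_0\in\rel\mathcal F$ vanishes on $\mathcal F$, and taking differences of consecutive partial sums yields $\mathcal F\subseteq\mathbf H(\dd_i)$.
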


\begin{proof} It follows from the discussion above that we can always choose a module $M \in C^{ss}_{\theta_0}$ such that
\begin{itemize}
\item $\Eff(C)=\{\theta \in \RR^{Q_0} \mid \theta(\dd)=0 \text{~and~} \theta(\dd_{M'})\leq 0, \forall M' \subseteq M\}$, and

\item  $M$ has a Jordan-H{\"o}lder filtration $\{0\}=M_0 \subset M_1 \subset \ldots \subset M_N=M$ in $\rep(\Lambda)^{ss}_{\theta_0}$ such that the sequence $(\ddim M_0,\ddim M_1/M_0, \ldots, \ddim M/M_{N-1})$ is the same as $(\dd_1^{m_1},\ldots, \dd_t^{m_t})$ up to permutation. (Here, $N=m_1+\ldots+m_t$.)
\end{itemize}

In particular, we get that
$$
\mathcal F=\Eff(C)\cap \bigcap \{\theta \in \RR^{Q_0} \mid \theta(\ddim M')=0\},
$$
where the union is over all submodules $M'$ of $M$ for which $\theta_0(\ddim M')=0$.

Now, we clearly have that $\theta_0(\ddim M_i)=0, \forall 1 \leq i \leq t$, and so $\mathcal F \subseteq \Eff(C) \cap \bigcap_{i=1}^t \mathbf H(\dd_i)$. To show the other inclusion, first note that if $M' \subseteq M$ is a submodule such that $\theta_0(\ddim M')=0$ then $M'$ is $\theta_0$-semi-stable, and using the uniqueness of the factors of a Jordan-H{\"o}lder filtration in $\rep(\Lambda)^{ss}_{\theta_0}$, we deduce that $\ddim M'$ is a linear combination of some of $\dd_1,\ldots, \dd_t$. The other inclusion now follows.
\end{proof}

We also have the following useful lemma:

\begin{lemma}\label{lemma-dim-Eff} Let $\Lambda$ be a tame canonical algebra and let $\dd$ be an isotropic Schur root of $\Lambda$. Then, $\rep(\Lambda,\dd)^{s}_{\theta_{\dd}} \neq \emptyset$ where $\theta_{\dd}$ denotes the weight $\langle \dd, \cdot \rangle_{\Lambda}$.
\end{lemma}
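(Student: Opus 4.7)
The plan is to take a generic module $M$ of the irreducible variety $\rep(\Lambda,\dd)$ (irreducible by Theorem \ref{BS-GS-irr-comp-thm}) and to verify that it is $\theta_{\dd}$-stable; note that $\theta_{\dd}(\dd)=\langle\dd,\dd\rangle_{\Lambda}=q_{\Lambda}(\dd)=0$ is automatic by isotropy, so only the strict inequality on proper non-zero submodules requires work.

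The first step is to pin down $M$. By Theorem \ref{BS-GS-irr-comp-thm}, a generic $M\in\rep(\Lambda,\dd)$ is Schur and indecomposable. Since $\End_{\Lambda}(M)=k$ and $q_{\Lambda}(\dd)=0$, the rational quotient has transcendence degree $1$, so generic isomorphism classes form a one-parameter family. Matching this with the classification of indecomposable modules for a tame canonical algebra (the $\PP^{1}$-family of quasi-simples in homogeneous tubes with characteristic vector $\hh$ in the tame concealed case; the analogous tubular families indexed by the two-dimensional radical of $\chi_{\Lambda}$ in the tubular case) forces $M$ to be a quasi-simple module in a homogeneous tube of the tubular family whose characteristic dimension vector is $\dd$.

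Next I analyze the submodules. Since $M$ is a quasi-simple in a homogeneous tube, $M$ is slope-stable in the classical Ringel sense, so every proper non-zero submodule $M'\subsetneq M$ decomposes into indecomposable summands of strictly smaller slope (sitting in preprojectives or in tubular families strictly below $M$'s family in the Ringel partial order). The final step is the computation of $\theta_{\dd}(\ddim M')$. In the tubular case, Lemma \ref{iso-RR-lemma} evaluates
$$
\theta_{\dd}(\ddim M')=\tfrac{1}{m}\bigl(\rk_{\Lambda}(\dd)\ddeg_{\Lambda}(\ddim M')-\rk_{\Lambda}(\ddim M')\ddeg_{\Lambda}(\dd)\bigr),
$$
which will come out strictly negative on any proper non-zero $M'$ by the slope inequality. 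In the tame concealed case $\dd=\hh$, the identity $\langle\hh,\cdot\rangle_{\Lambda}=-\rk_{\Lambda}(\cdot)$ combined with $\rk_{\Lambda}(\ddim M')>0$ for the necessarily preprojective $M'\subsetneq M$ gives the same conclusion directly.

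The main obstacle will be the sign bookkeeping in the last step: checking that the Riemann-Roch expression is strictly negative uniformly for every proper non-zero submodule $M'$, across the various combinations of signs of $\rk_{\Lambda}(\dd)$ and $\ddeg_{\Lambda}(\dd)$. When $\rk_{\Lambda}(\dd)=0$ this reduces to the positivity $\ddeg_{\Lambda}(\dd)>0$ (immediate from $\ddeg(\hh)=m$ in the concealed case, and from the explicit description of isotropic Schur roots in the tubular case); otherwise, the classical slope inequality must be shown to translate into the correct sign via Riemann-Roch. A secondary subtle point is the identification of the generic $M$ as a quasi-simple in a \emph{homogeneous} tube rather than a Schur module of larger quasi-length in an exceptional tube, which follows from the one-parameter count of $\GL(\dd)$-orbits combined with Schurness.
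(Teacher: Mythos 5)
Your choice of witness module agrees with the paper's: an indecomposable $M$ with $\ddim M=\dd$ lying at the mouth of a homogeneous tube of the separating tubular family determined by $\dd$. The gap is in the decisive step, the verification of King's criterion. You reduce everything to showing $\theta_{\dd}(\ddim M')<0$ for \emph{every} proper non-zero submodule $M'\subset M$, and propose to do this by translating a slope inequality through the Riemann--Roch evaluation of Lemma \ref{iso-RR-lemma}; but you explicitly leave the resulting ``sign bookkeeping'' open, and that bookkeeping is the entire content of the lemma. It is genuinely delicate: isotropic Schur roots of a tubular canonical algebra need not be sincere (for $\underline m=(3,3,3)$, the isotropic Schur root of the $\widetilde{\mathbb E}_6$ star obtained by deleting the sink $0$ is an isotropic Schur root of $\Lambda$ with $\rk_{\Lambda}(\dd)=-3$ and $\ddeg_{\Lambda}(\dd)=0$); submodules of rank zero, or of rank opposite in sign to $\rk_{\Lambda}(\dd)$, do occur; and the determinant $\rk_{\Lambda}(\dd)\,\ddeg_{\Lambda}(\ddim M')-\rk_{\Lambda}(\ddim M')\,\ddeg_{\Lambda}(\dd)$ is certainly not negative for arbitrary dimension vectors below $\dd$ --- only for those which actually arise from submodules of $M$ --- so the argument cannot close on numerics alone and must use representation-theoretic input about which summands embed. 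Your clause that the indecomposable summands of $M'$ sit ``in preprojectives or in tubular families strictly below'' also silently excludes summands lying in $M$'s \emph{own} family, which needs an argument (pairwise orthogonality of the tubes in that family plus quasi-simplicity of $M$ force any such summand to be all of $M$).

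The paper circumvents all of this with two devices worth adopting. First, it uses the trisection $\module(\Lambda)=\widetilde{\mathcal P}\vee\mathcal T\vee\widetilde{\mathcal Q}$ cut out by the sign of $\theta_{\dd}$ \emph{itself} (the separating tubular family property), so ``smaller slope'' literally means $\theta_{\dd}<0$ and no Riemann--Roch computation is needed; semi-stability of every $\dd$-dimensional module in $\mathcal T$ is then immediate from $\Hom(\widetilde{\mathcal Q},\mathcal T)=0=\Hom(\mathcal T,\widetilde{\mathcal P})$. Second, instead of testing all submodules, it assumes $M$ is not stable and extracts from a Jordan--H\"older filtration in $\rep(\Lambda)^{ss}_{\theta_{\dd}}$ a proper non-zero $\theta_{\dd}$-\emph{stable} submodule $M'$; such an $M'$ satisfies $\theta_{\dd}(\ddim M')=0$, hence lies in $\mathcal T$, hence (by orthogonality of tubes) in $M$'s own homogeneous tube, hence has $\ddim M'\in\ZZ_{>0}\dd$, contradicting properness. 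This replaces your uniform sign estimate over all submodules by a single structural contradiction.
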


\begin{proof} From the general theory of tame concealed algebras and of tubular algebras (see for example \cite[Section 2]{BobSko}), we know that $\dd$ is the dimension vector of an indecomposable $\Lambda$-module lying at the mouth of a homogeneous tube which is part of a family, call it $\mathcal T$, of pairwise orthogonal tubes. Specifically, $\mathcal T$is the full subcategory of $\module(\Lambda)$ consisting of all $\Lambda$-modules that are direct sums of indecomposable $\Lambda$-modules $X$ such that $\theta_{\dd}(\ddim X)=0$. Moreover, let $\widetilde{\mathcal P}$ ($\widetilde{\mathcal Q}$, respectively) be the full subcategory of $\module(\Lambda)$ consisting of all $\Lambda$-modules that are direct sums of indecomposable $\Lambda$-modules $X$ such that $\theta_{\dd}(\ddim X)<0$ ($\theta_{\dd}(\ddim X)>0$, respectively). Then, we have:
\begin{enumerate}
\renewcommand{\theenumi}{\arabic{enumi}}
\item $\module(\Lambda)=\widetilde{\mathcal P} \bigvee  \mathcal T \bigvee  \widetilde{\mathcal Q}$;
\item $\Hom_{\Lambda}(X,Y)=0$ if either $X \in \widetilde{\mathcal Q}, Y \in \mathcal T$ or $X \in \mathcal T, Y \in \widetilde{\mathcal P}$.
\end{enumerate}

It is now clear that $\rep(\Lambda,\dd)^{ss}_{\theta_{\dd}}\neq \emptyset$ since any $\dd$-dimensional $\Lambda$-module from $\mathcal T$ is $\theta_{\dd}$-semi-stable. Let $M\in \rep(\Lambda,\dd)$ be an indecomposable module that lies in a homogeneous tube of $\mathcal T$. We are going to show that $M$ is $\theta_{\dd}$-stable. Assume to the contrary that $M$ is not $\theta_{\dd}$-stable and consider a Jordan-H{\"o}lder filtration of $M$ in $\rep(\Lambda)^{ss}_{\theta_{\dd}}$. This way, we can see that $M$ must have a proper $\theta_{\dd}$-stable submodule $M'$. Then, $M'$ must belong to the homogeneous tube of $M$, and from this we deduce that $\ddim M'$ is an integer multiple of $\dd$. But this is a contradiction.
\end{proof}

Now, we are ready to prove:


\begin{proposition} \label{exceptional-seq-tubular-prop} Let $\Lambda$ be a tame canonical algebra and let $\dd$ be an isotropic Schur root of $\Lambda$. Then, there exists an orthogonal exceptional sequence $\mathcal{E}=(E_1,E_2)$ of $\Lambda$-modules such that $\filt_{\mathcal{E}}(\dd)$ contains a Schur $\Lambda$-module and $\Lambda_{\mathcal E}$ is the path algebra of the Kronecker quiver $K_2$.
\end{proposition}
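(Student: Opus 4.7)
The plan is to produce $\mathcal{E}=(E_1,E_2)$ from a facet of the cone of effective weights $\Eff(C)$, where $C=\rep(\Lambda,\dd)$, mimicking the Euclidean-quiver argument of Corollary~\ref{rational-inv-Euclidean-coro} but now in the quivers-with-relations framework developed in Section~\ref{canonical-sec}. Since $q_{\Lambda}(\dd)=0$, Theorem~\ref{BS-GS-irr-comp-thm} gives that $C$ is irreducible, and Lemma~\ref{lemma-dim-Eff} places $\theta_{\dd}=\langle\dd,\cdot\rangle_{\Lambda}$ in the relative interior of $\Eff(C)$; in particular $\Eff(C)$ has positive dimension and (for the tame canonical $\Lambda$ under consideration) possesses proper facets.

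I would pick a facet $\mathcal{F}$ of $\Eff(C)$ together with a lattice weight $\theta_0\in\rel\mathcal{F}$. By Lemma~\ref{face-Eff-lemma}, the $\theta_0$-stable decomposition of $\dd$ in $C$ has the form $\dd=m_1\cdot\dd_1\pp\ldots\pp m_t\cdot\dd_t$ with the $\dd_i$ distinct. Since $\theta_0$ lies on the boundary of $\Eff(C)$, the dimension vector $\dd$ is not $\theta_0$-stable, so $t\geq 2$. The decisive step is to argue that \emph{some} facet yields $t=2$ with $m_1=m_2=1$ and $\dd_1,\dd_2$ both real Schur: every $\theta_0$-stable summand is Schur, hence by Theorem~\ref{BS-GS-irr-comp-thm} either real or isotropic, and the codimension-one condition on $\mathcal{F}$ combined with the constraint $\sum m_i\dd_i=\dd$ (in the radical of $\chi_{\Lambda}$ for the tubular case, or matching $\hh$ in the concealed case) should exclude all configurations except the two-term real-Schur one.

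Let $E_i$ be the unique exceptional $\Lambda$-module of dimension $\dd_i$ (Theorem~\ref{BS-GS-irr-comp-thm}). Standard properties of $\theta_0$-stable Jordan-H\"older filtrations in $\rep(\Lambda)_{\theta_0}^{ss}$ give, after possibly swapping labels, $\Hom_{\Lambda}(E_2,E_1)=0$ and $\Ext^{l}_{\Lambda}(E_1,E_2)=0$ for all $l\geq 0$. Since both $\dd_i$ are real, the isotropy of $\dd$ yields
\[
0=\langle\dd,\dd\rangle_{\Lambda}=1+1+\langle\dd_1,\dd_2\rangle_{\Lambda}+\langle\dd_2,\dd_1\rangle_{\Lambda}=2+\langle\dd_2,\dd_1\rangle_{\Lambda},
\]
so $\langle\dd_2,\dd_1\rangle_{\Lambda}=-2$. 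Verifying $\Ext^2_{\Lambda}(E_2,E_1)=0$ (via Lemma~\ref{iso-RR-lemma} in the tubular case and via the tame concealed structure otherwise) then gives $\dim_k\Ext^1_{\Lambda}(E_2,E_1)=2$, so $Q_{\mathcal{E}}=K_2$; the vanishing of all $\Ext^2$ terms makes the Maurer-Cartan ideal $\langle\Ima(m^*)\rangle$ trivial, giving $\Lambda_{\mathcal{E}}=kK_2$. Finally, $\filt_{\mathcal{E}}(\dd)$ contains a Schur $\Lambda$-module because the equivalence $F_{\mathcal{E}}$ of Theorem~\ref{A-infinity-functor-open-thm} carries the generic (Schur) representation of $K_2$ of dimension $(1,1)$ to a Schur module of dimension $\dd$.

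The principal obstacle I anticipate is the second step: pinning down a facet of $\Eff(C)$ whose stable decomposition has exactly two distinct real Schur summands of multiplicity one. The abstract codimension count alone leaves room for more intricate configurations (higher multiplicities, or isotropic summands different from $\dd$ itself), and excluding these will require exploiting the fine structure of indecomposables in a tame canonical algebra together with the Riemann-Roch input from Lemma~\ref{iso-RR-lemma}.
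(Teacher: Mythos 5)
Your overall strategy is exactly the paper's: pick a facet $\mathcal F$ of $\Eff(\rep(\Lambda,\dd))$, take $\theta_0$ in its relative interior, and extract $(E_1,E_2)$ from the $\theta_0$-stable decomposition of $\dd$. But the step you flag as "the principal obstacle" is precisely the content of the proof, and you have not carried it out. The paper closes it as follows. First, a linear-algebra argument on the codimension-one face (Lemma \ref{face-Eff-lemma} plus the fact that $\mathcal F$ lies in the orthogonal complement of the span of $\{\dd,\dd_1,\dd_2\}$) shows that for \emph{every} facet one has $\mathcal F=\Eff(\Lambda,\dd)\cap\mathbf H(\dd_1)\cap\mathbf H(\dd_2)$ and $\dd=n_1\dd_1+n_2\dd_2$; no search for a "good" facet is needed. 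Then a case analysis on $(q_\Lambda(\dd_1),q_\Lambda(\dd_2))$ rules out the configurations you worry about: if both are isotropic, Lemma \ref{iso-RR-lemma} puts $\dd_1$ in the radical of $\chi_\Lambda$, and the rank dichotomy forces $\langle\dd_1,\dd_2\rangle_\Lambda=\langle\dd_2,\dd_1\rangle_\Lambda=0$, whence $\dd_1=\dd_2$, a contradiction; if exactly one is isotropic, $q_\Lambda(\dd)=0$ forces $n_1=0$, again a contradiction. In the remaining case both are real, and the three relations $2n_1=n_2l$, $2n_2=n_1l$, $n_1^2+n_2^2=ln_1n_2$ (with $l=-\langle\dd_1,\dd_2\rangle_\Lambda-\langle\dd_2,\dd_1\rangle_\Lambda$) pin down $n_1=n_2=1$ and $l=2$. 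Your identity $0=2+\langle\dd_1,\dd_2\rangle_\Lambda+\langle\dd_2,\dd_1\rangle_\Lambda$ presupposes $n_1=n_2=1$, so it cannot be used before this is established.

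A second genuine error: you assert that "standard properties of $\theta_0$-stable Jordan--H\"older filtrations give $\Ext^l_\Lambda(E_1,E_2)=0$ for all $l\geq 0$." Only the Hom-vanishing between non-isomorphic $\theta_0$-stables is standard; the vanishing of $\Ext^1$ in one direction is not automatic and is where the paper works hardest. It requires observing that neither $\langle\dd_1,\dd_2\rangle_\Lambda$ nor $\langle\dd_2,\dd_1\rangle_\Lambda$ can equal $-1$ (otherwise $\theta_{\dd}(\ddim E_1)=0$, contradicting the $\theta_{\dd}$-stability of the generic module $M$ supplied by Lemma \ref{lemma-dim-Eff}), so that one of the two pairings is $0$ and the other is $-2$, and then converting these Euler-form values into actual $\Ext^1$-dimensions via the sign pattern of $\rk_\Lambda(\dd_1)$, $\rk_\Lambda(\dd_2)$ and the $\mathcal P\vee\mathcal R\vee\mathcal Q$ structure (which also supplies the $\Ext^2$-vanishing you invoke). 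Without these ingredients the orthogonality of $(E_1,E_2)$ and the identification $Q_{\mathcal E}=K_2$ are unproved.
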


\begin{proof} We know that $\rep(\Lambda,\dd)$ is irreducible by Theorem \ref{BS-GS-irr-comp-thm} and let us denote by $\Eff(\Lambda,\dd)$ the cone of effective weights of $\rep(\Lambda,\dd)$. It follows from Lemma \ref{lemma-dim-Eff} that $\dim \Eff(\Lambda,\dd)=|\Delta_0|-1$. Next, choose a facet $\mathcal{F}$ of the cone $\Eff(\Lambda,\dd)$ and a weight $\theta_0\in \rel \mathcal F \cap \ZZ^{\Delta_0}$. Now, consider the $\theta_0$-stable decomposition of $\dd$ in $\rep(\Lambda,\dd)$:
$$\dd=m_1\cdot \dd_1 \pp \ldots \pp m_t \cdot \dd_t,$$ with $m_1,\ldots, m_t$ positive integers and $\dd_i\neq \dd_j, \forall 1 \leq i \neq j \leq t$. Note that $\dd_1, \ldots, \dd_t$ are indivisible Schur roots by Theorem \ref{BS-GS-irr-comp-thm}.

For each $1 \leq i \leq t$, let $E_i$ be a $\dd_i$-dimensional $\theta_0$-stable module that arises as a factor of a Jordan-H{\"o}lder filtration of a generic module $M$ in $\rep(\Lambda,\dd)$. Note that we can choose $M$ to be $\theta_{\dd}$-stable by Lemma \ref{lemma-dim-Eff}. Furthermore, we have that $\Hom_{\Lambda}(E_i,E_j)=0, \forall 1 \leq i\neq j\leq t$ since $E_1, \ldots, E_t$ are pairwise non-isomorphic ($\theta_0$-)stable modules.

\underline{Claim:} $\mathcal F=\Eff(\Lambda,\dd)\cap \mathbf{H}(\dd_1)\cap \mathbf{H}(\dd_2)$ and $\dd=n_1\dd_1+n_2\dd_2$ for unique numbers $n_1$ and $n_2$.

\begin{proof}[Proof of Claim:] Note that $\mathcal{F}$ has dimension $|\Delta_0|-2$, and so $t \geq 2$. Moreover, the dimension of the subspace of $\RR^{\Delta_0}$ orthogonal to the subspace spanned by $\{\dd, \dd_1,\dd_2\}$ is at least $\Delta_0-2$ since it contains $\mathcal F$. In particular, the set $\{\dd,\dd_1,\dd_2\}$ is linearly dependent. Since $\dd_1$ and $\dd_2$ are distinct indivisible vectors, we deduce that $\dd=n_1\dd_1+n_2\dd_2$ for unique numbers $n_1$ and $n_2$.

When $t=2$, the proof follows from Lemma \ref{face-Eff-lemma}. Now, let us assume that $t \geq 3$. Arguing as before, we deduce that $\dd$ is a linear combination of $\dd_i$ and $\dd_1$, and $\dd$ is also a linear combination of $\dd_i$ and $\dd_2$ for all $3 \leq i \leq t$. So, $\dd_i$ is a linear combination of  $\dd_1$ and $\dd_2$ for all $i$, and this implies that $\mathbf H(\dd_1) \cap \mathbf H(\dd_2)=\bigcap_{i=1}^t \mathbf H(\dd_i)$. The proof of the claim now follows again from Lemma \ref{face-Eff-lemma}.
\end{proof}

There are three possible cases that we need to distinguish:

\underline{Case 1:} $q_{\Lambda}(\dd_1)=q_{\Lambda}(\dd_2)=0$. First note that $\langle \dd_1,\dd_2 \rangle_{\Lambda}+\langle \dd_2,\dd_1 \rangle_{\Lambda}=0$ since $\dd_1$ is in the radical of $\chi_{\Lambda}$ by Lemma \ref{iso-RR-lemma}.

If $\rk_{\Lambda} (\dd_1) \cdot \rk_{\Lambda}(\dd_2)\geq 0$ then $\langle \dd_i,\dd_j\rangle_{\Lambda}=-\dim_k \Ext^1_{\Lambda}(E_i,E_j), \forall 1 \leq i\neq j\leq 2$.  Consequently, $\langle \dd_1,\dd_2\rangle_{\Lambda}=\langle \dd_2,\dd_1 \rangle_{\Lambda}=0$. But then the two isotropic roots $\dd_1$ and $\dd_2$ would have to be multiple of each other. So, $\dd_1=\dd_2$ which is a contradiction.

If $\rk_{\Lambda}(\dd_1)>0>\rk_{\Lambda}(\dd_2)$ or $\rk_{\Lambda}(\dd_2)>0>\rk_{\Lambda}(\dd_1)$ then either $\langle \dd_1,\dd_2\rangle_{\Lambda}=0$ or $\langle \dd_2,\dd_1\rangle_{\Lambda}=0$. Since we are in the isotropic case, this would again imply that $\dd_1=\dd_2$. But this is a contradiction.

\underline{Case 2:} $q_{\Lambda}(\dd_1)=1, q_{\Lambda}(\dd_2)=0$ (or the other way around). Using the claim above and the fact that $q_{\Lambda}(\dd)=0$, we deduce that $n_1^2=n_1n_2(-\langle \dd_1,\dd_2\rangle_{\Lambda}-\langle \dd_2,\dd_1\rangle_{\Lambda})$. This relation combined with the fact that $\dd_2$ is in the radical of $\chi_{\Lambda}$ implies that $n_1=0$, which is a contradiction.

\underline{Case 3:} $q_{\Lambda}(\dd_1)=1, q_{\Lambda}(\dd_2)=1$. In this case, both $E_1$ and $E_2$ are exceptional $\Lambda$-modules. To simplify notation, set $l=-\langle \dd_1,\dd_2\rangle_{\Lambda}-\langle \dd_2,\dd_1\rangle_{\Lambda}$. Then, using the fact that $\dd$ is an isotropic root in the radical of $\chi_{\Lambda}$, we deduce that $2n_1=n_2l, 2n_2=n_1l$, and $n_1^2+n_2^2=ln_1n_2$ . It is now easy to see that $n_1=n_2=1$ and $l=2$. Without loss of generality, we can assume that $E_1$ is a submodule of $M$ and $E_2=M/E_1$. Then, we have that $\dim_k \Ext^1_{\Lambda}(E_2,E_1)>0$.

We also note that $\langle \dd_1,\dd_2 \rangle_{\Lambda} \neq -1$ and $\langle \dd_2,\dd_1 \rangle_{\Lambda} \neq -1$ since otherwise both of these two inner products would have to be $-1$, and this would imply that $\theta_{\dd}(\ddim E_1)=0$. But this would contradict the fact that $M$ is $\theta_{\dd}$-stable.

\hspace{15pt}\underline{Case 3.1:} $\rk_{\Lambda}(\dd_1)\cdot \rk_{\Lambda}(\dd_2)>0$.  In this case, we have that $\langle \dd_1,\dd_2 \rangle_{\Lambda}=-\dim_k \Ext^1_{\Lambda}(E_1,E_2)$ and $\langle \dd_2,\dd_1 \rangle_{\Lambda}=-\dim_k \Ext^1_{\Lambda}(E_2,E_1)$. It now follows that $(E_1,E_2)$ is an orthogonal exceptional sequence with the desired properties.

\hspace{15pt}\underline{Case 3.2:} $\rk_{\Lambda}(\dd_1)\cdot \rk_{\Lambda}(\dd_2)\leq 0$. First, note that $\rk_{\Lambda}(\dd_1)$  and $\rk_{\Lambda}(\dd_2)$ can not be both zero since otherwise $\rk_{\Lambda}(\dd)$ would be zero, and this would imply that $\dd=\hh$. But, then $\theta_{\dd}(\ddim E_1)=0$ which is not possible.

It is now easy to see that properties (ii)-(iii) mentioned at the beginning of this section imply that $(E_1,E_2)$ is an orthogonal exceptional sequence with the desired properties.
\end{proof}

Now, we are ready to prove Theorem \ref{reptype-canonical-thm}.

\begin{proof}[Proof of Theorem \ref{reptype-canonical-thm}] The implication $(2) \Longrightarrow (1)$ has been proved in Proposition \ref{qt-ssc-prop}.

Now, let us assume that $\Lambda$ is a tame canonical algebra and let $\dd$ be a generic root of $\Lambda$. We know from Theorem \ref{BS-GS-irr-comp-thm} that $\dd$ is a Schur root and  $\rep(\Lambda,\dd)$ has a unique indecomposable irreducible component, call it $C$.

If $q_{\Lambda}(\dd)=1$ then $k(C)^{\GL(\dd)}\simeq k$ since $C$ is an orbit closure in this case.

It remains to look into the case when $\dd$ is an isotropic Schur root of $\Lambda$. It follows from Proposition \ref{exceptional-seq-tubular-prop} that there exists an orthogonal exceptional sequence $\mathcal E=(E_1,E_2)$ such that $C\cap  \filt_{\dd}(\mathcal E) \neq \emptyset$ and $\Lambda_{\mathcal E}$ is the path algebra of $K_2$. Now, applying the reduction Theorem \ref{rational-inv-quiverel-thm}, we conclude that $k(C)^{\GL(\dd)}\simeq k(\rep(K_2,(1,1)))^{\GL((1,1))}\simeq k(t)$.
\end{proof}

Finally, let us prove Proposition \ref{ratio-inv-tame-can-prop}:

\begin{proof}[Proof of Proposition \ref{ratio-inv-tame-can-prop}] We know from Theorem \ref{reptype-quivers-thm} and Theorem \ref{reptype-canonical-thm} that if $C$ is an indecomposable irreducible component of $\rep(\Lambda,\dd)$ then $S^m(k(C)^{\GL(\dd)})$ is isomorphic to either $k$ in case $\dd$ is a real Schur root or to $k(t_1,\ldots, t_m)$ in case $\dd$ is isotropic. The proof now follows from Proposition \ref{rational-inv-generic-decomp-prop}.
\end{proof}

\begin{remark} In a sequel to the current work, we plan to use a similar strategy to prove the analogous of Theorem \ref{reptype-canonical-thm} for other classes of algebras, including the class of quasi-tilted algebras and of string algebras. Of course, the ultimate goal is to prove Theorem \ref{reptype-canonical-thm} for arbitrary tame algebras. Since it is believed that the representation theory of tame algebras can be reduced to that of tame strongly simply-connected algebras via covering functors, the next natural steps are: (1) to solve the rationality problem for tame strongly simply-connected algebras; (2) to show that the rationality of fields of rational invariants is preserved under covering functors (in the relevant cases). We plan to address all these problems in future work in which the reduction Theorem \ref{rational-inv-quiverel-thm} combined with the systematic approach to finding short orthogonal exceptional sequences from Section \ref{exceptional-effective-wts-sec} will play a fundamental role. 
\end{remark}

\section*{Acknowledgment} I would like to thank Frauke Bleher, Grzegorz Bobi{\'n}ski, Harm Derksen, Christof Geiss, Dirk Kusin, Hagen Meltzer, Steven Sam, and Jerzy Weyman for helpful conversations on the subject of the paper.


\end{document}